\newcommand\HERE{%
  \par\bigskip\noindent\hfill \hfill\pgfornament[width=.1\textwidth]{7}\hfill\pgfornament[width=.1\textwidth]{7}\hfill\pgfornament[width=.1\textwidth]{7}\hfill\hfill\null\par\bigskip
}
\newcommand{\HH}{\mathbb{H}}
\newcommand{\R}{\mathbb{R}}
\newcommand{\cF}{\mathcal{F}}
\newcommand{\cS}{\mathcal{S}}
\renewcommand{\epsilon}{\varepsilon}
\renewcommand{\approx}{\asymp}
\newcommand{\ud}[0]{\,\mathrm{d}}
\newcommand{\ovln}[1]{#1}
\DeclareMathOperator{\Exc}{Exc}
\DeclareMathOperator{\BV}{BV}
\DeclareMathOperator{\spanop}{span}
\newcommand{\rstr}{\ensuremath{\lfloor}}
\def\resetMathstrut@{%
  \setbox\z@\hbox{%
    \mathchardef\@tempa\mathcode`\(\relax
    \def\@tempb##1"##2##3{\the\textfont"##3\char"}%
    \expandafter\@tempb\meaning\@tempa \relax
  }%
  \ht\Mathstrutbox@1.2\ht\z@ \dp\Mathstrutbox@1.2\dp\z@
}
\newcommand{\one}{\mathbf{1}}
\newcommand{\from}{\colon}
\newcommand{\symdiff}{\mathbin{\triangle}}
\renewcommand{\mid}{:}
\newcommand{\eqdef}{\stackrel{\mathrm{def}}{=}}
\renewcommand{\ge}{\geqslant}
\renewcommand{\le}{\leqslant}
\renewcommand{\setminus}{\smallsetminus}
\DeclareMathOperator{\sign}{sign}
\DeclareMathOperator{\supp}{supp}
\DeclareMathOperator{\id}{id}
\DeclareMathOperator{\Per}{Per}
\DeclareMathOperator{\Lip}{Lip}
\let\div\relax
\DeclareMathOperator{\div}{div}
\newtheorem{thm}{Theorem}[section]
\newtheorem{lemma}[thm]{Lemma}
\newtheorem{prop}[thm]{Proposition}
\newtheorem{cor}[thm]{Corollary}
\theoremstyle{remark}
\newtheorem{defn}[thm]{Definition}
\newtheorem{remark}[thm]{Remark}
\newtheorem{example}[thm]{Example}
\newtheorem{question}[thm]{Question}
\newcommand{\pd}[2]{\partial_{#2} #1}
\title{Harmonic intrinsic graphs in the Heisenberg group}
\author{Robert Young}
\address{New York University, Courant Institute of Mathematical Sciences, 251 Mercer Street, New York, NY 10012, USA.}
\email{ryoung@cims.nyu.edu}
\thanks{
  This material is based upon work supported by the National Science Foundation under Grant Nos.\ 2005609 and 1926686 and research done while the author was a visiting member at the Institute of Advanced Study.   
}
\begin{document}
\begin{abstract}
  Minimal surfaces in $\R^n$ can be locally approximated by graphs of harmonic functions, i.e., functions that are critical points of the Dirichlet energy, but no analogous theorem is known for $H$--minimal surfaces in the three-dimensional Heisenberg group $\HH$, which are known to have singularities. In this paper, we introduce a definition of intrinsic Dirichlet energy for surfaces in $\HH$ and study the critical points of this energy, which we call contact harmonic graphs. Nearly flat regions of $H$--minimal surfaces can often be approximated by such graphs. We give a calibration condition for an intrinsic Lipschitz graph to be energy-minimizing, construct energy-minimizing graphs with a variety of singularities, and prove a first variation formula for the energy of intrinsic Lipschitz graphs and piecewise smooth intrinsic graphs. 
\end{abstract}
\maketitle

\section{Introduction}  

Minimal surfaces in $\R^3$ are smooth, but the analogous $H$--minimal surfaces in the three-dimensional Heisenberg group need not be. Recall that an $H$--minimal surface is a stationary point of the Heisenberg area functional, which is proportional to the $3$--dimensional spherical Hausdorff measure. There are many examples of $H$--minimal surfaces with a singularity along a curve \cite{PaulsHMinimal}, as shown in Figure~\ref{fig:introMinimal}. 

\begin{figure} \begin{centering}\includegraphics[width=.5\textwidth]{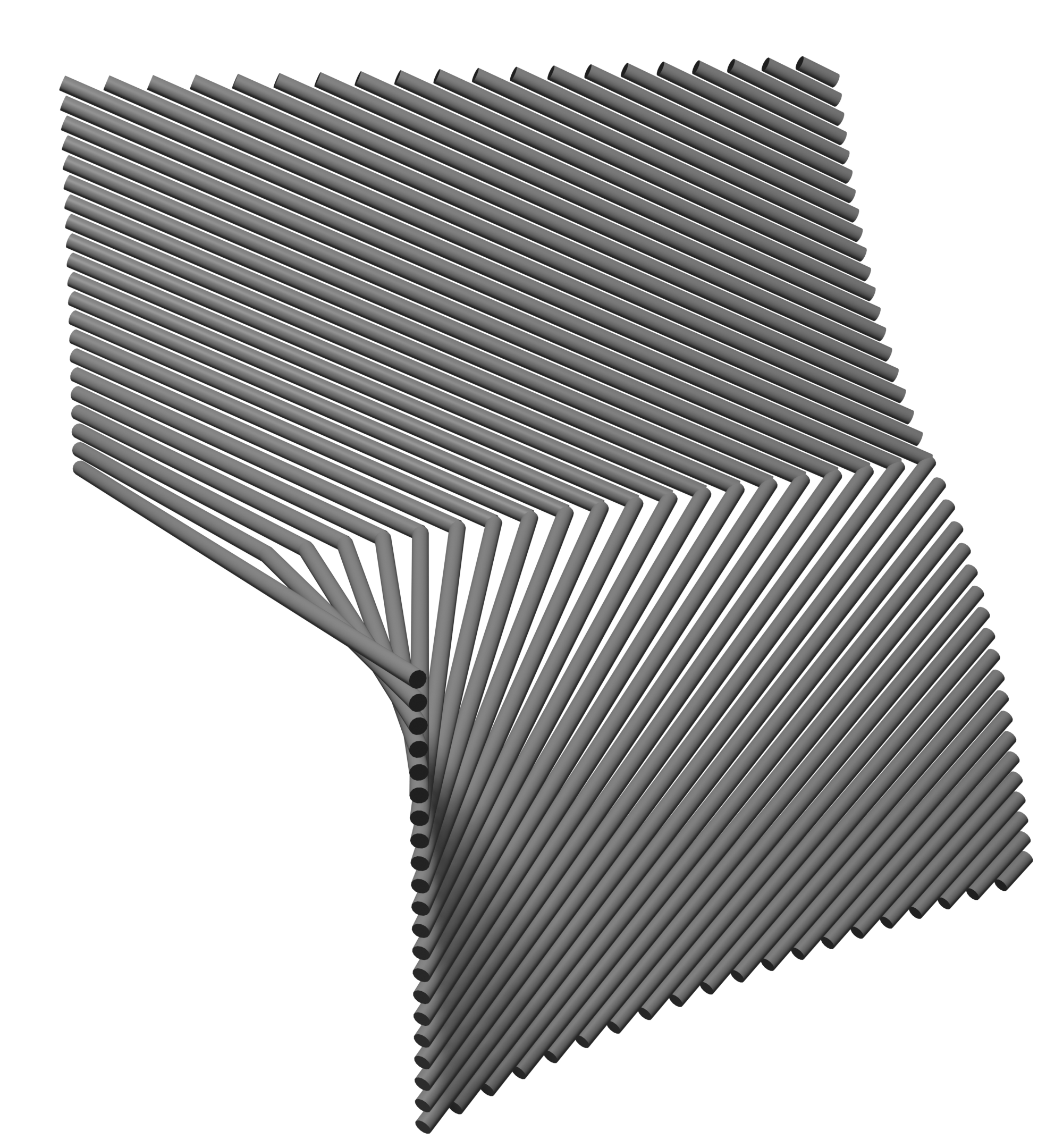}%
    \includegraphics[width=.5\textwidth]{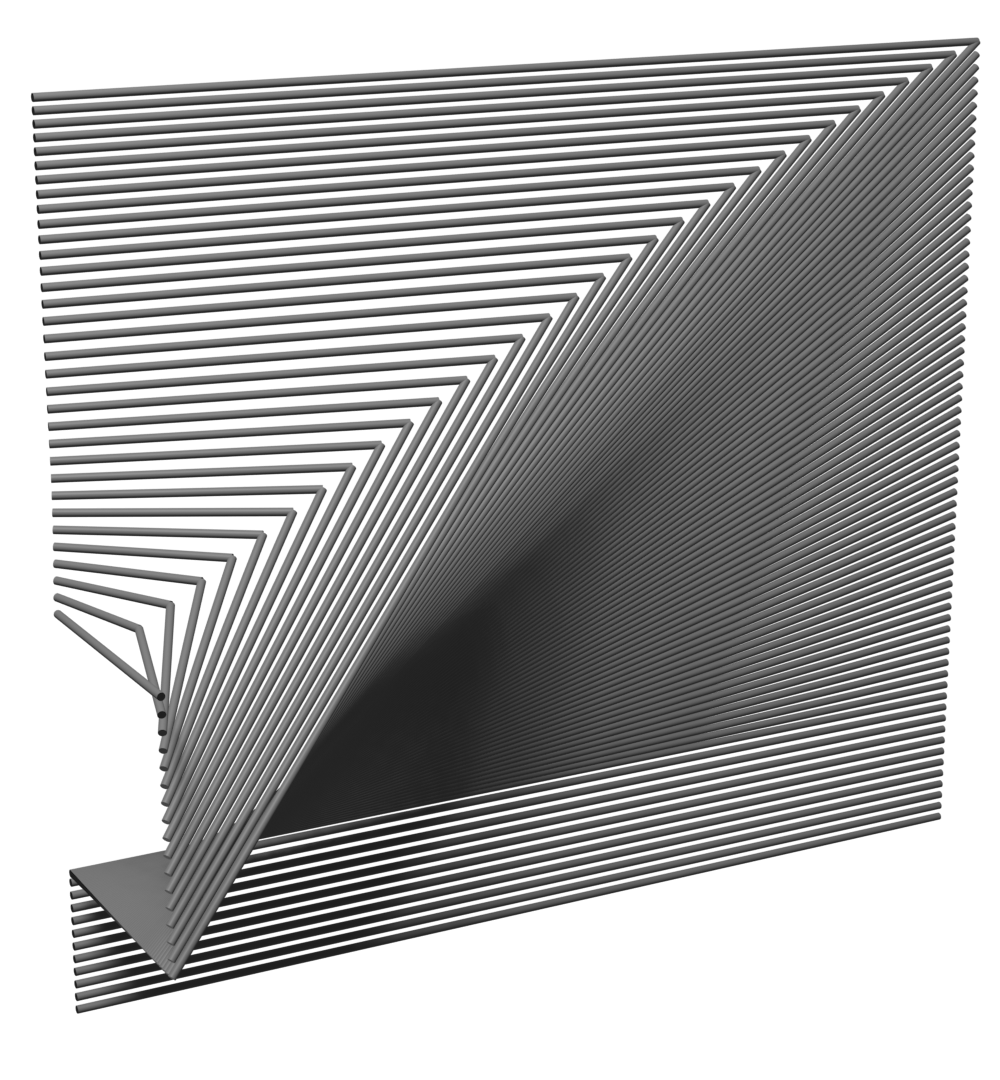}
  \end{centering}
  \caption{\label{fig:introMinimal}
    Examples of $H$--minimal and contact harmonic graphs with singularities. In both cases, the surfaces consist of horizontal line segments which intersect only along the characteristic nexus. $3$D models can be found in the supplementary materials.}
\end{figure}

\begin{figure} 
  \begin{centering}
    \includegraphics[width=.5\textwidth]{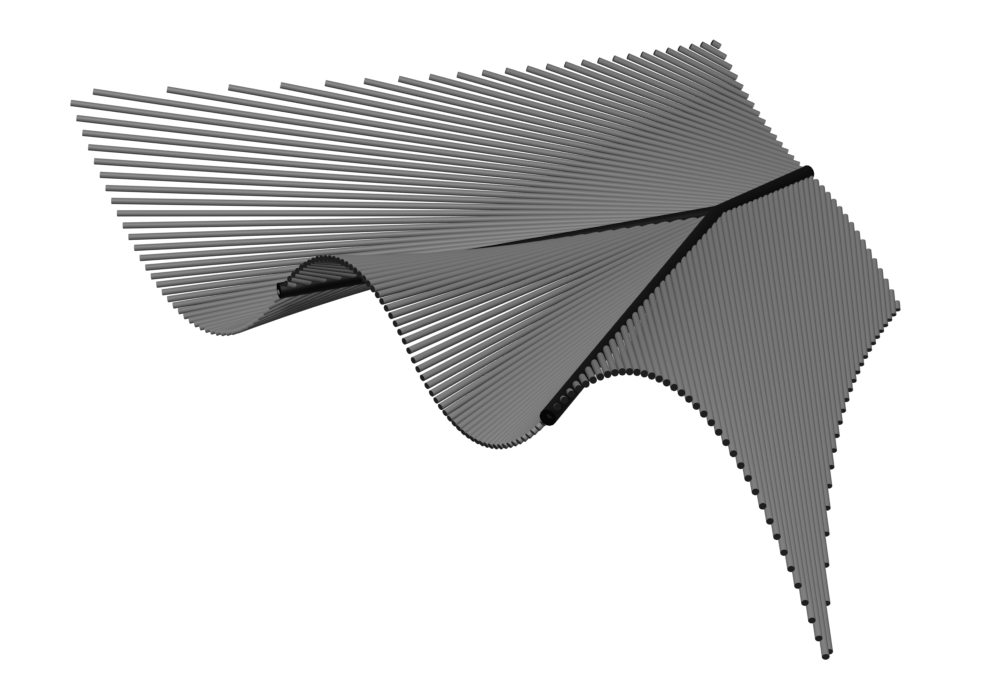}%
    \includegraphics[width=.5\textwidth]{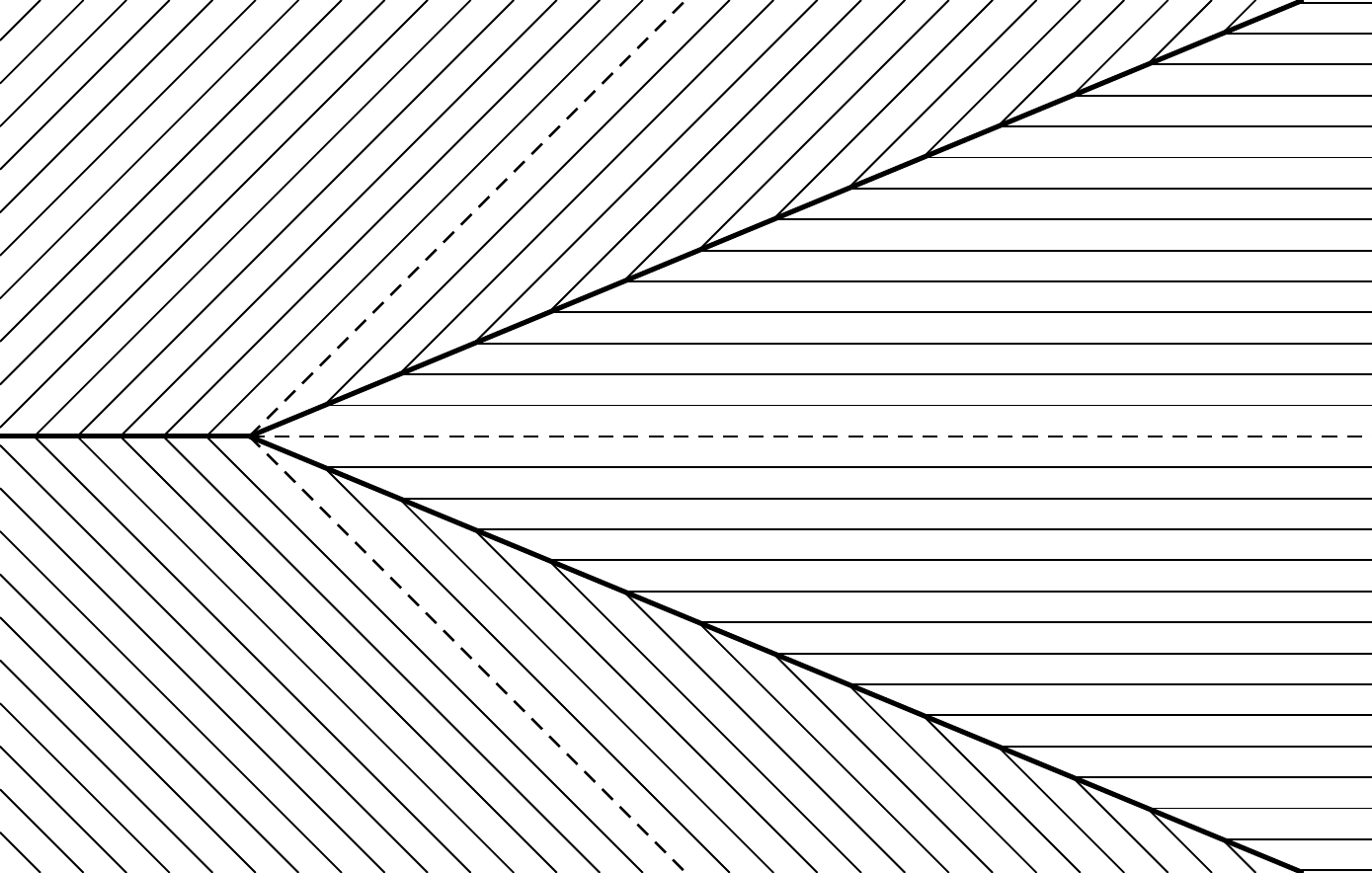}
  \end{centering}
  \caption{\label{fig:branch}
    An $H$--minimal graph with branched characteristic nexus (left) and the foliation of $\Sigma$ by horizontal curves, projected to the $xy$--plane (right). Dashed lines mark the boundaries between the pieces of herringbone surfaces (see Lemma~\ref{lem:herringbone}) that make up $\Sigma$ and thick lines mark the characteristic nexus. A $3$D model of the surface can be found in the supplementary materials.
  }
\end{figure}

In fact, $H$--minimal surfaces can have singularities in regions that are close to flat. The surfaces in Figure~\ref{fig:introMinimal}, for example, grow closer to planes as one moves from left to right. Subtler examples come from a construction of Nicolussi Golo and Ritoré \cite{GR2020}.
\begin{thm}[\cite{GR2020}]\label{thm:constructArea}
  Let $S^1$ be the unit circle of horizontal vectors in $\HH$.
  Given a collection (finite or countably infinite) of disjoint nonempty open intervals $I_1,I_2,\dots\subset S^1$ such that $\ell(I_i)<2\pi$ for all $i$, let $K=S^1\setminus \bigcup_i I_i$ be its complement.  Let $I_i=(a_i,b_i)$ and let $m_i\in S^1$ be the midpoint of $I_i$.  Let $\Sigma_K$ be the surface consisting of the union of:
  \begin{itemize}
  \item for each $k\in K$, a horizontal ray from the origin in the direction of $k$,
  \item for each $i$, a horizontal ray $R_i$ from the origin in the direction of $m_i$, and
  \item for each $p\in R_i$, a pair of horizontal rays from $p$ in the directions of $a_i$ and $b_i$.
  \end{itemize}
  Then $\Sigma_K$ is a scale-invariant area-minimizing surface.
\end{thm}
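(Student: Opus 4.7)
The plan is to prove area-minimality via a calibration argument adapted to the branched surface $\Sigma_K$. Because $\Sigma_K$ has branch points along each central ray $R_i$, the calibration is best thought of as a horizontal unit vector field $N$ whose role is to bound area from below against competitors through the standard estimate $\mathrm{Area}_H(\Sigma') \geq \int_{\Sigma'} \langle N, \nu \rangle\, dA \geq \int_{\Sigma_K} dA$, valid for any competitor $\Sigma'$ with the same boundary provided $N$ has unit horizontal length, agrees with $\nu_{\Sigma_K}$ on the regular part, and is horizontally divergence-free in the distributional sense on $\HH$.

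The construction of $N$ proceeds from the characteristic foliation of $\Sigma_K$. Parametrize horizontal unit directions by $k(\theta) = (\cos\theta, \sin\theta)$. Away from the characteristic nexus, $\Sigma_K$ is smooth and ruled by horizontal segments: in the $K$-sector the foliating segment through a generic point is the radial ray in direction $k(\theta)$, while in each wedge over $I_i$ it is a segment in the constant direction $a_i$ on one side of $R_i$ and $b_i$ on the other. The horizontal unit normal $\nu_{\Sigma_K}$ is the $\pi/2$-rotate of the foliation direction, hence piecewise constant in angular coordinates. I would extend $\nu_{\Sigma_K}$ to $N$ on all of $\HH$ by propagating it along the foliating rays; since these rays sweep out a $3$-dimensional region in each sector, this yields a well-defined piecewise constant field off the nexus.

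The crux is to verify that $N$ is divergence-free distributionally. On the smooth interior of each sector this is automatic because $N$ is constant in adapted coordinates. The nontrivial check is across the interfaces above the central rays $R_i$ and above the boundary rays of $K$: above $R_i$ the two adjacent wedges carry the constant vectors $a_i^\perp$ and $b_i^\perp$, and because $m_i$ bisects the arc from $a_i$ to $b_i$ these vectors are mirror images across the vertical plane over $R_i$, so their components normal to the interface cancel and no distributional divergence is produced; a parallel argument using continuity of $k(\theta)^\perp$ in $\theta$ handles the $K$-interfaces. I expect the main obstacle to be controlling the extension near the characteristic nexus itself and at the origin, where several sectors meet; here I would exploit the Heisenberg dilation invariance of $\Sigma_K$ to reduce the whole verification to a fixed spherical cross-section, and use a dimension count to argue that the one-dimensional nexus is too small to support nontrivial distributional divergence of a bounded horizontal vector field.
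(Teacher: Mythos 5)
The paper cites \cite{GR2020} for this theorem and does not reprove it, so there is no internal proof to compare against; your calibration strategy is, however, precisely the method of \cite{GR2020} and parallels the paper's own proof of the energy-minimizing analogue (Theorem~\ref{thm:constructEnergy}, via Propositions~\ref{prop:calibrations} and~\ref{prop:GR conserv}).

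Two points need repair. First, the interfacial divergence check is stated backward. If $a_i^\perp$ and $b_i^\perp$ were mirror images across the vertical half-plane over $R_i$, their components along the interface normal $m_i^\perp$ would have \emph{opposite} sign, which would produce a jump and hence a nonzero distributional divergence. What the arc-bisection hypothesis actually secures is that $a_i^\perp$ and $b_i^\perp$ are reflections across $m_i^\perp$ (the normal to the interface, not the interface itself), so their normal components \emph{agree}: writing $\theta_a,\theta_m,\theta_b$ for the angles, one has $\langle a_i^\perp, m_i^\perp\rangle = \cos(\theta_a-\theta_m) = \cos(\theta_b-\theta_m) = \langle b_i^\perp, m_i^\perp\rangle$ precisely because $\theta_m = \tfrac12(\theta_a+\theta_b)$. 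Equivalently, $a_i^\perp - b_i^\perp$ is tangent to the interface. This is the area analogue of condition~\eqref{eq:tau conserv}, i.e.\ the compatibility hypothesis of Proposition~\ref{prop:GR conserv}; the conclusion you draw is right, but the stated mechanism is the wrong one. Second, when $\{I_i\}$ is countably infinite the angular sectors need not form a finite-perimeter partition --- for instance, if $K$ is a fat Cantor set, the cone over $K$ does not have locally finite perimeter --- so the gluing lemma cannot be applied in one step. The paper meets exactly this difficulty in the proof of Theorem~\ref{thm:constructEnergy} by truncating to $K_n = S^1 \setminus (I_1\cup\cdots\cup I_n)$, verifying conservativity for each finite $n$, and passing to the uniform limit; your argument needs the same step.

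Your closing dilation-invariance and dimension-count remarks are not wrong but are superfluous. Once $N$ is set up as a piecewise-$C^1$ field associated to a finite-perimeter partition of $\HH$ --- most cleanly taken constant along vertical lines, using that $\Sigma_K$ is a $Z$-graph, which is a sharper version of your ``propagate along foliating rays'' --- the only places distributional divergence can concentrate are the interfaces, which are $\cS^3$-rectifiable vertical half-planes on which the jump condition is verified $\cS^3$-a.e. The characteristic nexus is a one-dimensional, $\cS^3$-null subset of those half-planes and the origin is a single point; neither requires a separate argument.
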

Figure~\ref{fig:branch} shows an example of such a surface; here, the rays $R_i$ are the thick lines and $K$ consists of the three points in $S^1$ corresponding to the dashed lines.

If we bring these three points close together, $\Sigma_K$ converges to a vertical plane; one can show (see Proposition~\ref{prop:stretchLimits}) that if $K=\{-\epsilon, 0, \epsilon\}$, then $\Sigma_K$ converges to the $xz$--plane as $\epsilon\to 0$. That is, $\Sigma_K$ can be arbitrarily close to the $xz$--plane in $\HH$ and still have a branched singularity.

In fact, there are examples with worse behavior. Suppose that $K$ is a Cantor set with positive measure and let $\chi=\bigcup R_i$. Since $K$ is a Cantor set, the closure $\overline{\chi}$ contains the cone over $K$, which has positive measure. The rectifiability of $\Sigma_K$ implies that almost every point in $\overline{\chi}$ has a vertical approximate tangent plane; indeed, this holds for every point in $\overline{\chi}\setminus \chi$. If $p\in \overline{\chi}\setminus \chi$, then 
$\Sigma_K$ is flat near $p$ in the sense that the neighborhoods $\Sigma_K\cap B(p,\epsilon)$ are close to a vertical plane for sufficiently small $\epsilon$, but singular in the sense that $\Sigma_K\cap B(p,\epsilon)$ is nonsmooth for all $\epsilon$ and the unit horizontal normal to $\Sigma_K\cap B(p,\epsilon)$ is discontinuous.

This does not happen in $\R^n$. A key step in proving the regularity of codimension--$1$ minimal surfaces in $\R^n$ is to show that if $E$ is a perimeter-minimizing subset of $\R^n$ and $p$ lies in the reduced boundary of $E$, then $\partial E$ can be approximated by graphs of harmonic functions on small balls around $p$. One can then use the regularity of harmonic functions to show that  $\partial E$ is smooth on sufficiently small balls around $p$; see for instance \cite{Maggi} for an exposition.

In this paper, we propose a method to study singular points in $H$--minimal surfaces by using an analogue of harmonic functions that we call \emph{contact harmonic graphs}. Many of these graphs arise as limits of $H$--minimal surfaces under rescaling and stretching, and we conjecture that, as in $\R^n$, one can use harmonic graphs to approximate sufficiently flat regions of $H$--minimal surfaces. A harmonic graph is a critical point of the \emph{intrinsic Dirichlet energy} (see \eqref{eq:ide intro}); we describe a calibration method to prove that a graph is energy-minimizing and use calibrations to construct several examples. In addition, we prove a first variation formula for the intrinsic Dirichlet energy that lets us characterize intrinsic Lipschitz harmonic graphs.

Before we state our results, we set some notation. We define the Heisenberg group $\HH$ to be the set $\R^3$ equipped with the product
$$(x,y,z)\cdot(x',y',z')=\left(x+x',y+y',z+z'+\frac{xy'-yx'}{2}\right).$$
Let $x,y,z\from \HH\to \R$ be the coordinate functions and let $X,Y, Z$ be the coordinate vectors. The one-parameter subgroups of $\HH$ generated by these vectors are the coordinate axes, so we write elements of these one-parameter subgroups as $X^t=(t,0,0)$, $Y^t=(0,t,0)$, and $Z^t=(0,0,t)$, for $t\in \R$.  Let $V_0=\{(x,y,z)\in \HH\mid y=0\}$ be the $xz$--plane.  For any $f\from V_0\to \R$, we define the intrinsic graph of $f$ to be
$$\Gamma_f=\{v\cdot Y^{f(v)}\mid v\in V_0\}$$
and define $\Psi_f\from V_0\to \Gamma_f$ by $\Psi_f(v)=v\cdot Y^{f(v)}$.  Let $\mu$ be Lebesgue measure on $V_0$.

When $f$ is an \emph{intrinsic Lipschitz function} (see Definition~\ref{def:intrinsic lipschitz graph}), $\Gamma_f$ has approximate tangent planes with respect to the Carnot--Carathéodory metric almost everywhere \cite{FSSCDiff} and these planes are vertical (parallel to the $z$--axis).  Each tangent plane projects to a line in the $xy$--plane with slope given by a nonlinear differential operator, the \emph{intrinsic gradient}.  When $f$ is smooth, the intrinsic gradient of $f$ is given by 
$$\nabla_f f = (\partial_x - f\partial_z)f;$$
when $f$ is not smooth, we define $\nabla_ff$ in the distributional sense (see Definition~\ref{def:distrib gradient}).

Given a bounded open subset $U\subset V_0$ and an intrinsic graph $\Gamma\subset \HH$, we define the \emph{intrinsic Dirichlet energy} of $\Gamma$ on $U$ by
\begin{equation}\label{eq:ide intro}
  E_U(\Gamma)=\frac{1}{2}\int_{U} |\nabla_f f|^2 \ud \mu,
\end{equation}
where $\mu$ is Lebesgue measure on $V_0$.  

By \cite[Thm.\ 1.6]{CMPSC}, the spherical Hausdorff measure\footnote{Unless otherwise specified, distances and Hausdorff measures in $\HH$ will be with respect to the Carnot--Carathèodory metric.} of an intrinsic Lipschitz graph $\Gamma=\Gamma_f$ of $f\from U\to \R$ can be written
\begin{equation}\label{eq:energy area coarse}
  \cS^3(\Gamma) = \int_U \sqrt{1+|\nabla_ff|^2}\ud \mu.
\end{equation}
In fact, the formula includes a multiplicative constant, but throughout this paper, we will normalize $\cS^3$ so that \eqref{eq:energy area coarse} holds without a constant.
When $\nabla_ff$ is small, 
\begin{equation}\label{eq:energy area Taylor}
  \cS^3(\Gamma)   = \int_U 1+\frac{1}{2} |\nabla_ff|^2 + O(|\nabla_ff|^3) \ud \mu = \mu(U)+E_U(\Gamma)+ O(\mu(U)\|\nabla_ff\|_\infty^3),
\end{equation}
so when $\nabla_ff$ is small, the intrinsic Dirichlet energy of $\Gamma$ is closely linked to its area.

This suggests that an area-minimizing surface with small $\nabla_ff$ should be close to energy-minimizing. We can formalize this notion using stretch automorphisms. For $r>0$, the map $s=s_{r^{-1},r}(x,y,z)=(r^{-1}x,ry,z)$ is an automorphism of $\HH$ that sends intrinsic graphs to intrinsic graphs and multiplies $\nabla_ff$ by $r^2$. Since its Jacobian on $V_0$ is $r^{-1}$, $E_{s(U)}(s(\Gamma))=r^{3}E_U(\Gamma)$. One can use this to construct energy-minimizing surfaces; if $\Gamma_1,\Gamma_2,\dots$, are area-minimizing surfaces and $\Lambda$ is such that $s_{i^{-1},i}(\Gamma_i)=\Lambda$ for all $i$, then $\Lambda$ is energy-minimizing. This shows, for instance, that if $f(x,z)=-a \sqrt{z} \sign(z)$ for some $a\ne 0$, then $\Gamma_{f}$ is energy-minimizing (Lemma~\ref{lem:herringbone}). These surfaces were studied in \cite{PaulsHMinimal, ChengHwangYang, MSCVnegative}; we call them \emph{herringbone surfaces}, since they can be written as the union of horizontal rays that branch out from the $x$--axis in a herringbone pattern.

The area-minimizing cones constructed in \cite{GR2020} also have energy-minimizing analogues. As in \cite{GR2020}, these surfaces can be written as a union of horizontal rays that either end at the origin or branch off of the singular set; the renderings in the figures are made up of these rays. The \emph{slope} of a horizontal ray is defined as the slope of its projection to the $xy$--plane; a horizontal ray is \emph{positive} or \emph{negative} depending on whether it points in the $+x$-- or $-x$--direction.
\begin{thm}\label{thm:constructEnergy}
  Let $\alpha>0$. Let $I_1,I_2,\dots\subset [-\alpha, \alpha]$ be a collection of disjoint nonempty open intervals, and let $K=[-\alpha, \alpha]\setminus \bigcup I_i$. Let $I_i=(a_i,b_i)$ and let $m_i=\frac{a_i+b_i}{2}$. Let $\Lambda_{K}\subset \HH$ be the surface consisting of the union of:
  \begin{enumerate}
  \item the negative $x$--axis, denoted $R_0$,
  \item for each $k\in K$, a positive horizontal ray from the origin with slope $k$,
  \item for each $i\ge 1$, a positive horizontal ray $R_i$ from the origin with slope $m_i$, and
  \item for each $p\in R_0$, a pair of positive horizontal rays from $p$ with slopes $\alpha$ and $-\alpha$.
  \item for each $p\in R_i, i\ge 1$, a pair of positive horizontal rays from $p$ with slopes $a_i$ and $b_i$.
  \end{enumerate}
  Then $\Lambda_{K}$ is a scale-invariant energy-minimizing surface.
\end{thm}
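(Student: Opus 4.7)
The plan is to realize $\Lambda_K$ as the limit of stretch-images of area-minimizing cones produced by Theorem~\ref{thm:constructArea}, and then to transfer area-minimality to energy-minimality via the area-energy expansion \eqref{eq:energy area Taylor}, exactly as sketched in the paragraph preceding the theorem. Scale-invariance will follow by direct inspection of the Heisenberg dilation.

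For each $i \ge 1$, I will choose $K_i^\star \subset S^1$ so that its complement is the disjoint union of (i) a ``wraparound'' open interval $J_i$ with endpoints $\pm\arctan(\alpha/i^2)$ and containing $\pi$, whose angular midpoint is therefore exactly $\pi$; and (ii) for each complementary interval $I_j=(a_j,b_j)$ of $K$ in $[-\alpha,\alpha]$, the small open interval $(\arctan(a_j/i^2), \arctan(b_j/i^2))$. These are disjoint open intervals of length less than $2\pi$, so Theorem~\ref{thm:constructArea} yields a scale-invariant area-minimizing cone $\Sigma_i := \Sigma_{K_i^\star}$, and I set $\Lambda_i := s_{i^{-1},i}(\Sigma_i)$. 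The stretch $s_{i^{-1},i}$ fixes the negative $x$-axis and multiplies the slope of each positive horizontal ray by $i^2$, so unwinding the definition shows $\Lambda_i$ consists of: $R_0$ with branches of slope $\pm\alpha$ (from the midpoint ray of $J_i$); a positive horizontal ray of slope $k$ for each $k \in K$; and a positive horizontal ray of slope close to $m_j$ with branches of slope $a_j$ and $b_j$, for each complementary interval $I_j$. The only approximation is the midpoint slope of each small interval, whose angular midpoint is $\tfrac12(\arctan(a_j/i^2)+\arctan(b_j/i^2))$ rather than $\arctan(m_j/i^2)$; this forces $\Lambda_i = \Lambda_K$ only in the limit, and $\Lambda_i \to \Lambda_K$ locally as $i \to \infty$.

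To deduce energy-minimality, note that every slope in $\Sigma_i$ is $O(i^{-2})$, so the intrinsic gradient of $\Sigma_i$ is $O(i^{-2})$ while that of $\Lambda_i$ is $O(1)$. Applying \eqref{eq:energy area Taylor} to $\Sigma_i$ and to any compactly supported perturbation with gradient still $O(i^{-2})$ gives $\cS^3 = \mu + E + O(i^{-6}\mu(U))$ on a bounded $U$; combined with the scaling $E_{s(U)}(s(\Gamma)) = i^3 E_U(\Gamma)$, the area-minimality of $\Sigma_i$ translates into the statement that no compactly supported perturbation of $\Lambda_i$ on a bounded domain decreases its energy by more than $O(i^{-2})$. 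Letting $i \to \infty$ and invoking lower semicontinuity of the intrinsic Dirichlet energy under local convergence of intrinsic Lipschitz graphs (available via Definition~\ref{def:distrib gradient}) yields that $\Lambda_K$ is itself energy-minimizing. Scale-invariance is immediate: the Heisenberg dilation $\delta_t(x,y,z)=(tx,ty,t^2z)$ fixes $R_0$ and each horizontal ray from the origin, and carries any branch from $p \in R_0$ or $p \in R_j$ to a branch of the same slope from $\delta_t(p)$.

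The main technical obstacle lies in the third step: rigorously establishing lower semicontinuity of the intrinsic Dirichlet energy across the limit $\Lambda_i \to \Lambda_K$, in which the defining functions are only intrinsic Lipschitz and the singular characteristic nexuses move with $i$. A cleaner self-contained alternative is to avoid the limit and construct a calibration directly on $\Lambda_K$ using the calibration criterion for intrinsic Lipschitz energy-minimizers developed earlier in the paper: the foliation of $V_0$ by the parabolic projections of the horizontal rays and branches determines a horizontal candidate field leaf by leaf, and one verifies the calibration identity on each smooth piece and, with correct sign conventions for $\nabla_f f$, across the projections of $R_0$ and each $R_j$ onto $V_0$.
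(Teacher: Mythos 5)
Your primary route (write $\Lambda_K$ as a limit of stretched area-minimizing cones and transfer minimality through the Taylor expansion of area in energy) reproduces the idea of Lemma~\ref{lem:herringbone}, but there is an essential difference that creates a genuine gap. In Lemma~\ref{lem:herringbone} the surface $\Gamma_{f_a}$ is \emph{equal} to its own stretch $s_{r,r^{-1}}(\Gamma_{f_a})=\Gamma_{f_{r^{-1}a}}$ up to reparametrization, so the same pair $(\Gamma,\Lambda)$ is compared at every $r$ and Lemma~\ref{lem:stretch limit energy} controls the error with a single fixed intrinsic Lipschitz constant. In your construction, $\Lambda_i = s_{i^{-1},i}(\Sigma_i)$ is only close to $\Lambda_K$ and never equal, so passing the ``approximate energy-minimality up to $O(i^{-2})$'' of $\Lambda_i$ to energy-minimality of $\Lambda_K$ requires (i) lower semicontinuity of $E_U$ along the sequence $\Lambda_i \to \Lambda_K$ and (ii) a recovery sequence of competitors $\Lambda_i'$ for $\Lambda_i$ with matching boundary data and $\limsup_i E_U(\Lambda_i') \le E_U(\Lambda')$ for any competitor $\Lambda'$ of $\Lambda_K$. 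You correctly flag this as the technical obstacle; it is not resolved in the proposal, and in fact the paper deliberately avoids it --- Proposition~\ref{prop:stretchLimits} records only the $L_1^{\mathrm{loc}}$ convergence $\one_{S_n^+}\to\one_{\Lambda_K^+}$ and does not attempt to deduce energy-minimality from it.

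The paper's actual proof is the alternative you sketch at the end: a direct calibration. Concretely, one first shows $\Lambda_K$ is an entire intrinsic Lipschitz graph using the non-crossing criterion of Lemma~\ref{lem:blarg} applied to the tree of horizontal curves $\gamma_p$ through the origin; one then defines $\tau_K$ to be constant on vertical lines (using that $\Lambda_K$ is a $Z$--graph), taking the values $m_i\pm\delta_i$, $\pm\alpha$, and $y/x$ on the appropriate wedges of $\R^2$; one verifies that the resulting $\ovln{M}_K$ satisfies the equal-slope jump condition \eqref{eq:tau conserv} across each singular half-plane, applies Proposition~\ref{prop:GR conserv} on the finite truncations $K_n$ and passes to the uniform limit to get conservativity of $\ovln{M}_K$; and finally one checks $\tau_K = \nabla_h h \circ \Pi$ $\cS^3$--a.e.\ via \cite{BiCaSC} before invoking Proposition~\ref{prop:calibrations}. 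Your sketch gestures at this but omits the intrinsic Lipschitz verification, the construction of $\tau_K$ on vertical lines rather than on cosets of $\langle Y\rangle$, and the handling of countably many singular rays; these are not just bookkeeping --- the piecewise-constant calibration and the conservativity argument through Proposition~\ref{prop:GR conserv} are the content of the proof.

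Scale-invariance: your observation is correct and matches the paper implicitly, since $\Lambda_K$ is a cone built from rays through $\mathbf{0}$ and the dilation $s_{t,t}$ preserves slopes and the branching points on $R_0$ and each $R_i$.
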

Since $\alpha=\sup K$ and $-\alpha=\inf K$, $\Lambda_K$ is uniquely determined by $K$.

\begin{figure}
  \begin{centering}
    \includegraphics[width=.6\textwidth]{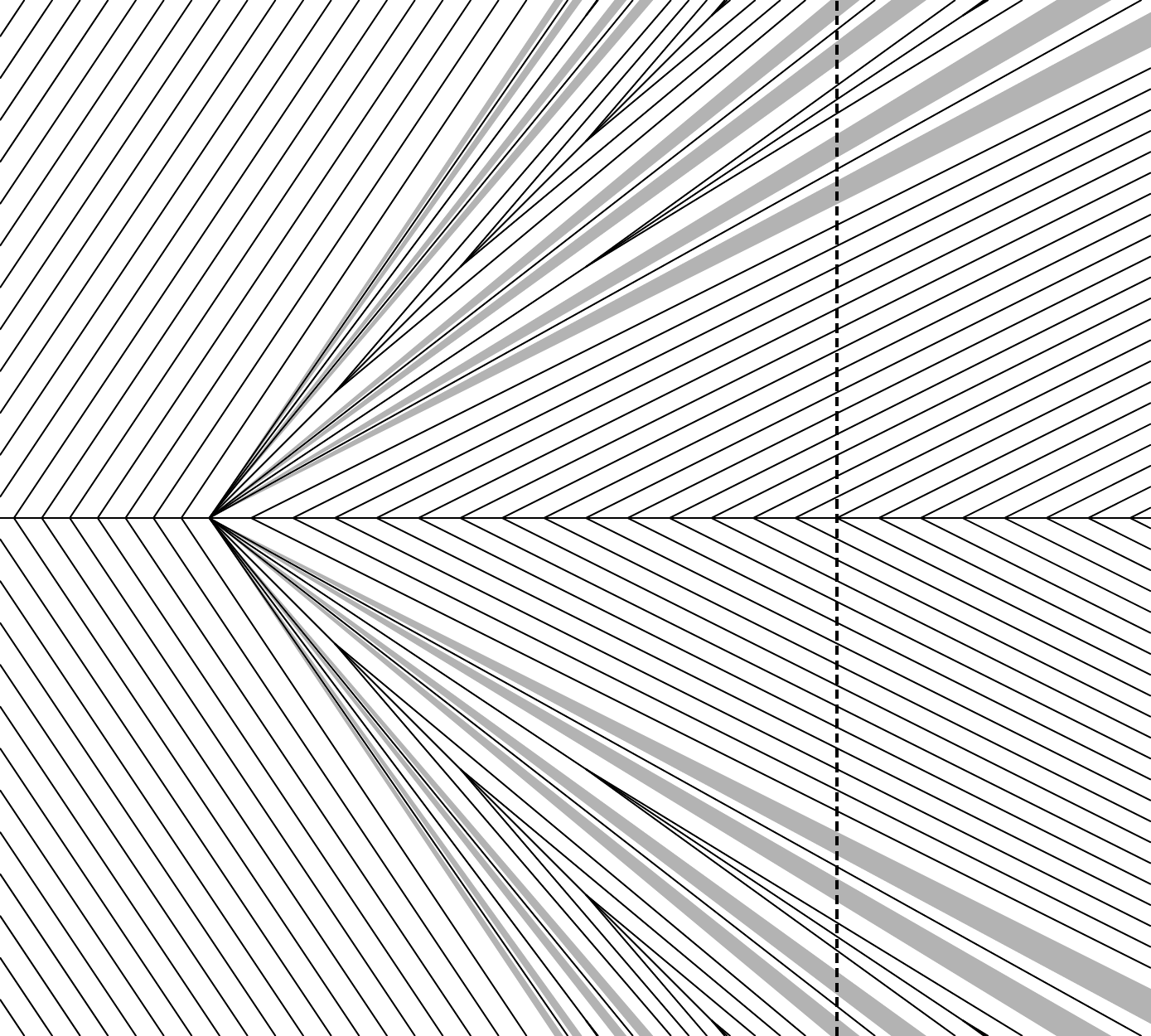}\\
    \medskip 
    \includegraphics[width=.8\textwidth]{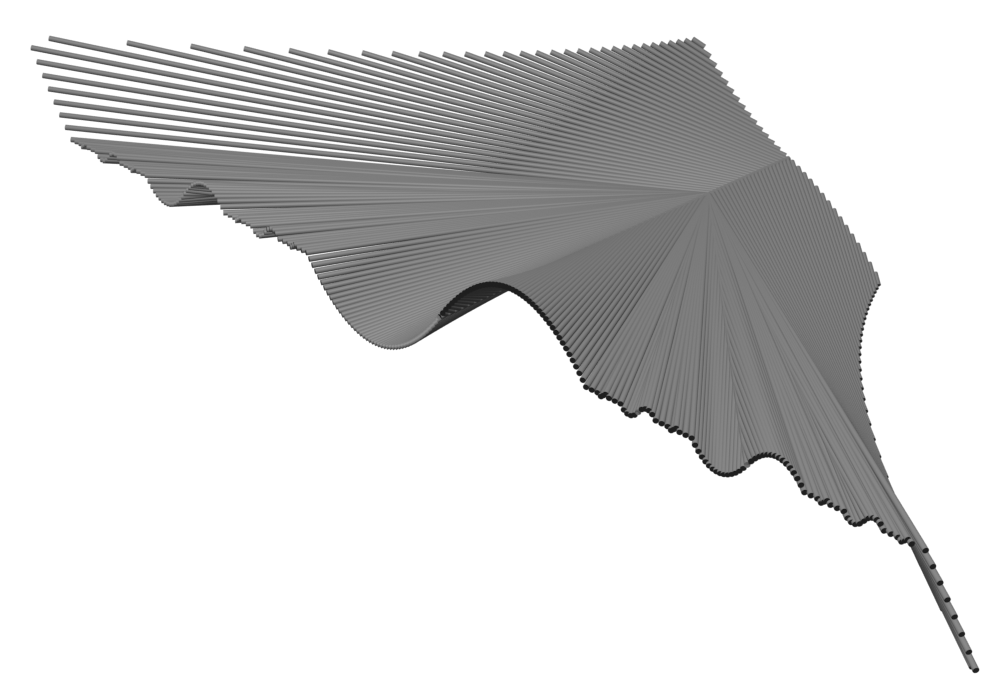}
    \medskip 
    \includegraphics[width=.7\textwidth]{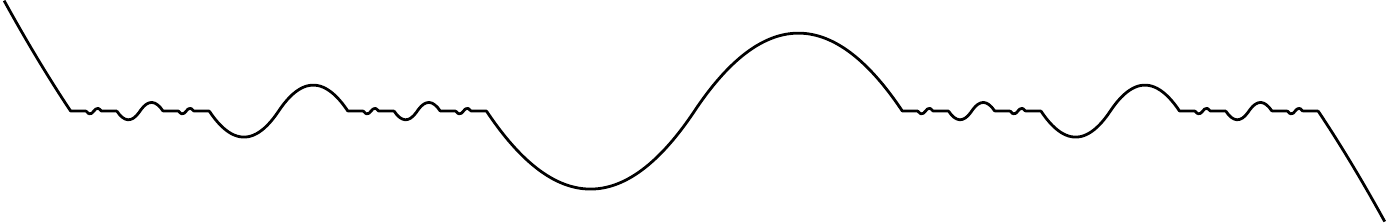}
  \end{centering}
  \caption{\label{fig:cantor}
    An energy-minimizing graph based on the middle-thirds Cantor set $K$.  The top figure shows the foliation of $\Lambda_K$ by horizontal curves, projected to the $xy$--plane. In the shaded regions, the surface coincides with the horizontal plane through $\mathbf{0}$. The bottom figure shows the cross-section of the surface in the vertical plane marked by the dashed line; the surface coincides with the $xy$--plane in the flat regions. A $3$D model of the surface can be found in the supplementary materials.
  }
\end{figure}

Any such $\Lambda_K$ can be written as a limit of stretched area-minimizing surfaces.
\begin{prop}\label{prop:stretchLimits}
  Let $\alpha$, $K=[-\alpha, \alpha]\setminus \bigcup I_i$ be as in Theorem~\ref{thm:constructEnergy}. We identify $S^1$ with the interval $(-\pi,\pi]$ so that the $+x$--axis is identified with $0$. For $n>\sqrt{\alpha}$, let
  $K_n= n^{-2} K$ and let $S_n=\Sigma_{K_n}$ be the corresponding area-minimizing surface constructed in Theorem~\ref{thm:constructArea}.

  Let $S_n^+$ and $\Lambda_K^+$ be the half-spaces bounded by $S_n$ and $\Lambda_K$. Then $\one_{S_n^+}\to \one_{\Lambda_K^+}$ in $L_1^{\mathrm{loc}}(\HH)$.
\end{prop}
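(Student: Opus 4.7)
The plan is to compare $S_n$ and $\Lambda_K$ after applying the stretch automorphism $s_{n^{-1},n}(x,y,z)=(nx,y/n,z)$, which aligns the ruled structures of the two surfaces. Both surfaces are unions of horizontal rays indexed by the common combinatorial data $(K,\{I_i\})$: $\Lambda_K$ has rays from the origin at slopes $k\in K$, a spine $R_i$ for each interior gap $I_i$ with branching rays at slopes $a_i,b_i$, and the extra spine $R_0$ with branching rays at slopes $\pm\alpha$. The surface $S_n=\Sigma_{K_n}$ has the same combinatorial pattern with slopes replaced by angles in $K_n=n^{-2}K\subset S^1$, and with the role of $R_0$ played by the ``back'' spine of $S_n$ at angle $\pi$, coming from the unique large gap of $K_n\subset S^1$ around $\pi$.

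Under the stretch $s_{n^{-1},n}$, a horizontal ray from the origin at angle $\theta$ is sent to one at slope $n^2\tan\theta$, so the rays of $S_n$ at angles $k/n^2$ stretch to rays at slopes $n^2\tan(k/n^2)\to k$; the negative $x$--axis spine is preserved setwise and plays the role of $R_0$; and branching pairs at angles $a_i/n^2,b_i/n^2$ (respectively $\pm\alpha/n^2$) stretch to rays at slopes tending to $a_i,b_i$ (respectively $\pm\alpha$), with the basepoint $s_{n^{-1},n}(p)$ sweeping the corresponding spine of $\Lambda_K$ as $p$ sweeps the spine of $S_n$. Writing $\Lambda_K=\Gamma_f$ and the stretched $S_n$ as $\Gamma_{g_n}$, this ray-wise convergence gives pointwise convergence $g_n\to f$ on the interior of each cell of the foliation of $V_0$ by projected horizontal rays. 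Since the parametrization $(v,s)\mapsto v\cdot Y^s$ from $V_0\times\R$ to $\HH$ is volume-preserving, convergence $g_n\to f$ in $L_1^{\mathrm{loc}}(V_0)$ upgrades to $\one_{S_n^+}\to\one_{\Lambda_K^+}$ in $L_1^{\mathrm{loc}}(\HH)$ by a Fubini argument.

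To promote pointwise convergence to $L_1^{\mathrm{loc}}$, I would use the scale-invariance of both cones under Heisenberg dilations, which forces $|f|,|g_n|\leq C_\alpha\sqrt{|z|+x^2}$ on bounded sets, and apply dominated convergence. The main obstacle is the case in which $K$ has intricate structure --- in particular, a Cantor set of positive measure as in Figure~\ref{fig:cantor} --- for which the cell decomposition has countably many components accumulating on the projection of the cone over $K$. I would handle this by $\varepsilon$--truncation: replace $K$ by the set $K^{(\varepsilon)}$ obtained by filling in all gaps of length less than $\varepsilon$, run the finite-gap version of the argument on each fixed compact ball (where pointwise convergence holds on finitely many cells), and use scale-invariance to bound the measure of the ``unresolved'' small-cell region, uniformly in $n$, by a quantity vanishing as $\varepsilon\to 0$.
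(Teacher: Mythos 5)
Your approach is fundamentally the same as the paper's: both proofs apply the stretch $s_{n^{-1},n}$ to $\Sigma_{K_n}$, match the two ruled structures ray by ray so that angles $\theta/n^2$ stretch to slopes $n^2\tan(\theta/n^2)\to\theta$, and combine ray-wise convergence with a uniform intrinsic Lipschitz bound coming from Lemma~\ref{lem:blarg} to obtain $L_1^{\mathrm{loc}}$ convergence of the epigraph indicators. (One small slip: you wrote $s_{n^{-1},n}(x,y,z)=(nx,y/n,z)$, which in the paper's convention is $s_{n,n^{-1}}$; the paper's $s_{n^{-1},n}(x,y,z)=(n^{-1}x,ny,z)$ is what multiplies slopes by $n^2$, and your subsequent slope computation matches this correct map.)

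The execution differs. The paper builds homomorphisms $h_n\from\Lambda_K\to S_n$ of the underlying $\R$--trees, shows $h_n\to\id_{\Lambda_K}$ uniformly on bounded sets, and invokes the uniform Lipschitz constant to conclude; this handles the Cantor cone $P_K$ without any special treatment because the argument is at the level of horizontal curves, not the graphing functions. You instead work with the graphing functions $g_n\to f$ plus domination, Fubini, and dominated convergence. That reformulation is correct, and the domination bound $|f|,|g_n|\lesssim_\alpha\sqrt{|z|+x^2}$ and the volume-preserving parametrization $(v,s)\mapsto vY^s$ are both fine. The weak point is the pointwise step on $P_K$: when $K$ is a Cantor set, the ``interiors of cells'' cover only $\bigcup W_i$, which misses the positive-measure region $\Pi(\Lambda_K\cap\pi^{-1}(P_K))$. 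Your $\varepsilon$--truncation is a plausible but incomplete patch — it compares $S_n$ to $\Lambda_{K^{(\varepsilon)}}$ rather than to $\Lambda_K$, so a second estimate relating $\Lambda_{K^{(\varepsilon)}}$ to $\Lambda_K$ is needed, and the claimed uniform-in-$n$ bound on the unresolved region is asserted but not derived. A more direct fix (and closer in spirit to the paper's $\R$--tree argument) is to show $g_n(v)\to f(v)$ for a.e.\ $v$ with slope coordinate $k(v)=-2z(v)/x(v)^2\in K$: if $k(v)$ is not an endpoint of a gap (a null set of $v$), then the angle parameter of the $S_n$--ray through $v$ satisfies $\kappa_n(v)=n^2\arctan(k(v)/n^2)=k(v)-O(n^{-4})$, and if $\kappa_n(v)$ falls into a gap $I_j$, that gap shrinks to zero because $k(v)$ is a two-sided limit point of $K$, forcing $g_n(v)\to k(v)x(v)=f(v)$. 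Dominated convergence then finishes without truncation.
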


Conversely, one may ask when a sequence of stretched area-minimizing surfaces has a subsequence that converges to an energy-minimizing surface. We first define the \emph{horizontal excess} of a set; the horizontal excess of the half-space bounded by a graph is related to the energy of that graph, but the horizontal excess is defined for a larger class of sets.
Let $E\subset \HH$ be a subset with finite perimeter and horizontal normal $\nu_E$. For $p\in \partial E$, $r>0$, and a horizontal vector $\nu\in S^1$, we define
\begin{align*}
  \Exc_{B(p,r)}(E,\nu)
  &=\frac{1}{r^3}\int_{B(p,r)} |\nu_E(p)-\nu|^2\ud |\partial E|(p)\\
  \Exc_{B(p,r)}(E)
  &=\inf_{\nu\in S^1}\Exc_{B(p,r)}(E,\nu),
\end{align*}
where $|\partial E|$ is the perimeter measure of $E$ (Section~\ref{sec:perim div}).
Monti \cite{MontiLipschitz} showed that if $E$ is perimeter-minimizing and if   $\Exc_{B(p,r)}(E)$ is sufficiently small for some $p\in \partial E$, $r>0$, then $\partial E$ can be approximated by an intrinsic Lipschitz graph near $p$, and the energy of this graph 

\begin{question}\label{que:aniso blowup}
  Let $c,k>0$. Suppose that $r_i\to \infty$ and $E_1,E_2,\dots\subset \HH$ are a sequence of sets that are perimeter-minimizing in the stretched balls $W_i=s_{r_i,r_i^{-1}}(B(\mathbf{0},k))$. Let $\hat{E}_i=s_{r_i,r_i^{-1}}^{-1}(E_i)$ and suppose that $\Exc_{B(\mathbf{0},k)}(\hat{E}_i,Y)<c$ for all $i$. Is it possible to choose $c$ and $k$ such that a subsequence of the $\hat{E}_i$'s converge to a set $E$ such that $\partial E$ is energy-minimizing or contact harmonic on $B(\mathbf{0},1)$?
\end{question}
(For the definition of contact harmonic, see below.) 

One might approach this question by noting that the excess $\Exc_{B}(E_i,Y)<c$ must be small for appropriate balls in $W_i$, so the results of \cite{MontiLipschitz} give an intrinsic Lipschitz half-space $\Gamma_i^+$ that approximates $E_i$. Then $\hat{\Gamma}^+_i=s_{r_i,r_i^{-1}}^{-1}(\Gamma^+_i)$ is a graph approximating $\hat{E}_i$'s, and one can hope for the corresponding functions to converge.

The problem, however, is that though the intrinsic Lipschitz constants of the $\Gamma_i$'s are uniformly bounded, the intrinsic Lipschitz constants of the $\hat{\Gamma}_i$'s are not. At best, if $h_i$ is the function such that $\hat{\Gamma}_i=\Gamma_{h_i}$, then $h_i$ satisfies an intrinsic Sobolev bound like
$$\int_{D} (\nabla_{h_i} h_i)^2 \ud \mu \le C$$
on an appropriate domain $D\subset V_0$. Not much is known about such functions, and more research may be necessary to answer Question~\ref{que:aniso blowup}.

Another approach to Question~\ref{que:aniso blowup} is to study other characterizations of energy-minimizing graphs. In an analogue to the usual notion of harmonicity, we define a \emph{contact harmonic graph} to be a critical point of the energy with respect to a class of deformations called contact deformations. In particular, energy-minimizing graphs are contact harmonic. In Section~\ref{sec:graphs and variations}, we prove some tools for working with such graphs, including first variation formulas for the energy of smooth and intrinsic Lipschitz graphs. These formulas lead to the following characterization of contact harmonicity.
\begin{thm}\label{thm:fvf intro}
  A smooth intrinsic graph $\Gamma=\Gamma_f$ is contact harmonic on $U\subset V_0$ (a critical point of the energy with respect to contact variations supported on $U$) if and only if
  \begin{equation}\label{eq:smooth harmonic}
    2 \partial_zf \cdot \nabla_f^2 f - \nabla_f^3 f = 0
  \end{equation}
  on $U$.

  For an intrinsic Lipschitz function $f$ and for $w\in C^\infty_c(U)$, let
  $$\Delta_f w = \nabla_f[\partial_x w] - \nabla_f f\cdot \partial_z w - f\nabla_f[\partial_z w].$$
  (When $f$ is smooth, $\Delta_fw=\nabla_f^2w$.) Let   
  \begin{align*}
    B_2(f, w) & = -\Delta_f w \cdot \nabla_{f}f + \frac{1}{2}  (\nabla_ff)^2\cdot \partial_z w,\\
    B_1(f, w) & = f B_2(f, w) - \frac{3}{2} (\nabla_ff)^2 \cdot \nabla_{f} w,
  \end{align*}
  Then $\Gamma=\Gamma_f$ is contact harmonic on $U$ if and only if
  $$\int_U B_1(f, w)\ud\mu = \int_U B_2(f, w)\ud \mu = 0$$
  for every $w\in C^\infty_c(U)$. 
\end{thm}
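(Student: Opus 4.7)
The plan is to compute the first variation of $E_U$ under contact deformations of $\Gamma_f$ and extract the resulting Euler--Lagrange conditions.

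First, I would parametrize the contact deformations of $\Gamma_f$. A compactly supported contact Hamiltonian $H$ on $\HH$ generates a contact flow $F_t$, and for small $t$ we have $F_t(\Gamma_f)=\Gamma_{f_t}$ locally. Using the explicit formula for the contact vector field $V_H$ with Hamiltonian $H$ and a direct group-law computation tracking the image of $\Psi_f(x,0,z)$ under $F_t$, I would show that the infinitesimal variation of $f$ satisfies
\begin{equation*}
  w := \partial_t f_t|_{t=0} = -\nabla_f \tilde H - \tilde H\,\partial_z f,
\end{equation*}
where $\tilde H = H\circ \Psi_f$ is the pullback of $H$ to $V_0$. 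A key observation is that although the intermediate formula involves transverse derivatives of $H$, the particular combination that appears in $w$ regroups into an expression involving only derivatives of $\tilde H$, so $w$ is a functional of $\tilde H$ alone.

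Second, I would compute the first variation. Using $\partial_t(\nabla_{f_t}f_t)|_{t=0}=\nabla_f w - w\,\partial_z f$ together with the commutator identity $\partial_z \nabla_f f = \nabla_f \partial_z f - (\partial_z f)^2$, substitution produces cancellations leaving
\begin{equation*}
  \tfrac{d}{dt}E_U(\Gamma_{f_t})\big|_{t=0} = -\int_U \nabla_f f \cdot \Delta_f \tilde H\, d\mu + \tfrac{1}{2}\int_U (\nabla_f f)^2\,\partial_z \tilde H\, d\mu = \int_U B_2(f,\tilde H)\, d\mu.
\end{equation*}
Vanishing of this variation for all $\tilde H\in C^\infty_c(U)$ is the $B_2$ condition. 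The $B_1$ condition arises by enlarging the admissible class of contact Hamiltonians to include Lipschitz ones whose pullback has the form $fw$ for $w\in C^\infty_c(U)$; verifying the algebraic identity $\int_U B_2(f, fw)\, d\mu = \int_U B_1(f, w)\, d\mu$ by direct expansion shows that this enlarged family of variations is captured by $B_1$.

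Third, in the smooth case I would integrate by parts twice using the formal adjoint $\nabla_f^* g = -\nabla_f g + g\,\partial_z f$. After the cross term $(\nabla_f f)(\nabla_f \partial_z f)$ cancels via the same commutator identity, one obtains
\begin{equation*}
  \int_U B_2(f, w)\, d\mu = \int_U w\,[2\,\partial_z f \cdot \nabla_f^2 f - \nabla_f^3 f]\, d\mu,
\end{equation*}
and a parallel computation yields $\int_U B_1(f, w)\, d\mu = \int_U wf\,[2\,\partial_z f \cdot \nabla_f^2 f - \nabla_f^3 f]\, d\mu$, so in the smooth setting both conditions collapse into the single stated PDE.

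The main obstacle is Step 1 --- executing the group-law computation cleanly and verifying the algebraic regrouping that expresses $w$ in terms of $\tilde H$ alone. A secondary subtlety is justifying the $B_1$ condition in the intrinsic Lipschitz setting, which requires enlarging the admissible class of contact Hamiltonians beyond smooth ones, since only the $B_2$ condition follows directly from first-order criticality with respect to smooth Hamiltonians.
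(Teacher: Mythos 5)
Your overall approach is correct and closely tracks the paper's (Theorems~\ref{thm:fvf domain smooth} and \ref{thm:fvf domain lip}, Lemma~\ref{lem:B1 and B2}), and the observation that the pointwise variation of $f$ under a graph-preserving contact flow depends only on the pullback $\tilde H=\psi\circ\Psi_f$ is a genuine simplification: it bypasses the paper's integration-by-parts argument showing $A_1=0$ and instead makes that cancellation pointwise. Writing $W=w_1\nabla_f+w_2\partial_z$ as in the paper, one has $\tilde H=w_2$, $\partial_t f_t\big|_{t=0}=-\nabla_f w_2-w_2\,\partial_z f$, and the $w_1$-dependence drops out because the flow of $w_1\nabla_f$ reparametrizes the characteristic foliation without moving $\Gamma_f$. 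Your integration by parts in the smooth case is correct and reproduces \eqref{eq:smooth harmonic}, and the parametrization of admissible pullbacks as $a+fb$ with $a,b\in C^\infty_c(U)$ correctly explains why two conditions ($B_1$ and $B_2$) arise.

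Where the plan has a real gap is the intrinsic Lipschitz case, and you have diagnosed the issue but proposed a fix that does not work. Your plan is to obtain the $B_1$ condition by plugging $\tilde H=fw$ into the first-variation formula $\int_U B_2(f,\tilde H)\,d\mu$ and then verifying $\int_U B_2(f,fw)\,d\mu=\int_U B_1(f,w)\,d\mu$ ``by direct expansion.'' But for Lipschitz $f$ the quantity $B_2(f,fw)$ is not defined: $\Delta_f(fw)$ expands (via \eqref{eq:delta f expand}) into terms like $\nabla_f[\partial_x(fw)]$, which require pointwise differentiation of $f$ that a merely Lipschitz $f$ does not admit. The direct expansion of $\Delta_f(fw)-f\,\Delta_f w$ used in Lemma~\ref{lem:B1 and B2} also genuinely requires $f$ smooth (it produces $\nabla_f^2f\cdot w$). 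The paper's route around this is to \emph{define} $B_1$ intrinsically (it contains only $\Delta_f w$ and $\nabla_f w$ applied to the smooth test function $w$), prove the variation formula $\big|E_U(f_t)-E_U(f)-t\int_U B_1(f,x\circ W)+B_2(f,z\circ W)\,d\mu\big|\le Ct^2$ for smooth $f$ with constants depending only on $\|f\|_\infty$, and then pass to the Lipschitz limit using the smooth approximants of Theorem~\ref{thm:CMPSC approx} and dominated convergence (Theorem~\ref{thm:fvf domain lip}). Your plan does not mention this approximation step, and without it the argument for the $B_1$ half of the Lipschitz statement does not close. Relatedly, the phrase ``enlarging the admissible class of contact Hamiltonians to include Lipschitz ones'' is off: the potentials $\psi$ on $\HH$ with $YY\psi=0$ are already smooth and form the correct class; only their pullbacks $\tilde H=a+fb$ lose regularity when $f$ does, and that loss is the reason an approximation argument is needed, not a change of admissible class.
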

This theorem follows from Theorem~\ref{thm:fvf domain smooth} (in the smooth case) and Theorem~\ref{thm:fvf domain lip} (in the intrinsic Lipschitz case).

Condition \eqref{eq:smooth harmonic} is weaker than the condition that the horizontal mean curvature of $\Gamma_f$ vanishes (i.e., $\nabla_f^2f=0$). Indeed, Theorem~\ref{thm:fvf intro} implies that the surface $\Gamma_f=\{(x,y,z)\in \HH\mid y=x^2\}$, which has $\nabla_f^2f=2$, is contact harmonic. This happens because contact harmonic graphs are energy-stationary only for contact diffeomorphisms, not arbitrary diffeomorphisms.  Contact diffeomorphisms preserve horizontality, so they also preserve horizontal connectivity. That is, if $\phi\from \HH\to \HH$ is a contact diffeomorphism and if $p,q\in \Gamma$ are connected by a horizontal curve in $\Gamma$, then $\phi(p)$ and $\phi(q)$ are connected by a horizontal curve in $\phi(\Gamma)$. A contact harmonic graph may admit smooth deformations that reduce its energy, but these smooth deformations affect the horizontal connectivity of the graph. \footnote{Manuel Ritoré informs us that the cylinder $\{x^2+y^2=r^2\}$ is an area-stationary analogue of this example, i.e. a smooth surface in $\HH$ with nonzero horizontal mean curvature that is area-stationary under contact deformations, but as far as we are aware, the calculation has not been published.}

We use the first-variation formula for smooth graphs to prove a first variation formula for piecewise smooth graphs (Theorem~\ref{thm:fvf herringbone}). This shows that if $\Gamma$ is a piecewise smooth contact harmonic graph whose characteristic nexus is a curve and if the foliation by horizontal curves is well-behaved near the characteristic nexus, then the slope of the characteristic nexus is the average of the slopes of the horizontal curves on either side of the nexus. Such singularities are similar to the singularities of $H$--minimal surfaces studied in \cite{PaulsHMinimal, ChengHwangYang}. 

Finally, we conclude with an observation. One motivation for this work was the Bernstein problem for surfaces with low regularity. Bernstein's Theorem states that an area-minimizing codimension--$1$ graph in $\R^n$ is a plane. The Bernstein problem asks for conditions under which other classes of area-minimizing surfaces are planes. Barone Adesi, Serra Cassano, and Vittone showed that if $\Gamma\subset \HH$ is a area-minimizing intrinsic graph of an entire $C^2$ function, then $\Gamma$ is a vertical plane \cite{BASCV}. The same result for $C^1$ functions was proved by Galli and Ritoré \cite{GRBernstein}, and for locally Lipschitz functions by Nicolussi Golo and Serra Cassano \cite{NSCBernstein}, but there are many examples of area-minimizing intrinsic Lipschitz graphs that are not vertical planes, such as many of the examples in Section~\ref{sec:calibrations}. All of these examples, however, are also entire $Z$--graphs, i.e., sets of the form $\{(x,y,g(x,y))\in \HH\mid x,y \in \R\}$. One may then ask the following:
\begin{question}\label{que:Bernstein}
  Is every nonplanar entire area-minimizing (or energy-minimizing) intrinsic Lipschitz graph also a $Z$--graph?
\end{question}

\begin{remark}
  The 3D models used in the figures in this paper can be found in .obj format in the ancillary files on the arXiv page for this paper. These files can be opened in Preview on Macs, Paint 3D on Windows, or any 3D modeling program. They can also be found at \url{https://cims.nyu.edu/~ryoung/harmonic/}.
\end{remark}

\subsection{Outline of paper}
In Section~\ref{sec:prelims}, we establish notation for the Heisenberg group and prove some integration formulas which will be used throughout the paper. In Section~\ref{sec:calibrations}, we define the intrinsic Dirichlet energy, prove some basic properties of energy-minimizing surfaces, and describe a method to prove that a surface is energy-minimizing using a calibration. We use this method to prove Theorem~\ref{thm:constructEnergy} and Proposition~\ref{prop:stretchLimits}. In Section~\ref{sec:graphs and variations}, we define contact variations and contact harmonicity and prove first variation formulas for the energy of smooth graphs and intrinsic Lipschitz graphs, including Theorem~\ref{thm:fvf intro}. Many energy-minimizing and area-minimizing surfaces have singularities along curves, which we call \emph{herringbone singularities}, and we prove a first variation formula for the energy of such a graph in Section~\ref{sec:fvf herring}. 

The author would like to thank Roberto Monti, Sebastiano Nicolussi Golo, Manuel Ritoré, and Davide Vittone for their helpful discussions during the preparation of this paper and to thank the Institute of Advanced Study for its hospitality.
\section{Preliminaries}\label{sec:prelims}
Throughout this paper, we use the following standard conventions for asymptotic notation. The notations $f \lesssim g$ and $g \gtrsim f$ mean that $f\le C g$ for some universal constant $C\in (0,\infty)$, and $f\approx g$ means that $f \lesssim g$ and $g \lesssim f$.  If $C$ depends on some parameters, we indicate this by subscripts, i.e., $f\lesssim_{t}g$ implies that $f\le C g$ where $C=C(t)$ depends only on $t$. Finally, we use big--$O$ notation $O(f)$ to denote an error term whose absolute value is bounded by a universal constant multiple of $|f|$.  

\subsection{The Heisenberg group}
The Heisenberg group $\HH$ is the $3$--dimensional simply connected Lie group with Lie algebra
$$\mathfrak{h}:=\langle X, Y, Z\mid [X,Y]=Z, [X,Z]=[Y,Z]=0\rangle.$$
We identify $\HH$ with its Lie algebra via the Baker--Campbell--Hausdorff formula, i.e., $\HH=\langle X,Y,Z\rangle \cong \R^3$ and 
$$(x,y,z)\cdot(x',y',z')=\left(x+x',y+y',z+z'+\frac{xy'-yx'}{2}\right).$$
We use $X$, $Y$, and $Z$ to denote the coordinate vectors of $\R^3$ and the corresponding left-invariant fields $X_{(x,y,z)}=(1,0,-\frac{y}{2})$, $Y_{(x,y,z)}=(0,1,\frac{x}{2})$, and $Z_{(x,y,z)}=(0,0,1)$. We write the coordinate vector fields in $\R^3$ as $\partial_x$, $\partial_y$, and $\partial_z=Z$.  Every vector $v\in \HH$ generates a one-parameter subgroup of $\HH$; we write $\langle v\rangle=\R v$ for this subgroup and define $v^t=tv$ for all $t\in \R$.

Let $\mathsf{A}=\spanop(X,Y)$ be the left-invariant distribution spanned by $X$ and $Y$; we refer to this as the \emph{horizontal distribution}. 
Curves that are Lipschitz (with respect to the Euclidean metric) and almost everywhere tangent to $\mathsf{A}$ are called \emph{horizontal curves}. For $h\in \HH$ and $(a,b)\in \R^2\setminus (0,0)$, we call the line $L=h\cdot \langle aX+bY\rangle$ a \emph{horizontal line}. Let $\pi\from \HH\to \R^2$ be the projection $\pi(x,y,z)=(x,y)$; we will identify $\R^2$ with the $xy$--plane in $\HH$. The \emph{slope} of a horizontal line $L$ is the slope of the projection $\pi(L)$, i.e., $\frac{b}{a}$, as long as $a\ne 0$. A \emph{vertical plane} $V\subset \HH$ is a plane parallel to $\langle Z\rangle$; its slope is the slope of the projection $\pi(V)$.

Let $d$ represent the \emph{Carnot--Carathéodory metric} on $\HH$. That is, we define a norm on $\mathsf{A}$ by $\|aX+bY\|=\sqrt{a^2+b^2}$, and given a horizontal curve $\gamma\from I\to \HH$, we define the length of $\gamma$ as
$$\ell(\gamma)=\ell_{\R^2}(\pi\circ \gamma)=\int_I \|\gamma'(t)\|\ud t.$$
For $p,q\in \HH$, let $d(p,q)=\inf_{\gamma}\ell(\gamma)$, where the infimum is taken over all horizontal curves $\gamma\from [0,1]\to \HH$ such that $\gamma(0)=p$ and $\gamma(1)=q$. This is a left-invariant metric that satisfies the ball-box inequality
\begin{equation}\label{eq:ballbox}
  d(\mathbf{0},p)\approx \max\{|x(p)|, |y(p)|, \sqrt{|z(p)|}\}.
\end{equation}
This metric gives $\HH$ Hausdorff dimension $4$, and a smooth codimension--$1$ surface or intrinsic Lipschitz graph (Definition~\ref{def:intrinsic lipschitz graph}) has Hausdorff dimension $3$. Let $\cS^d$ be the spherical Hausdorff $d$--measure, normalized so that $\cS^4$ is Lebesgue measure on $\HH$ and the restriction of $\cS^3$ to the $xz$--plane is Lebesgue measure on the $xz$--plane.

\subsection{Intrinsic graphs}
Let $V_0=\{(x,y,z)\in \HH\mid y=0\}$ be the $xz$--plane.  For any $U\subset V_0$ and any $f\from U \to \R$, the \emph{intrinsic graph} of $f$ is the set
$$\Gamma_f=\{v\cdot Y^{f(v)}\mid v\in U\}.$$
Let $\Psi_f\from U\to \Gamma_f$ be the map $\Psi_f(v)=v\cdot Y^{f(v)}$.  Let $\Pi\from \HH\to V_0$,
\begin{equation}\label{eq:def Pi}
  \Pi(x,y,z) = (x,y,z)\cdot Y^{-y} = \left(x, 0, z - \frac{xy}{2}\right).
\end{equation}
This is the projection to $V_0$ whose fibers are the cosets of $\langle Y\rangle$. The restrictions $\Pi|_{\Gamma_f}$ and $\Psi_f|_{U}$ are bijections (homeomorphisms when $f$ is continuous), and $\Pi \circ \Psi_f=\id_{U}$.

Let $\gamma=(\gamma_x,\gamma_y,\gamma_z)\from I\to \HH$ be a horizontal curve such that $\gamma$ has \emph{unit $x$--speed}, i.e., $\gamma_x'=1$. For almost every $t\in I$,
$$\gamma'(t)=\gamma'_x(t) X_{\gamma(t)} + \gamma'_y(t) Y_{\gamma(t)} = X_{\gamma(t)} + \gamma'_y(t) Y_{\gamma(t)}.$$
For any $p\in \HH$, we have $\Pi_*(X_{p}) =\partial_x - y(p) \partial_z$ and $\Pi_*(Y_{p}) =0$, so
\begin{equation}\label{eq:horiz proj}
  (\Pi\circ \gamma)'(t) = \Pi_*(X_{\gamma(t)}) + \gamma'_y(t) \Pi_*(Y_{\gamma(t)}) = \partial_x - \gamma_y(t) \partial_z.
\end{equation}
That is, one can reconstruct $\gamma$ from the projection $\Pi\circ \gamma$.

Franchi, Serapioni, and Serra Cassano \cite{FSSC06} defined the class of intrinsic Lipschitz graphs, which are analogues of graphs of Lipschitz functions in $\R^n$.  We will give a definition which is equivalent to theirs, but with a different value for the Lipschitz constant \cite{Rigot}.  
\begin{defn}\label{def:intrinsic lipschitz graph}
  Let $\Gamma=\Gamma_f\subset \HH$ be an intrinsic graph and let $\lambda \in (0,1)$.  We say that $\Gamma$ is an \emph{intrinsic $\lambda$--Lipschitz graph} (and that $f$ is an \emph{intrinsic $\lambda$--Lipschitz function}) if for every $p,q\in \Gamma$,
  $$|y(q)-y(p)| \le \lambda d(p,q).$$
\end{defn}
If $\Pi(\Gamma_f)=V_0$, or equivalently, if the domain of $f$ is all of $V_0$, we say that $\Gamma_f$ is an \emph{entire} intrinsic Lipschitz graph. Every intrinsic Lipschitz graph is a subset of an entire intrinsic Lipschitz graph; see \cite[Thm.\ 27]{NY18} and \cite{Rigot}.

Let $\Gamma=\Gamma_f$ be an intrinsic Lipschitz graph. Let $\nabla_f$ be the vector field on $V_0$ given by $\nabla_f=\partial_x - f \partial_z$. We call the corresponding operator the \emph{intrinsic gradient}.  For every point $p\in \Gamma$, there is a (possibly non-unique) unit $x$--speed horizontal curve $\gamma=(\gamma_x,\gamma_y,\gamma_z)\from \R\to \Gamma$ such that $\gamma(0)=p$. We call the projection $\lambda=\Pi\circ \gamma$ of such a curve to $V_0$ a \emph{characteristic curve} of $\Gamma$. By \eqref{eq:horiz proj}, every characteristic curve is an integral curve of $\gamma$; conversely, by Theorems~1.1 and 1.2 of \cite{BiCaSC}, every integral curve of $\gamma$ is the projection of a unit $x$--speed horizontal curve. Thus the characteristic curves are exactly the integral curves of $\nabla_f$. When $f$ is smooth, there is a unique characteristic curve passing through every point of $V_0$, and the intrinsic gradient is the derivative along these characteristic curves.

The intrinsic gradient $\nabla_ff$ of $f$ is particularly important. When $f$ is smooth and $v\in V_0$,  $\nabla_ff(v)$ is the slope of the unique horizontal curve through $p=\Psi_f(v)$. If $s_{t,t}(x,y,z)=(tx,ty,t^2z)$ is the scaling automorphism, then $s_{t,t}(p^{-1}\Gamma)$ converges to a vertical plane with slope $\nabla_ff(v)$ as $t\to \infty$. When $f$ is intrinsic Lipschitz, the horizontal curve through $p$ may not be unique, and the derivative $\nabla_ff$ may not be defined everywhere. We thus define $\nabla_f f$ distributionally as follows.
\begin{defn}\label{def:distrib gradient}
  If $f\from V_0\to \R$ is continuous, we say that $\nabla_ff$ exists in the sense of distributions if there is a function $\theta \in L_\infty^{\mathrm{loc}}$ such that for every $\psi\in C^1_c$,
  $$\int_{V_0} \theta \psi \ud \mu= \int_{V_0} -f  \partial_x\psi + \frac{f^2}{2}\partial_z\psi\ud \mu.$$
  If so, we write $\nabla_ff=\theta$.  When $f$ is $C^1$, this coincides with the previous definition.
\end{defn}
The intrinsic gradient of an intrinsic $\lambda$--Lipschitz function is bounded by a function of $\lambda$. Conversely, if $U\subset V_0$ is an open set and $f\from U\to \R$ is $C^1$ and satisfies $\|\nabla_ff\|_{L_\infty}<L$, then $f$ is locally intrinsic Lipschitz with a constant depending on $L$ \cite[Prop.\ 1.8]{CMPSC}.

Citti, Manfredini, Pinamonti, and Serra Cassano showed that intrinsic Lipschitz graphs can be approximated by smooth graphs.
\begin{thm}[{\cite[Thm.\ 1.7]{CMPSC}}]\label{thm:CMPSC approx}
  Let $f\from V_0\to \R$ be an intrinsic Lipschitz function and let $\omega \subset V_0$ be a bounded open set. Then there exists a sequence of functions $f_k\in C^\infty(\omega)$ such that
  \begin{enumerate}
  \item $f_k\to f$ uniformly in $\omega$,
  \item $|\nabla_{f_k}f_k(v)|\le \|\nabla_ff\|_{L_\infty(\omega)}$ for all $v\in \omega$, and
  \item $\nabla_{f_k}f_k(v)\to \nabla_ff(v)$ for $\mu$--a.e.\ $v\in \omega$.
  \end{enumerate}
\end{thm}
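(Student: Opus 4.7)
This is an approximation statement that must preserve both a sup-norm bound on the nonlinear intrinsic gradient and produce pointwise a.e.\ convergence. The difficulty is the nonlinearity of $\nabla_f f = (\partial_x - f\partial_z)f$: a plain Euclidean mollification $f_\epsilon = f * \rho_\epsilon$ with $\rho_\epsilon$ a standard mollifier on $V_0$ produces a DiPerna--Lions-style commutator. Indeed, using the distributional identity $\nabla_ff = \partial_x f - \partial_z(f^2/2)$ from Definition~\ref{def:distrib gradient},
$$\nabla_{f_\epsilon}f_\epsilon - (\nabla_ff) * \rho_\epsilon = \tfrac{1}{2}\partial_z\bigl((f^2)*\rho_\epsilon - f_\epsilon^2\bigr),$$
which tends to $0$ in $L^1_{\mathrm{loc}}$ but is not pointwise controlled by $\|\nabla_ff\|_{L_\infty(\omega)}$. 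This naive approach would yield (1) and, along a subsequence, (3), but not the pointwise bound (2).

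My plan is to replace the Euclidean mollification by a Heisenberg-adapted one. First, extend $f$ to an entire intrinsic Lipschitz function using \cite[Thm.~27]{NY18} or \cite{Rigot}, and form the subgraph $E_f = \{p\cdot Y^t : p\in V_0,\, t<f(p)\} \subset \HH$. Convolve $\one_{E_f}$ with a nonnegative, smooth, compactly supported Heisenberg mollifier $\eta_\epsilon$ on $\HH$, and set $u_\epsilon = \one_{E_f}*\eta_\epsilon$. For $\epsilon$ small enough (relative to $\omega$), the function $u_\epsilon$ is smooth with $Y$-derivative bounded below by a positive constant on any compact set of $\langle Y\rangle$-cosets meeting $\omega$, because $E_f$ is a subgraph along these cosets and $\eta_\epsilon$ is concentrated near $\mathbf{0}$. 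The implicit function theorem then defines $f_\epsilon \in C^\infty(\omega)$ by $u_\epsilon(p\cdot Y^{f_\epsilon(p)}) = 1/2$, giving a family of smooth intrinsic graphs $\Gamma_{f_\epsilon}$.

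Property (1) follows from the uniform convergence $u_\epsilon \to \one_{E_f}$ on compact sets away from $\Gamma_f$ together with the continuity of $f$, and property (3) follows from Lebesgue differentiation on $\HH$ applied to $\nabla_ff$ pulled back via $\Psi_f$. The heart of the argument is (2), which reflects the fact that Heisenberg convolution commutes with the left translations used to define the intrinsic graph structure. Concretely, an intrinsic Lipschitz condition of the form $|\nabla_ff|\le L$ is equivalent to $\Gamma_f$ satisfying a left-invariant intrinsic cone condition; this cone condition passes to any level set of $\one_{E_f}*\eta_\epsilon$, so $\Gamma_{f_\epsilon}$ inherits the same bound on its intrinsic slope.

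The main obstacle will be making the cone-preservation argument deliver the \emph{exact} bound $\|\nabla_{f_\epsilon} f_\epsilon\|_{L_\infty(\omega)} \le \|\nabla_ff\|_{L_\infty(\omega)}$ rather than merely a quasi-bound $C\|\nabla_ff\|_{L_\infty(\omega)}$ with $C>1$. To sharpen this, I would work instead with an anisotropically scaled family of Heisenberg mollifiers tailored to the splitting $V_0\oplus\langle Y\rangle$, so that the intrinsic gradient of $f_\epsilon$ at a point $p$ can be represented as an $\eta_\epsilon$-weighted average of the pointwise values of $\nabla_ff$ on a small Carnot--Carath\'eodory ball around $\Psi_f(p)$; Jensen's inequality then gives the sharp bound. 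The secondary technical issue is uniformity of the implicit function theorem near the boundary of $\omega$, which I would handle by working initially on a slightly larger open set $\omega' \Supset \omega$ and restricting at the end.
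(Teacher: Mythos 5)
The paper does not prove this theorem; it is quoted as \cite[Thm.~1.7]{CMPSC}, so there is no in-paper argument for your sketch to match. The route you propose --- forming the subgraph $E_f\subset\HH$, taking the group convolution $u_\epsilon=\eta_\epsilon\star\one_{E_f}$ with a Heisenberg mollifier, and reading $f_\epsilon$ off a level set of $u_\epsilon$ --- is a coherent geometric alternative to the CMPSC argument. On the merits, the sharp bound (2) is actually cleaner than your sketch suggests. Using the convolution $(\eta_\epsilon\star g)(q)=\int_\HH\eta_\epsilon(h)\,g(h^{-1}q)\,dh$, which commutes with the left-invariant fields $X,Y$, horizontal tangency of the level set gives $\nabla_{f_\epsilon}f_\epsilon(\Pi(q))=-Xu_\epsilon(q)/Yu_\epsilon(q)$ on $\Gamma_{f_\epsilon}$, and by \eqref{eq:dist grad} the numerator and denominator are $\eta_\epsilon$-weighted integrals over $\partial^*E_f$ of $\nu^X_{E_f}$ and $\nu^Y_{E_f}$ respectively. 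Since $\nu^Y_{E_f}>0$ and $|\nu^X_{E_f}|=|\nabla_ff|\,\nu^Y_{E_f}\le\|\nabla_ff\|_{L_\infty}\nu^Y_{E_f}$ on $\partial^*E_f$, the quotient inherits the same $L_\infty$ bound directly; no Jensen or anisotropic mollifier tuning is needed.

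What is genuinely missing from your sketch are the two routine-sounding but nontrivial steps: (i) a uniform lower bound on $Yu_\epsilon$ near the level set over $\omega'\Supset\omega$, which is what lets the implicit function theorem produce a well-defined smooth $f_\epsilon$ on $\omega$ --- you assert this follows from the cone condition but do not derive it, and it is exactly where the intrinsic Lipschitz structure must be used; and (ii) a real argument for (3). The intrinsic gradient $\nabla_{f_\epsilon}f_\epsilon$ is a nonlinear quotient of averages of the normal, not an average of $\nabla_ff$, so an appeal to Lebesgue differentiation applied to $\nabla_ff$ pulled back by $\Psi_f$ does not by itself produce pointwise a.e.\ convergence; one needs to use that $\nu_{E_f}$ has approximate continuity points $|\partial E_f|$-a.e.\ (from the rectifiability theory of \cite{FSSCRectifiability}) and then pass this through the quotient.
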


Finally, the following integration rules for $\nabla_f$ will be helpful.  
\begin{lemma}\label{lem:nabla f path rule}
  Let $U\subset V_0$ be a closed bounded set with piecewise smooth boundary and let $f,g \from U \to \R$ be functions which are smooth on the interior of $U$ and continuous on $\partial U$.  Suppose that $\partial_z f, \partial_z g, \nabla_fg\in L_1(U)$.
  Let $\alpha$ parametrize $\partial U$ in the positive direction.  Then
  $$\int_U \nabla_f g \ud \mu = \int_{\partial U} (fg, g)\cdot \ud \alpha + \int_U g \cdot \pd{f}{z}\ud \mu.$$
  If $\partial U$ consists of segments on which $g$ vanishes and characteristic curves of $\Gamma_f$, then the first term vanishes and 
  $$\int_U \nabla_f g \ud \mu = \int_U g \cdot \pd{f}{z}\ud \mu.$$ 
\end{lemma}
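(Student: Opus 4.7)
The plan is to rewrite $\nabla_f g$ as a planar divergence plus a pointwise term, apply the classical Green/divergence theorem in the $xz$--plane, and then identify the boundary integrand with $(fg,g)\cdot \ud \alpha$. Specifically, using the ordinary product rule $f\partial_z g = \partial_z(fg) - g\partial_z f$ on the interior of $U$, I would write
\begin{equation*}
  \nabla_f g = \partial_x g - f\partial_z g = \bigl(\partial_x g - \partial_z(fg)\bigr) + g\,\partial_z f = \div_{xz}(g,-fg) + g\,\partial_z f,
\end{equation*}
where $\div_{xz}$ is the ordinary two-dimensional divergence in the coordinates $(x,z)$ on $V_0$.

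Next I would apply the divergence theorem on $U\subset V_0$ to the planar vector field $F=(g,-fg)$. Parametrising $\partial U$ positively by $\alpha=(\alpha_x,\alpha_z)$, the outward normal is $n=(\alpha_z',-\alpha_x')/|\alpha'|$ and $\ud s = |\alpha'|\ud t$, so
\begin{equation*}
  \int_{\partial U} F\cdot n\ud s = \int_{\partial U}\bigl(g\alpha_z' + fg\alpha_x'\bigr)\ud t = \int_{\partial U}(fg,g)\cdot \ud \alpha.
\end{equation*}
Combining with the pointwise identity above yields the first displayed formula. The $L_1$ hypotheses on $\partial_z f$, $\partial_z g$, and $\nabla_f g$ ensure that each term in the volume integral is integrable, and the continuity of $f,g$ on $\partial U$ together with the piecewise smoothness of $\partial U$ make the boundary integral well defined.

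For the second assertion, I would analyse $(fg,g)\cdot\ud\alpha$ piece by piece. On segments where $g$ vanishes the integrand is identically zero. On a characteristic curve of $\Gamma_f$, the tangent in $V_0$ is parallel to $\nabla_f = \partial_x - f\partial_z$, i.e.\ to the vector $(1,-f)$, so
\begin{equation*}
  (fg,g)\cdot(1,-f) = fg - fg = 0,
\end{equation*}
and again the contribution vanishes. Summing over the pieces of $\partial U$ kills the boundary integral and leaves the stated simplification.

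The main technical obstacle is that $f$ and $g$ are assumed smooth only on the interior of $U$ and continuous up to $\partial U$, so the divergence theorem is not immediately applicable. I would handle this by exhausting $U$ by compact subdomains $U_\varepsilon\Subset U$ with piecewise smooth boundary such that $\partial U_\varepsilon\to \partial U$ in a controlled sense, apply Green's theorem on each $U_\varepsilon$ where everything is smooth, and then pass to the limit: the volume terms converge by dominated convergence using the $L_1$ hypotheses, while the boundary term converges using the uniform continuity of $f$ and $g$ on $\overline{U}$ together with the piecewise smoothness of $\partial U$. This exhaustion step, rather than the algebraic identity, is where the care is needed.
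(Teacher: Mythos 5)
Your proof is correct and follows essentially the same route as the paper: both rewrite $\nabla_f g$ via the identity $f\partial_z g = \partial_z(fg) - g\,\partial_z f$, apply Green's/divergence theorem in the $xz$--plane to the vector field $(g,-fg)$, and then handle the limited boundary regularity by exhausting $U$ from the inside and passing to the limit using the $L_1$ hypotheses. Your explicit check that the boundary integrand vanishes along characteristic curves (tangent $(1,-f)$) and on segments where $g=0$ is a useful elaboration of a step the paper states without comment.
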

\begin{proof}
  On any subset $\omega\subset U$ with piecewise smooth boundary $\beta$, Green's Theorem and integration by parts imply that
  \begin{align*}
    \int_\omega \nabla_f g\ud \mu 
    &= \int_\omega \pd{g}{x} \ud \mu - \int_\omega f \cdot \pd{g}{z}\ud \mu \\
    &= \int_\omega \pd{g}{x} - \partial_z[fg] \ud \mu + \int_\omega g \cdot \pd{f}{z}\ud \mu \\
    &= \int_{\partial \omega} (f g, g)\cdot \ud \beta + \int_U g \cdot \pd{f}{z}\ud \mu.
  \end{align*}
  We have $\partial_z[fg]\in L_1(U)$ and $\partial_xg = \nabla_fg+f\partial_zg\in L_1(U)$, so the lemma follows by letting $\omega\to U$. 
\end{proof}

As a corollary, we get an analogue of the integration by parts formula.
\begin{remark}
  In the corollary below and throughout this paper, we use square brackets to indicate the function acted on by a differential operator.  We use $\cdot$ as a low-precedence multiplication operator, so that $\nabla_fg\cdot h$ represents $(\nabla_fg) h$, not $\nabla_f[gh]$.
\end{remark}

\begin{cor}\label{cor:integration by parts}
  Let $U, f$, and $\alpha$ be as above and let $g, h\from U\to \R$ be smooth functions which are smooth on the interior of \(U\) and continuous on \(\partial U\), and satisfy $\partial_zf, \partial_z[gh], \nabla_fg, \nabla_fh \in L_1(U)$.  Then
  \begin{align*}
    \int_{U} \nabla_f g\cdot h \ud \mu 
    &= \int_{U} \nabla_f[gh] - \nabla_fh\cdot g \ud \mu \\ 
    &= \int_{\partial U} (f g h, g h)\cdot \ud \alpha + \int_U gh\cdot \partial_zf\ud\mu - \int_U \nabla_f h \cdot g \ud \mu.
  \end{align*}
  If \(\partial U\) consists of segments on which $gh$ vanishes and characteristic curves of \(\Gamma_f\), then
  $$\int_{U} \nabla_fg\cdot  h \ud \mu = \int_U gh\cdot \partial_zf  - g\cdot \nabla_f h \ud \mu.$$
\end{cor}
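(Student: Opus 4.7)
The plan is to reduce this to Lemma~\ref{lem:nabla f path rule} applied to the product $gh$, using the fact that $\nabla_f = \partial_x - f\partial_z$ is a first-order derivation and therefore satisfies the Leibniz rule
$$\nabla_f[gh] = h\cdot \nabla_f g + g\cdot \nabla_f h.$$
This identity already gives the first equality in the statement, rearranged as $\nabla_f g \cdot h = \nabla_f[gh] - \nabla_f h \cdot g$.

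Next I would integrate this identity over $U$ and apply Lemma~\ref{lem:nabla f path rule} with $gh$ playing the role of the function called $g$ in that lemma. To do so I need to verify the hypotheses of the lemma for $gh$: namely that $\partial_z f,\ \partial_z[gh],\ \nabla_f[gh] \in L_1(U)$. The first two are in the hypotheses of the corollary. For the third, since $U$ is compact and $g,h$ are continuous on $U$, both are bounded; combined with the given bounds $\nabla_f g, \nabla_f h \in L_1(U)$, the Leibniz decomposition above shows $\nabla_f[gh] \in L_1(U)$. The lemma then produces
$$\int_U \nabla_f[gh]\ud\mu = \int_{\partial U} (f\cdot gh,\ gh)\cdot \ud\alpha + \int_U gh\cdot \partial_z f\ud\mu,$$
and substituting into the rearranged Leibniz identity yields the second equality.

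The final claim is immediate: on any portion of $\partial U$ where $gh$ vanishes, the one-form $(f\cdot gh,gh)\cdot \ud\alpha$ is zero, and on any arc parametrized as a characteristic curve of $\Gamma_f$ one has $\alpha'_x = 1$ and $\alpha'_z = -f$ (by the definition of characteristic curves as integral curves of $\nabla_f = \partial_x - f\partial_z$), so
$$(f\cdot gh,\ gh)\cdot \alpha' = f\cdot gh\cdot 1 + gh\cdot(-f) = 0.$$
Thus the boundary contribution vanishes under the stated hypothesis, leaving the displayed simpler formula.

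There is no real obstacle: the only conceptual point is the Leibniz rule for $\nabla_f$ (which is automatic because $\nabla_f$ is a linear combination of coordinate derivatives), and the only technical point is the integrability check for $\nabla_f[gh]$, which uses compactness of $U$ together with continuity of $g,h$ to absorb them into $L_\infty$ factors.
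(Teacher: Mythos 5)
Your proposal is correct and is exactly the argument the paper implicitly intends: the corollary is presented immediately after Lemma~\ref{lem:nabla f path rule} without a written proof, and the only missing step is the Leibniz rule for the first-order operator $\nabla_f=\partial_x-f\partial_z$ followed by an application of that lemma to $gh$, together with the integrability check you supply and the observation that $(fgh,gh)$ annihilates the direction $(1,-f)$ tangent to characteristic curves.
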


\subsection{Automorphisms}

The Heisenberg group admits families of stretch and shear automorphisms.  For $a,b\in \R\setminus \{0\}$, the \emph{stretch automorphism} $s_{a,b}\from \HH\to \HH$ is given by
$$s_{a,b}(x,y,z)=(ax,by,abz).$$
When $a=b$, this acts as a scaling on $d$, i.e., $d(s_{t,t}(p),s_{t,t}(q))=td(p,q)$. The \emph{shear automorphism} $P_b\from \HH\to \HH$ is given by 
$$P_b(x,y,z)=(x,y+bx,z).$$
Both of these are group automorphisms of $\HH$ that send the horizontal distribution to itself and thus send horizontal curves to horizontal curves. In fact, they send cosets of $\langle Y\rangle$ to cosets of $\langle Y\rangle$, so the image of an intrinsic graph is an intrinsic graph. The following lemma holds.

\begin{lemma}\label{lem:graph autos}
  Let $\Gamma=\Gamma_f$ be the intrinsic graph of a function $f\from U\subset V_0\to \R$. Let $q\from \HH\to \HH$ be a stretch map, shear map, or left translation and let $\hat{q}\from V_0\to V_0$, $\hat{q}(v)=\Pi(q(v))$ be the map that $q$ induces on $V_0$.  Then there is a function $\hat{f}\from \hat{q}(U)\to \R$ such that $\Gamma_{\hat{f}}=q(\Gamma)$. 
  \begin{itemize}
  \item
    If $a, b\in \R\setminus \{0\}$ and $q=s_{a,b}$, then for all $v\in U$ , $\hat{f}(\hat{q}(v))=b f(v)$ and
    $$\nabla_{\hat{f}}\hat{f}(\hat{q}(v))=\frac{b}{a}\nabla_ff(v).$$
  \item
    If $b\in \R$ and $q=P_b$, then for all $v\in U$, $\hat{f}(\hat{q}(v))=f(v)+bx(v)$ and 
    $\nabla_{\hat{f}}\hat{f}(\hat{q}(v))=\nabla_ff(v)+b$.
  \item
    If $h\in \HH$ and $q(p)=hp$, then for all $v\in U$, $\hat{f}(\hat{q}(v))=f(v)+y(h)$ and $\nabla_{\hat{f}}\hat{f}(\hat{q}(v))=\nabla_ff(v)$.
  \end{itemize}
\end{lemma}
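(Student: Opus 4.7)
The plan is to verify each of the three statements by direct computation. For each automorphism $q$ in the lemma, two facts are needed: (i) $q$ sends cosets of $\langle Y\rangle$ to cosets of $\langle Y\rangle$, which guarantees that $q(\Gamma_f)$ is again an intrinsic graph $\Gamma_{\hat f}$ defined on $\hat q(U)$; and (ii) $q$ either is a group homomorphism or, in the translation case, commutes with right multiplication by $Y^{f(v)}$ in an explicit way. Combining these, I identify $\hat f$ from the defining relation
\[
  q(v\cdot Y^{f(v)}) = \hat q(v)\cdot Y^{\hat f(\hat q(v))}
\]
by comparing $y$--coordinates; the consistency of the $x$-- and $z$--coordinates is then a short computation using the Heisenberg group law and the formula \eqref{eq:def Pi} for $\Pi$.

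Concretely, for the stretch $s_{a,b}$ the homomorphism property gives $s_{a,b}(v\cdot Y^{f(v)}) = s_{a,b}(v)\cdot Y^{bf(v)}$ with $s_{a,b}(v)\in V_0$, so $\hat q(v) = s_{a,b}(v)$ and $\hat f(\hat q(v)) = bf(v)$. For the shear, a short group-law calculation gives $\hat q(x,0,z) = (x,0,z - bx^2/2)$ and $\hat f(\hat q(v)) = f(v)+bx$. For left translation by $h = (h_1,h_2,h_3)$, $\hat q$ is an affine map on $V_0$ (with a twist in $z$ coming from the Heisenberg correction) and $\hat f(\hat q(v)) = f(v)+h_2$.

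To obtain the gradient formulas I apply $\nabla_{\hat f} = \partial_{\hat x} - \hat f\,\partial_{\hat z}$ to $\hat f$, using the chain rule between the $(x,z)$ coordinates on $V_0$ and the $(\hat x,\hat z)$ coordinates on $\hat q(U)$. For the stretch this is immediate: $\partial_{\hat x}\hat f = (b/a)\partial_x f$ and $\partial_{\hat z}\hat f = (1/a)\partial_z f$, so the claim follows. For the shear, the parabolic change of variables $z = \hat z + b\hat x^2/2$ produces an extra $b\hat x\,\partial_z f$ term in $\partial_{\hat x}\hat f$; this cancels exactly against the contribution from the extra $b\hat x$ in $-\hat f\,\partial_{\hat z}\hat f$, leaving the clean $\nabla_f f + b$. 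For left translation, the twisted change of variables produces an extra $h_2\,\partial_z f$ in $\partial_{\hat x}\hat f$, which is cancelled by the $-h_2\,\partial_{\hat z}\hat f$ coming from the shift in $\hat f$, yielding $\nabla_{\hat f}\hat f = \nabla_f f$; this reflects the left-invariance of the vector field $\nabla_f$.

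The only real obstacle is the bookkeeping: one must keep careful track of the Heisenberg correction $\tfrac{xy'-yx'}{2}$ when composing $\Pi$ with $q$, and watch that the various cross-terms cancel. If $f$ is merely intrinsic Lipschitz rather than $C^1$, the formulas for $\hat f$ still hold pointwise, and the identities for $\nabla_{\hat f}\hat f$ are interpreted in the distributional sense of Definition~\ref{def:distrib gradient}; they can be verified either by a direct change of variables in the defining integral or by approximating $f$ by smooth functions via Theorem~\ref{thm:CMPSC approx} and passing to the limit, using that each right-hand side is a continuous linear expression in $\nabla_f f$.
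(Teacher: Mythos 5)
Your proposal is correct, but it takes a genuinely different route to the gradient formulas than the paper does. You identify $\hat f$ the same way (comparing $y$--coordinates in $q(v\cdot Y^{f(v)})=\hat q(v)\cdot Y^{\hat f(\hat q(v))}$), and you invoke the same reduction to the smooth case via Theorem~\ref{thm:CMPSC approx}. But for the gradient itself, you do a direct coordinate computation: write $\hat f$ as a function of $(\hat x,\hat z)$, apply $\nabla_{\hat f}=\partial_{\hat x}-\hat f\,\partial_{\hat z}$, and watch the cross-terms from the twisted change of variables cancel. The paper instead argues geometrically: $\nabla_ff(v)$ is the slope of the horizontal tangent line $L$ to $\Gamma$ at $\Psi_f(v)$, the automorphism $q$ carries $L$ to the horizontal tangent line to $q(\Gamma)$ at $\Psi_{\hat f}(\hat q(v))$, and the action of $s_{a,b}$, $P_b$, and left translation on slopes of horizontal lines is $S\mapsto \frac{b}{a}S$, $S\mapsto S+b$, and $S\mapsto S$ respectively. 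The paper's argument is shorter and dodges the bookkeeping of the parabolic/twisted change of variables (which you correctly identify as the main hazard of the direct route); your computation, on the other hand, verifies the identities without invoking the geometric characterization of $\nabla_ff$ as a tangent slope, and makes the cancellations explicit, which some readers may find more transparent. Both are sound.
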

\begin{proof}
  In all three cases, one can calculate that $q(vY^{f(v)})=\hat{q}(v)Y^{\hat{f}(v)}$, so $\Gamma_{\hat{f}}=q(\Gamma)$.

  By Theorem~\ref{thm:CMPSC approx}, it suffices to prove the formulas for $\nabla_{\hat{f}}\hat{f}$ when $f$ is smooth. Let $v\in V_0$ and let $L$ be the horizontal line tangent to $\Gamma$ at $\Psi_f(v)$; this line has slope $S=\nabla_ff(v)$. The image $q(L)$ is tangent to $q(\Gamma)$ at $q(\Psi_f(v))=\Psi_{\hat{f}}(\hat{q}(v))$, so $\nabla_{\hat{f}}\hat{f}(\hat{q}(v))$ is equal to the slope of $q(L)$. If $q=s_{a,b}$, then $q(L)$ has slope $\frac{b}{a}S$, if $q=P_b$, then $q(L)$ has slope $S+b$, and if $q(p)=hp$, then $q(L)$ has slope $S$, as desired.
\end{proof}

\subsection{Perimeter and divergence}\label{sec:perim div}
Let $C^1(\HH;\mathsf{A})$ be the set of $C^1$ horizontal vector fields. For $V\in C^1(\HH,\mathsf{A})$, $V=v_1X+v_2Y$, we define the \emph{horizontal divergence} of $V$ as $\div_{\HH} V=Xv_1+Yv_2$. For a measurable subset $E\subset \HH$, we say that $E$ has \emph{locally finite perimeter} if for any bounded open set $U\subset \HH$, 
$$|\partial E|(U) \eqdef \sup\left\{\int_E \div_\HH V\ud \cS^4\mid V\in C^1_c(U;\mathsf{A}), |V|\le 1\right\}<\infty.$$
We call $|\partial E|(U)$ the \emph{perimeter} of $E$ in $U$. Franchi, Serapioni, and Serra Cassano \cite{FSSCRectifiability} showed that if $E$ has locally finite perimeter, then $|\partial E|$ is a Radon measure, and there is a $|\partial E|$--measurable horizontal vector field $\nu_E$ (the \emph{horizontal inward unit normal} to $\partial E$) such that $|\nu_E(p)|=1$ for $|\partial E|$--a.e.\ $p$ and such that for every $V\in C^1_c(\HH;\mathsf{A})$,
\begin{equation}\label{eq:div thm}
  \int_E \div_\HH V\ud\cS^4 =\int_\HH \langle \nu_E,V\rangle \ud |\partial E|.
\end{equation}

The measure $|\partial E|$ is concentrated on a subset $\partial^*E\subset \partial E$ called the \emph{reduced boundary}. When $E$ has locally finite perimeter, $\partial^*E$ is contained in the \emph{measure-theoretic boundary} $\partial_{*,\cS^4}E$, where $p\in \partial_{*,\cS^4}E$ if and only if
$$\limsup_{r\to 0} \frac{\cS^4(E \cap B(p,r))}{\cS^4(B(p,r))}>0 \text{\qquad and \qquad} \limsup_{r\to 0} \frac{\cS^4(B(p,r)\setminus E)}{\cS^4(B(p,r))}>0.$$
By \cite{FSSCRectifiability},
\begin{equation}\label{eq:concentrate perimeter}
  |\partial E|= \cS^3\rstr \partial^*E = \cS^3\rstr \partial_{*,\cS^4}E.
\end{equation}

The perimeter measure can also be characterized in terms of BV functions. By \cite{GaroNhieuIsopSob}, $E$ has locally finite perimeter if and only if $\one_E$ is a locally BV function. In this case, the distributional gradient satisfies
\begin{equation}\label{eq:dist grad}
  \nabla \one_E=\nu_E |\partial E|.
\end{equation}
If $E,F\subset \HH$ have locally finite perimeter, then $E\cap F$ has locally finite perimeter, and $\nu_{E\cap F}$ agrees with $\nu_E$ or $\nu_F$ everywhere that both normals are defined.
\begin{lemma}\label{lem:intersection perimeter}
  Let $E, F\subset \HH$ have locally finite perimeter. Then $E\cap F$ has locally finite perimeter. There is a $\cS^3$--null set $K$ such that
  \begin{equation}\label{eq:rb inter}
    \partial^*(E\cap F) \setminus K \subset \partial^*E \cup \partial^* F.
  \end{equation}
  Then  $\nu_{E\cap F}(p)=\nu_F(p)$ for all $p\in \partial^*(E\cap F) \cap \partial^*F$ and
   $\nu_{E\cap F}(p)=\nu_E(p)$ for all $p\in \partial^*(E\cap F) \cap \partial^*E$.
\end{lemma}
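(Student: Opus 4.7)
The plan has three steps: prove local finite perimeter by a smooth-approximation and Leibniz argument, bound the reduced boundary via measure-theoretic boundaries, and identify the normals by blow-up.

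For the perimeter bound, I would approximate $\one_E$ and $\one_F$ in $L^1_{\mathrm{loc}}(\HH)$ by smooth functions $u_\epsilon, v_\epsilon$ with $0\le u_\epsilon, v_\epsilon \le 1$, $|\nabla_{\mathsf A} u_\epsilon|\cS^4 \rightharpoonup |\partial E|$, and $|\nabla_{\mathsf A} v_\epsilon|\cS^4 \rightharpoonup |\partial F|$; such BV approximations are standard in Carnot groups. For $V \in C^1_c(\HH;\mathsf A)$ with $|V|\le 1$, integration by parts and the horizontal Leibniz rule give
$$\int_\HH \div_\HH V \cdot u_\epsilon v_\epsilon \, d\cS^4 \;=\; -\int_\HH V \cdot (u_\epsilon \nabla_{\mathsf A} v_\epsilon + v_\epsilon \nabla_{\mathsf A} u_\epsilon)\, d\cS^4,$$
whose absolute value is bounded by $\int_{\supp V}|\nabla_{\mathsf A} u_\epsilon| + |\nabla_{\mathsf A} v_\epsilon|\, d\cS^4$. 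Letting $\epsilon\to 0$ gives $|\partial(E\cap F)|(U)\le |\partial E|(U)+|\partial F|(U)$ for every bounded open $U$, so $E\cap F$ has locally finite perimeter.

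For the reduced boundary inclusion, I would work with measure-theoretic boundaries. A direct density argument gives $\partial_{*,\cS^4}(E\cap F)\subset \partial_{*,\cS^4}E \cup \partial_{*,\cS^4}F$: if either $E$ or $F$ has $\cS^4$--density $0$ at $p$ then so does $E\cap F$, and if both have density $1$ at $p$ then so does $E\cap F$, via $B(p,r)\setminus (E\cap F) = (B(p,r)\setminus E)\cup(B(p,r)\setminus F)$. By \eqref{eq:concentrate perimeter}, the sets $\partial_{*,\cS^4}E\setminus \partial^*E$ and $\partial_{*,\cS^4}F\setminus \partial^*F$ are $\cS^3$--null, so I take $K$ to be their union; then $\partial^*(E\cap F)\setminus K \subset \partial_{*,\cS^4}(E\cap F)\setminus K\subset \partial^*E\cup \partial^*F$.

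For the normal identification at $p\in \partial^*(E\cap F)\cap \partial^*F$, I would blow up. By the Franchi--Serapioni--Serra Cassano blow-up theorem at reduced boundary points, under the intrinsic rescalings $q \mapsto s_{1/r, 1/r}(p^{-1}q)$, the indicators $\one_F$ and $\one_{E\cap F}$ converge in $L^1_{\mathrm{loc}}$ as $r\to 0$ to $\one_{H_F}$ and $\one_{H_{E\cap F}}$, where $H_F$ and $H_{E\cap F}$ are vertical half-spaces through $\mathbf 0$ with inward horizontal normals $\nu_F(p)$ and $\nu_{E\cap F}(p)$, respectively. Since the pointwise inequality $\one_{E\cap F}\le \one_F$ is preserved by $L^1_{\mathrm{loc}}$ limits, $H_{E\cap F}\subseteq H_F$ up to $\cS^4$--null sets; two vertical half-spaces through $\mathbf 0$ of density $1/2$ with this containment must coincide, so $\nu_{E\cap F}(p)=\nu_F(p)$. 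The case $p\in \partial^*(E\cap F)\cap \partial^*E$ is symmetric, using $E\cap F\subset E$; notice that this argument uses no information about whether $E$ itself has a reduced-boundary blow-up at $p$.

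The main obstacle is the blow-up step, which requires invoking the Franchi--Serapioni--Serra Cassano blow-up result at reduced boundary points; given that, the observation that a vertical half-space contained in another vertical half-space of the same density must equal it makes the conclusion immediate.
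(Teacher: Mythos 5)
Your proof is correct, and steps two and three reproduce the paper's argument almost exactly: the passage from measure-theoretic to reduced boundaries uses \eqref{eq:concentrate perimeter} to discard a $\cS^3$--null set, and the identification of $\nu_{E\cap F}$ with $\nu_F$ and $\nu_E$ is the same blow-up argument from \cite{FSSCRectifiability}, including the key observation that a vertical half-space through $\mathbf 0$ contained in another must equal it.

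The one genuine difference is the first step. You establish local finiteness of $|\partial(E\cap F)|$ by taking smooth approximations $u_\epsilon\to\one_E$, $v_\epsilon\to\one_F$ with convergent variation measures and applying the horizontal Leibniz rule, yielding the quantitative bound $|\partial(E\cap F)|(U)\le|\partial E|(U)+|\partial F|(U)$. The paper instead observes that $E\cap F=\{\one_E+\one_F>1\}$ is a superlevel set of the $\BV$ function $\one_E+\one_F$ and invokes the coarea formula (Theorem 5.2 of \cite{GaroNhieuIsopSob}). Your route is more elementary and gives the explicit perimeter estimate as a by-product; the paper's is shorter but delegates more to the coarea theorem. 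Both are standard. A small point in your favor: you prove the union inclusion $\partial_{*,\cS^4}(E\cap F)\subset\partial_{*,\cS^4}E\cup\partial_{*,\cS^4}F$, which is exactly what is needed for \eqref{eq:rb inter}; the paper states the stronger intersection inclusion $\partial_{*,\cS^4}(E\cap F)\subset\partial_{*,\cS^4}E\cap\partial_{*,\cS^4}F$, which actually fails in general (take $E=\HH$), though only the union is used downstream.
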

\begin{proof}
  Since $\one_E+\one_F\in \BV(\HH)$, the coarea formula (Theorem~5.2 of \cite{GaroNhieuIsopSob}), implies that $E\cap F=\{p\in \HH\mid \one_E(p)+\one_F(p)>1\}$ has locally finite perimeter. We have
  \begin{equation}\label{eq:mtb inter}
    \partial_{*,\cS^4}(E\cap F)\subset \partial_{*,\cS^4}E\cap \partial_{*,\cS^4}F,
  \end{equation}
  and the measure-theoretic and reduced boundaries of $E$, $F$, and $E\cap F$ agree up to a null set, so \eqref{eq:mtb inter} implies \eqref{eq:rb inter}.

  Let $p\in \partial^*(E\cap F) \cap \partial^*F$. For any $r>0$, let $(E\cap F)_r = s_{r^{-1},r^{-1}}\left(p^{-1}(E\cap F)\right)$, and let $F_r = s_{r^{-1},r^{-1}}(p^{-1}F)$. For $V\in \mathsf{A}$, let $P^+_V$ be the half-space
  $$P^+_V=\{h\in \HH \mid \langle V,\pi(h)\rangle>0\}.$$
  By Theorem~4.1 of \cite{FSSCRectifiability}, 
  $$\lim_{r\to 0} \one_{F_r} = \one_{P_{\nu_F(p)}^+} \text{\qquad and\qquad} \lim_{r\to 0} \one_{(E\cap F)_r} = \one_{P_{\nu_{E\cap F}(p)}^+}$$
  in $L_{1}^{\mathrm{loc}}(\HH)$.  Since $\lim_{r\to 0} \one_{(E\cap F)_r} \le \lim_{r\to 0} \one_{F_r}$, this implies $\nu_{E\cap F}(p)=\nu_{F}(p)$. Swapping $E$ and $F$, we find that $\nu_{E\cap F}(p)=\nu_E(p)$ for all $p\in \partial^*(E\cap F) \cap \partial^*E$.
\end{proof}

\section{Energy-minimizing surfaces and calibrations}\label{sec:calibrations} 
Let $f\from V_0\to \R$ be an intrinsic Lipschitz function.  Let $U\subset V_0$ be a bounded measurable subset.  We define the \emph{intrinsic Dirichlet energy} of \(f\) on \(U\) by
$$E_U(f)=\frac{1}{2}\int_{U} (\nabla_f f)^2 \ud \mu.$$
For $\Gamma=\Gamma_f$ and for any bounded open subset $W\subset \HH$, we define
$E_W(\Gamma)=E_{\Pi(W\cap \Gamma)}(f).$

In this section, we use calibrations to construct surfaces that minimize intrinsic energy subject to some boundary conditions. We first give a definition and some basic properties.

\begin{defn}
  Let $f\from V_0\to \R$ be an intrinsic Lipschitz function and let $U\subset V_0$ be an open subset.
  We say that $f$ is an \emph{energy-minimizing function} on $U$ if
  $$E_{U\cap B(p,r)}(f)\le E_{U\cap B(p,r)}(g)$$
  for every $p\in V_0$, $r>0$, and every intrinsic Lipschitz function \(g\) such that $f-g\in C^c_0(U\cap B(p,r))$.
  
  For an entire intrinsic Lipschitz graph $\Gamma=\Gamma_f$, we define the \emph{epigraph} of $\Gamma$ to be
  $$\Gamma^+=\{p Y^t\in \HH\mid p\in \Gamma, t\ge 0\}.$$
  We say that $\Gamma$ is an \emph{energy-minimizing graph} on an open subset $W\subset \HH$ if
  $$E_{W\cap B(p,r)}(\Gamma)\le E_{W\cap B(p,r)}(\Lambda)$$
  for every $p\in \HH$, $r>0$, and every entire intrinsic Lipschitz graph $\Lambda$ such that $\Gamma^+\symdiff \Lambda^+\Subset W\cap B(p,r)$.  Here, $S\symdiff T$ is the symmetric difference $S\symdiff T=(S\setminus T)\cup (T\setminus S)$.    
  In particular, $f$ is energy-minimizing on $U$ if and only if $\Gamma_f$ is energy-minimizing on $\Pi^{-1}(U)$. 
\end{defn}

While perimeter minimization is preserved by scalings and rotations, energy minimization is preserved by stretches and shears.
\begin{lemma}\label{lem:least-energy symmetry}
  Let $\Gamma=\Gamma_f\subset \HH$ be a graph that minimizes energy on an open set $W\subset \HH$ and let $h\from \HH \to \HH$ be a left-translation, stretch, or shear.  Then $h(\Gamma)$ minimizes energy on $h(W)$.
\end{lemma}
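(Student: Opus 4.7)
The plan is to transport any competitor of $h(\Gamma)$ back to a competitor of $\Gamma$ via $h^{-1}$, invoke energy-minimality of $\Gamma$, and push the resulting inequality forward by a change of variables on $V_0$. Lemma~\ref{lem:graph autos} shows that $h$ induces a bi-Lipschitz homeomorphism $\hat q\colon V_0\to V_0$, $\hat q(v)=\Pi(h(v))$, with constant Jacobian $J$ (equal to $1$ for translations and shears, and $|a^2b|$ for $s_{a,b}$), and provides the uniform transformation rule
\[
  \nabla_{\hat f}\hat f\bigl(\hat q(v)\bigr)=\alpha\,\nabla_ff(v)+\beta,
\]
with $(\alpha,\beta)=(1,0)$ for left translations, $(b/a,0)$ for stretches $s_{a,b}$, and $(1,b)$ for shears $P_b$.

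I would begin with an arbitrary competitor $\Lambda=\Gamma_g$ of $h(\Gamma)$ satisfying $h(\Gamma)^+\symdiff\Lambda^+\Subset h(W)\cap B(p,r)$, set $\Lambda'=h^{-1}(\Lambda)=\Gamma_{g^*}$, and check that $\Gamma^+\symdiff(\Lambda')^+\Subset W\cap h^{-1}(B(p,r))$ (the symmetric differences match up even when $h$ reverses the $Y$-direction, since taking complements in $\HH$ preserves $\symdiff$). The image $h^{-1}(B(p,r))$ is contained in some ball $B(p',r')$ (equal to the ball for translations and enlarged via the ball-box inequality \eqref{eq:ballbox} for stretches and shears), so $\Lambda'$ is a valid competitor for $\Gamma$ on $W\cap B(p',r')$, and minimality yields $E_D(f)\le E_D(g^*)$, where $D=\hat q^{-1}(V)$ and $V=\Pi(h(W)\cap B(p,r)\cap\Gamma_{\hat f})$. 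Substituting the transformation rule into the definition of $E_V$ and changing variables produces the identity
\[
  E_V(\hat f)-E_V(g)=J\alpha^2\bigl(E_D(f)-E_D(g^*)\bigr)+J\alpha\beta\int_D(\nabla_ff-\nabla_{g^*}g^*)\,\ud\mu,
\]
whose first term is $\le 0$. For translations and stretches, $\beta=0$, and the proof is complete.

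The main obstacle is the shear case ($\beta=b\ne 0$), where I need the correction integral to vanish. The key observation is that $\nabla_ff-\nabla_{g^*}g^*=\partial_x(f-g^*)-\partial_z\bigl(\tfrac12(f^2-(g^*)^2)\bigr)$ is a Euclidean divergence in the $(x,z)$-plane, and this structure is exactly what Definition~\ref{def:distrib gradient} encodes distributionally. Testing against a cutoff $\psi\in C^1_c(V_0)$ with $\psi\equiv 1$ on the compact set $\{f\ne g^*\}$, the definition gives
\[
  \int(\nabla_ff-\nabla_{g^*}g^*)\psi\,\ud\mu=\int\Bigl[-(f-g^*)\partial_x\psi+\tfrac12(f^2-(g^*)^2)\partial_z\psi\Bigr]\ud\mu,
\]
which vanishes because $\partial\psi\equiv 0$ on $\{f\ne g^*\}$. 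A separate application of the definition to test functions supported away from $\{f\ne g^*\}$ shows that $\nabla_ff-\nabla_{g^*}g^*$ is itself supported in $\{f\ne g^*\}\subset D$, so the left-hand side equals $\int_D(\nabla_ff-\nabla_{g^*}g^*)\,\ud\mu$, forcing it to be $0$.
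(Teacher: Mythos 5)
Your proof is correct and takes essentially the same route as the paper's: pull the competitor back through $h^{-1}$, use Lemma~\ref{lem:graph autos} to transform the energy integrand, and for shears invoke the fact that $\int\nabla_ff\,\ud\mu$ is unchanged under compactly supported perturbations of $f$ --- which is precisely the paper's Lemma~\ref{lem:average naf}, and which you reprove inline by the same cutoff argument. The only differences are presentational: you package translations, stretches, and shears into one $(\alpha,\beta)$-transformation formula rather than treating them as three cases, and you explicitly flag the $Y$-orientation-reversal subtlety that the paper leaves implicit.
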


We need the following calculation.
\begin{lemma}\label{lem:average naf}
  Let $U\subset V_0$ be a bounded open set and let $f,g\from V_0\to \R$ be intrinsic Lipschitz functions such that $f-g\in C^0_c(U)$. Then
  $$\int_U \nabla_f f \ud \mu = \int_U \nabla_g g\ud \mu.$$
\end{lemma}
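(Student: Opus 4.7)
The approach is to reduce the claim to the distributional definition of the intrinsic gradient (Definition~\ref{def:distrib gradient}). Morally, $\nabla_f f = \partial_x f - \tfrac{1}{2}\partial_z(f^2)$ is the Euclidean divergence on $V_0$ of the vector field $(f,-\tfrac{1}{2}f^2)$; since $f-g\in C^0_c(U)$, the pair of fields $(f,-\tfrac{1}{2}f^2)$ and $(g,-\tfrac{1}{2}g^2)$ agree outside a compact subset of $U$, so the plain divergence theorem would immediately give $\int_U(\nabla_f f-\nabla_g g)\ud\mu=0$. The obstacle is that $f$ and $g$ are only intrinsic Lipschitz, so one cannot invoke the divergence theorem directly or test the distribution $\nabla_f f$ against the characteristic function $\one_U$.

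To circumvent this, set $K=\supp(f-g)\Subset U$ and fix a cutoff $\psi\in C^\infty_c(U)$ with $\psi\equiv 1$ on an open neighbourhood $W$ of $K$. I would split
$$\int_U \nabla_f f \ud\mu \;=\; \int_U \psi\cdot\nabla_f f \ud\mu \;+\; \int_U (1-\psi)\cdot\nabla_f f\ud\mu$$
and show that each piece is unchanged when $f$ is replaced by $g$. For the first piece, Definition~\ref{def:distrib gradient} applied with the legitimate test function $\psi$ gives
$$\int_U \psi\cdot\nabla_f f\ud\mu \;=\; \int_U \Bigl(-f\,\partial_x\psi + \tfrac{f^2}{2}\partial_z\psi\Bigr)\ud\mu.$$
Because $\psi\equiv 1$ on $W\supset K$, both $\partial_x\psi$ and $\partial_z\psi$ vanish throughout $W$, hence are supported where $f=g$, and one may substitute $g$ for $f$ under the integral to recognise the right-hand side as $\int_U \psi\cdot\nabla_g g\ud\mu$.

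For the second piece, I would use the general principle that if two intrinsic Lipschitz functions agree on an open set $V\subset V_0$ then their intrinsic gradients agree almost everywhere on $V$: pairing both distributional identities against an arbitrary $\varphi\in C^1_c(V)$ uses only $f|_V=g|_V$, and since $\nabla_f f,\nabla_g g\in L_\infty^{\mathrm{loc}}$ this gives equality of functions a.e.\ on $V$. Taking $V=V_0\setminus K$ and noting that $(1-\psi)$ is supported there yields
$$\int_U (1-\psi)\cdot\nabla_f f\ud\mu \;=\; \int_U (1-\psi)\cdot\nabla_g g\ud\mu,$$
and summing the two identities gives the lemma. The only real subtlety is the passage from the distributional definition (which admits only $C^1_c$ test functions) to an integral against $\one_U$; the cutoff $\psi$ is exactly what is needed to bridge that gap, after which the replacement of $f$ by $g$ is automatic from $\supp(f-g)\Subset U$.
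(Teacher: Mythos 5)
Your proof is correct and is essentially the paper's argument: a smooth cutoff $\psi$ equal to $1$ near $\supp(f-g)$, the distributional definition of $\nabla_ff$ applied to the $\psi$-weighted piece, and locality of the intrinsic gradient on the open set where $f=g$ for the $(1-\psi)$-weighted piece. In fact you are slightly more careful than the published proof, which asks for a $\psi\in C^\infty(U)$ that equals $1$ on $S=\supp(f-g)$ and $0$ on $U\setminus S$ (impossible to arrange smoothly unless $S$ is both open and closed) and which leaves the locality step --- that $f=g$ on the open set $U\setminus S$ forces $\nabla_ff=\nabla_gg$ a.e.\ there --- entirely implicit.
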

\begin{proof}
  Let $S=\supp(f-g)\Subset U$ and let $\psi\in C^\infty(U)$ be a function such that $\psi(s)=1$ for all $s\in S$ and $\psi(u)=0$ for all $u\in U\setminus S$.  By Definition~\ref{def:distrib gradient},
  \begin{align*}
    \int_U \nabla_f f\ud \mu 
    &= \int_U \nabla_f f \cdot \psi \ud \mu + \int_U \nabla_f f \cdot (1-\psi) \ud \mu \\ 
    &= \int_{U} -f  \partial_x\psi + \frac{f^2}{2}\partial_z\psi\ud \mu + \int_U \nabla_f f \cdot (1-\psi) \ud \mu.
  \end{align*}
  Since $\psi$ is constant on $S$ and $\psi=1$ on $S$, both integrals are zero on $S$. Therefore, since $f=g$ on $U\setminus S$,
  $$\int_U \nabla_f f\ud \mu 
  = \int_{U\setminus S} -f  \partial_x\psi + \frac{f^2}{2}\partial_z\psi\ud \mu + \int_{U\setminus S} \nabla_f f \cdot (1-\psi) \ud \mu=\int_U \nabla_g g\ud \mu,$$
  as desired.
\end{proof}

Now we prove the lemma.
\begin{proof}[{Proof of Lemma~\ref{lem:least-energy symmetry}}]
  It suffices to show that for every bounded open set $U\subset W$ and every intrinsic Lipschitz graph  $\Lambda$ such that $\Lambda^+\symdiff h(\Gamma^+)\Subset h(U)$, we have $E_{h(U)}(h(\Gamma))\le E_{h(U)}(\Lambda)$.

  When $h$ is a translation and $\Lambda^+\symdiff h(\Gamma^+)\Subset h(U)$, the minimality of $\Gamma$ implies
  $$E_{h(U)}(h(\Gamma))=E_U(\Gamma)\le E_U(h^{-1}(\Lambda)) =E_{h(U)}(\Lambda),$$
  as desired.
  
  Suppose that $h=s_{a,b}$ for some $a,b\ne 0$. By Lemma~\ref{lem:graph autos}, for any intrinsic Lipschitz graph $\Sigma=\Gamma_\phi$, we have
  \begin{equation}\label{eq:stretch energy}
    E_{h(U)}(h(\Sigma)) = \frac{b^2}{a^2} \frac{\mu(h(U))}{\mu(U)} E_U(\Sigma) = \frac{b^2}{a}E_U(\Sigma).
  \end{equation}
  If $\Lambda^+\symdiff h(\Gamma^+)\Subset h(U)$, the minimality of $\Gamma$ implies
  $$E_{h(U)}(h(\Gamma))=\frac{b^2}{a}E_U(\Gamma)\le \frac{b^2}{a} E_U(h^{-1}(\Lambda)) =E_{h(U)}(\Lambda).$$

  Now suppose that $h$ is a shear, i.e., $h=P_b$ for some $b\in \R$. Let $\Lambda=\Gamma_{\hat{g}}$ be an intrinsic Lipschitz graph such that $\Lambda^+\symdiff h(\Gamma^+)\Subset h(W)$. Let $g$ be the intrinsic Lipschitz function such that $h^{-1}(\Lambda)=\Gamma_g$. Then $\Gamma_g^+\symdiff \Gamma^+\Subset W$, and for any $v\in V_0$, either $f(v)=g(v)$ (in which case $\Psi_f(v)=\Psi_g(v)$) or the horizontal segment from $\Psi_f(v)$ to $\Psi_g(v)$ is contained in $W$. In either case, $\Psi_f(v)\in W$ if and only if $\Psi_{g}(v)\in W$, i.e., $\Pi(W\cap \Gamma)=\Pi(W\cap \Gamma_g)$. Let $D_0=\Pi(W\cap \Gamma)$ and let $D$ be a bounded open subset such that $D_0\Subset D$ and thus $f-g\in C^0_c(D)$. The minimality of $f$ implies that $E_D(f)\le E_D(g)$.

  Let $\hat{h}\from V_0\to V_0$,
  $$\hat{h}(x,0,z)=\Pi(h(x,0,z))=\big(x,0,z-\frac12 bx^2\big).$$
  This is measure-preserving on $V_0$, and by Lemma~\ref{lem:graph autos}, for any   intrinsic Lipschitz graph $\Sigma=\Gamma_\phi$, there is a $\hat{\phi}$ such that $h(\Sigma)=\Gamma_{\hat{\phi}}$ and
  $$\nabla_{\hat{\phi}}\hat{\phi}(v)=\nabla_\phi \phi(\hat{h}^{-1}(v))+b$$
  for all $v\in V_0$. Thus
  \begin{align*} 
    E_{\hat{h}(D)}(\hat{\phi})
    &=\int_{\hat{h}(D)} \left(\nabla_\phi\phi(\hat{h}^{-1}(v))+b\right)^2 \ud \mu(v)\\
    &=\int_{D} \nabla_\phi\phi(v)^2+ 2b \nabla_\phi\phi(v) +b^2 \ud \mu(v)\\
    &=E_{D}(\phi) + b^2 \mu(D) + 2b\int_{D} \nabla_\phi \phi(v) \ud \mu(v).
  \end{align*}

  Since $\hat{f}=\hat{g}$ except on $\hat{h}(D_0)$,
  $$E_{h(W)}(\Lambda) - E_{h(W)}(h(\Gamma)) = E_{\hat{h}(D_0)}(\hat{f}) - E_{\hat{h}(D_0)}(\hat{g}) = E_{\hat{h}(D)}(\hat{f}) - E_{\hat{h}(D)}(\hat{g}).$$
  By the calculation above, Lemma~\ref{lem:average naf}, and the minimality of $f$,
  \begin{align*}
    E_{\hat{D}}(\hat{f}) & = E_{D}(f) + b^2 \mu(D) + 2b\int_{D} \nabla_f f(v) \ud \mu(v)\\
                         &\le E_{D}(g) + b^2 \mu(D) + 2b\int_{D} \nabla_g g(v) \ud \mu(v)\\
                         &=E_{\hat{D}}(\hat{g}),
  \end{align*}
  so $E_{h(W)}(\Gamma)\le E_{h(W)}(h(\Lambda))$, as desired.
\end{proof}

Furthermore, we can use stretch automorphisms to relate the energy of a surface to its perimeter.
\begin{lemma}\label{lem:stretch limit energy}
  Let $\Lambda=\Gamma_g$ be an intrinsic Lipschitz graph. Let $W\subset \HH$ be a bounded open set and let $D=\Pi(W\cap \Lambda)$. Then, as $r\to \infty$, 
  \begin{equation}\label{eq:stretch limit energy}
    \cS^3\left(s_{r,r^{-1}}(W\cap \Lambda)\right)=r\mu(D) + r^{-3}E_W(\Lambda) + O(r^{-7}),
  \end{equation}
  where the implicit constant is bounded by a function of $\mu(D)$ and the intrinsic Lipschitz constant of $\Lambda$.
\end{lemma}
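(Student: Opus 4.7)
The plan is to apply the area formula \eqref{eq:energy area coarse} to the stretched graph and Taylor expand the integrand in the small parameter $r^{-4}|\nabla_g g|^2$. First I would identify the function whose intrinsic graph equals $s_{r,r^{-1}}(\Lambda)$: by Lemma~\ref{lem:graph autos} applied with $a=r$, $b=r^{-1}$, one has $s_{r,r^{-1}}(\Lambda) = \Gamma_{\hat g}$ where $\hat g(\hat q(v)) = r^{-1} g(v)$ and, crucially,
$$\nabla_{\hat g}\hat g(\hat q(v)) = r^{-2}\nabla_g g(v),$$
with $\hat q := \Pi\circ s_{r,r^{-1}}|_{V_0}$. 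The map $\hat q$ sends $(x,0,z)\mapsto(rx,0,z)$, so its Jacobian on $V_0$ is $r$, and in particular $\mu(\hat q(D)) = r\mu(D)$.

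Next I would note that since $s_{r,r^{-1}}$ preserves cosets of $\langle Y\rangle$, the set $s_{r,r^{-1}}(W\cap\Lambda)$ is exactly the part of $\Gamma_{\hat g}$ lying over $\hat q(D)$. The area formula \eqref{eq:energy area coarse} then gives
$$\cS^3\bigl(s_{r,r^{-1}}(W\cap\Lambda)\bigr) = \int_{\hat q(D)} \sqrt{1+|\nabla_{\hat g}\hat g(w)|^2}\,\ud\mu(w).$$
Changing variables $w=\hat q(u)$ (Jacobian $r$) and substituting the formula for $\nabla_{\hat g}\hat g$, this becomes
$$\cS^3\bigl(s_{r,r^{-1}}(W\cap\Lambda)\bigr) = r\int_D \sqrt{1+r^{-4}\bigl(\nabla_g g(u)\bigr)^2}\,\ud\mu(u).$$

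Finally I would Taylor expand. Since $\Lambda$ is intrinsic Lipschitz, $\|\nabla_g g\|_{L_\infty}$ is bounded by a constant $M$ depending only on the intrinsic Lipschitz constant of $\Lambda$. Hence for $r$ sufficiently large, $r^{-4}(\nabla_g g)^2 \le 1$, and the elementary inequality $|\sqrt{1+t}-1-t/2|\le t^2$ for $0\le t\le 1$ yields
$$\sqrt{1+r^{-4}(\nabla_g g)^2} = 1 + \tfrac{1}{2}r^{-4}(\nabla_g g)^2 + O\bigl(r^{-8}(\nabla_g g)^4\bigr).$$
Integrating and multiplying by $r$,
$$\cS^3\bigl(s_{r,r^{-1}}(W\cap\Lambda)\bigr) = r\mu(D) + \tfrac{1}{2}r^{-3}\int_D(\nabla_g g)^2\,\ud\mu + O\bigl(r^{-7}\mu(D)M^4\bigr),$$
and the middle term is precisely $r^{-3}E_W(\Lambda)$ by the definition of intrinsic Dirichlet energy and the identity $E_W(\Lambda) = E_D(g)$. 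I do not anticipate a real obstacle here; the whole argument is a controlled Taylor expansion, with the $L_\infty$ bound on $\nabla_g g$ coming free from the intrinsic Lipschitz hypothesis.
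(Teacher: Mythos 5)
Your proof is correct and is essentially the same as the paper's: both apply the area formula \eqref{eq:energy area coarse} to $s_{r,r^{-1}}(\Lambda)$, use the scaling $\nabla_{\hat g}\hat g = r^{-2}\nabla_g g$ from Lemma~\ref{lem:graph autos} together with the Jacobian factor $r$ for the change of variables, and Taylor expand $\sqrt{1+r^{-4}(\nabla_g g)^2}$ with the uniform $L_\infty$ bound on $\nabla_g g$ supplying the error estimate. You spell out a few steps (the explicit change of variables and the elementary $|\sqrt{1+t}-1-t/2|\le t^2$ bound) that the paper leaves implicit, but the route is the same.
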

\begin{proof}
  Let $g_r(x,z) = r^{-1}g(r^{-1}x, z)$, so that $s_{r,r^{-1}}(\Lambda) = \Gamma_{g_r}$.  
  Then
  $$\nabla_{g_r} g_r(v) = r^{-2}\nabla_{g_r} g_r(s_{r,r^{-1}}(v)),$$ and
  \begin{align*}
    \cS^3\left(s_{r,r^{-1}}(W\cap \Lambda)\right) 
    &= \int_{s_{r,r^{-1}(D)}} \sqrt{1+|\nabla_{g_r} g_r|^2} \ud \mu \\ 
    &= \int_{D} r\sqrt{1+r^{-4}|\nabla_{g} g|^2} \ud \mu\\
    &= r\int_{D} 1+\frac{r^{-4}}{2}|\nabla_{g} g|^2+O(r^{-8}|\nabla_{g} g|^4) \ud \mu\\
    &= r\mu(D) + r^{-3}E_W(\Lambda) + O(r^{-7}\mu(D) \sup_{v\in D}|\nabla_{g} g(v)|^4),
  \end{align*}
  as desired.
\end{proof}

This lets us construct energy-minimizing surfaces by stretching $H$--minimal surfaces.  We start with the $H$--minimal surfaces with a singularity along the $x$--axis constructed in \cite{PaulsHMinimal}.  These surfaces were proved to be area-minimizing on any bounded set in \cite[Example~7.1]{ChengHwangYang}, \cite{MSCVnegative}.
\begin{lemma}\label{lem:herringbone}
  Let $a\in \R\setminus \{0\}$ and let $f_a(x,z)=-a \sqrt{z} \sign(z),$
  where $\sign(z)=1,0,$ or $-1$ depending on whether $z$ is positive, zero, or negative.  Let $\Gamma=\Gamma_{f_a}$.  Then $\Gamma$ minimizes energy on any bounded open set $W\subset \HH$.
\end{lemma}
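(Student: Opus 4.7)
The plan is to upgrade the area-minimality of the family $\{\Gamma_{f_b}\}_{b\ne 0}$ cited from \cite{PaulsHMinimal, ChengHwangYang, MSCVnegative} to the energy-minimality of $\Gamma=\Gamma_{f_a}$ by exploiting the stretch automorphisms $s_{r,r^{-1}}$ together with Lemma~\ref{lem:stretch limit energy}, which expresses the leading $r^{-3}$ correction to the stretched area as the intrinsic Dirichlet energy. The key observation from Lemma~\ref{lem:graph autos} is that $s_{r,r^{-1}}(\Gamma_{f_a})=\Gamma_{f_{a/r}}$, so $\Gamma$ is linked by an entire family of stretches to graphs that are area-minimizing.

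Given a bounded open $U\subset \HH$ (we can take $U=W\cap B(p,r_0)$ from the definition of energy-minimizing graph) and an entire intrinsic Lipschitz competitor $\Lambda=\Gamma_g$ with $\Gamma^+\symdiff \Lambda^+\Subset U$, set $\Gamma_r=s_{r,r^{-1}}(\Gamma)=\Gamma_{f_{a/r}}$, $\Lambda_r=s_{r,r^{-1}}(\Lambda)$, and $U_r=s_{r,r^{-1}}(U)$, for $r>1$. Since $s_{r,r^{-1}}$ is a homeomorphism, $\Gamma_r^+\symdiff \Lambda_r^+\Subset U_r$, and the area-minimality of $\Gamma_{f_{a/r}}$ on the bounded set $U_r$ yields $\cS^3(U_r\cap \Gamma_r)\le \cS^3(U_r\cap \Lambda_r)$. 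I would then apply Lemma~\ref{lem:stretch limit energy} to each side to obtain the asymptotic expansions
\begin{align*}
  \cS^3(U_r\cap \Gamma_r) &= r\,\mu(D_\Gamma)+r^{-3}E_U(\Gamma)+O(r^{-7}),\\
  \cS^3(U_r\cap \Lambda_r) &= r\,\mu(D_\Lambda)+r^{-3}E_U(\Lambda)+O(r^{-7}),
\end{align*}
where $D_\Gamma=\Pi(U\cap \Gamma)$ and $D_\Lambda=\Pi(U\cap \Lambda)$, with implicit constants controlled uniformly in $r$ by the (fixed) intrinsic Lipschitz constants of $\Gamma,\Lambda$ and by $\mu(D)$.

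The step I expect to be the main obstacle is verifying that the leading $r\mu(D)$ terms cancel, i.e.\ that $\mu(D_\Gamma)=\mu(D_\Lambda)$; without this, the area inequality tells us nothing about the $r^{-3}$ coefficients. Let $S=\Pi(\overline{\Gamma^+\symdiff \Lambda^+})$, which is compact in $V_0$. Off $S$ the functions $f_a$ and $g$ agree, so $\Gamma$ and $\Lambda$ coincide and therefore $D_\Gamma\setminus S=D_\Lambda\setminus S$. For $v\in S$, the hypothesis $\Gamma^+\symdiff \Lambda^+\Subset U$ forces both endpoints $\Psi_{f_a}(v)$ and $\Psi_g(v)$ of the vertical column over $v$ to lie in $U$, since they are in $\overline{\Gamma^+\symdiff \Lambda^+}\subset U$; hence $S\subset D_\Gamma\cap D_\Lambda$, and so $D_\Gamma=D_\Lambda$. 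With this cancellation in hand, subtracting the two expansions gives $r^{-3}\bigl(E_U(\Gamma)-E_U(\Lambda)\bigr)+O(r^{-7})\le 0$, that is, $E_U(\Gamma)-E_U(\Lambda)\le O(r^{-4})$; sending $r\to\infty$ yields $E_U(\Gamma)\le E_U(\Lambda)$, completing the proof.
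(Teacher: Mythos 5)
Your proof is correct and is essentially the argument the paper gives: stretch $\Gamma=\Gamma_{f_a}$ by $s_{r,r^{-1}}$ to produce another area-minimizer $\Gamma_{f_{a/r}}$, expand the stretched areas of $\Gamma$ and the competitor $\Lambda$ via Lemma~\ref{lem:stretch limit energy}, and let $r\to\infty$ to isolate the $r^{-3}$ energy coefficient. The one point you work out in detail — that $\Pi(U\cap\Gamma)=\Pi(U\cap\Lambda)$ so the leading $r\mu(D)$ terms cancel — is used implicitly in the paper's proof (and stated explicitly only later, in the proof of Proposition~\ref{prop:calibrations}); your argument that $\Gamma^+\symdiff\Lambda^+\Subset U$ forces equality of the projections is a reasonable way to fill that in.
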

We call these surfaces \emph{herringbone surfaces}, after the arrangement of their horizontal curves. For any $a>0$, $\Gamma=\Gamma_{f_a}$ can be written as the  union of two horizontal rays coming out of each point of the $x$--axis with slope $\pm \frac{a^2}{2}$
\begin{proof}
  Since $\Gamma$ is an area-minimizing surface, if $W$ is a bounded open set and $E$ is a finite-perimeter set such that $E\symdiff \Gamma^+\subset W$, then $\Per_E(W)\ge \Per_{\Gamma^+}(W)$.  In fact, for any $r>0$, we have $s_{r,r^{-1}}(\Gamma)=\Gamma_{f_{r^{-1}a}}$, so $s_{r,r^{-1}}(\Gamma)$ is also area-minimizing.

  Let $\Lambda=\Gamma_g$ be an intrinsic Lipschitz graph such that $\Lambda^+\symdiff \Gamma^+\Subset W$.  The minimality of $s_{r,r^{-1}}(\Gamma)$ implies that for any $r>0$,
  $$\cS^3\left(s_{r,r^{-1}}(W\cap \Gamma)\right)\le \cS^3\left(s_{r,r^{-1}}(W\cap \Lambda)\right).$$
  By Lemma~\ref{lem:stretch limit energy},
  $$\cS^3\left(s_{r,r^{-1}}(W\cap \Lambda)\right)-\cS^3\left(s_{r,r^{-1}}(W\cap \Gamma)\right) = r^{-3}\big(E_W(\Lambda)-E_W(\Gamma)\big) + O(r^{-7}).$$
  This is non-negative for all $r>0$, so as $r$ goes to infinity, we get $E_W(\Gamma)\le E_W(\Lambda)$, as desired.
\end{proof}

One can also prove the minimality of $\Gamma$ using calibrations. In fact, Lemma~\ref{lem:herringbone} is a special case of Theorem~\ref{thm:constructEnergy}; if $K=\{\pm \frac{a^2}{2}\}$, then $\Gamma_{f_a}=\Lambda_{K}$. In the context of minimal surfaces, a calibration is a field of unit vectors with zero divergence. We will adjust this definition for the case of harmonic graphs.

Let $V$ be a Borel horizontal vector field. For an open set $U\subset \HH$ and a finite-perimeter subset $E\subset \HH$, we define the \emph{flux} of $V$ through $\partial E$ as
$$\cF_U(E,V)\eqdef \int_U \langle \nu_E,V\rangle \ud |\partial E|.$$
We say that $V$ is \emph{conservative} on $U$ if $\cF_U(E,V)=0$ for every finite-perimeter subset $E\Subset U$. This implies, in particular, that the distributional divergence of $V$ is zero.

The following proposition gives a calibration condition for proving that a graph is energy minimizing.
\begin{prop}\label{prop:calibrations}
  Let $\Gamma=\Gamma_f$ be an intrinsic Lipschitz graph and let $W\subset \HH$ be an open set.

  Suppose there is a locally bounded Borel function $\tau\from W\to \R$ such that $\tau(p)=\nabla_ff(\Pi(p))$ for almost every $p\in W\cap \Gamma$.  Suppose further that the vector field
  \begin{equation}\label{eq:barM parabola}
    \ovln{M}(p) =-\tau(p) X_{p} + \bigg(1-\frac{\tau(p)^2}{2}\bigg)Y_{p}
  \end{equation}
  is conservative on $W$.  Then $\Gamma$ is energy-minimizing on $W$. 
\end{prop}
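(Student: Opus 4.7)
The plan is to follow the classical calibration argument: compute $\cF_W(\cdot, \ovln M)$ on $\Gamma^+$ and on an arbitrary competitor $\Lambda^+$, extract the energies via a pointwise inequality, and use conservativeness of $\ovln M$ to equate the two fluxes. Concretely, for any entire intrinsic Lipschitz $\Lambda = \Gamma_g$, I would pull the flux back to $V_0$ via $\Psi_g$, using the area formula \eqref{eq:energy area coarse} and the horizontal inward unit normal $\nu_{\Lambda^+} \circ \Psi_g = (-\nabla_g g\, X + Y)/\sqrt{1 + (\nabla_g g)^2}$ to obtain
\[
\cF_W(\Lambda^+, \ovln M) = \int_{\Pi(W \cap \Lambda)} \Bigl(1 + \nabla_g g \cdot \tau_g - \tfrac{1}{2}\tau_g^2\Bigr) \ud\mu,
\]
where $\tau_g(v) := \tau(\Psi_g(v))$. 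The elementary bound $(\nabla_g g - \tau_g)^2 \ge 0$ rearranges to $1 + \nabla_g g \cdot \tau_g - \tfrac12 \tau_g^2 \le 1 + \tfrac12 (\nabla_g g)^2$ pointwise, with equality iff $\tau_g = \nabla_g g$. Hence $\cF_W(\Lambda^+, \ovln M) \le \mu(\Pi(W \cap \Lambda)) + E_W(\Lambda)$ for every $\Lambda$, and the hypothesis gives equality for $\Lambda = \Gamma$.

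For a competitor with $\Gamma^+ \symdiff \Lambda^+ \Subset W' := W \cap B(p, r)$, I would next establish $\cF_{W'}(\Gamma^+, \ovln M) = \cF_{W'}(\Lambda^+, \ovln M)$ via the distributional identity \eqref{eq:dist grad}. Setting $F := \Gamma^+ \setminus \Lambda^+$ and $F' := \Lambda^+ \setminus \Gamma^+$, both compactly contained in $W'$, the identity $\one_{\Gamma^+} - \one_{\Lambda^+} = \one_F - \one_{F'}$ gives, as vector-valued Radon measures,
\[
\nu_{\Gamma^+}|\partial \Gamma^+| - \nu_{\Lambda^+}|\partial \Lambda^+| = \nu_F|\partial F| - \nu_{F'}|\partial F'|.
\]
Pairing both sides against the locally bounded Borel field $\ovln M$ and invoking conservativeness on $F, F'$ then yields the flux equality.

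Finally, a short observation gives $\mu(\Pi(W' \cap \Gamma)) = \mu(\Pi(W' \cap \Lambda))$: whenever $f(v) \ne g(v)$, the $Y$--segment from $\Psi_f(v)$ to $\Psi_g(v)$ lies entirely in $\Gamma^+ \symdiff \Lambda^+ \subset W'$, forcing both endpoints into $W'$; if $f(v) = g(v)$ the memberships are tautologically equivalent. Chaining the three identities gives $E_{W'}(\Gamma) \le E_{W'}(\Lambda)$, as required. The substantive step is the second one: since neither $\Gamma^+$ nor $\Lambda^+$ is itself compactly contained in $W$, conservativeness cannot be applied directly to them, and the argument has to be localized via the compactly supported function $\one_{\Gamma^+} - \one_{\Lambda^+}$, with a careful use of the intersection properties of the reduced boundaries (Lemma~\ref{lem:intersection perimeter}) to justify the equality of vector-valued measures above.
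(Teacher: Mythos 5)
Your proof is correct but takes a genuinely different and more direct route than the paper's. Both proofs establish the flux equality $\cF_{W'}(\Gamma^+,\ovln{M})=\cF_{W'}(\Lambda^+,\ovln{M})$ by the same distributional decomposition (the paper uses $S=\Lambda^+\setminus\Gamma^+$, $T=\Gamma^+\setminus\Lambda^+$ in the roles of your $F'$, $F$), but they convert flux to energy in different ways. The paper stretches by $s_{r,r^{-1}}$, observes that $\ovln{M}_r\to$ a unit field as $r\to\infty$, applies the classical area-calibration bound $\cS^3(W_r\cap\Lambda_r)\ge \cF_{W_r}(\Lambda_r^+,\ovln{M}_r)/\sup|\ovln{M}_r|$, expands both sides to order $r^{-3}$ via Lemma~\ref{lem:stretch limit energy}, and extracts the energy comparison from the $r^{-3}$ coefficient; this also requires verifying that $\ovln{M}_r$ remains conservative under stretching. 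You instead observe the pointwise convexity inequality $1+\nabla_gg\cdot\tau_g-\tfrac12\tau_g^2\le 1+\tfrac12(\nabla_gg)^2$ (the integrand $\tfrac12\lambda^2$ lies above its tangent at $\tau_g$), which gives $\cF_{W'}(\Lambda^+,\ovln{M})\le\mu(\Pi(W'\cap\Lambda))+E_{W'}(\Lambda)$ directly, with equality for $\Gamma$. Your route is the natural anisotropic calibration argument and eliminates the stretch machinery entirely; the paper's route has the advantage of making explicit that the energy calibration is a rescaled limit of the area calibration, consistent with the rest of the section. One small remark: you cite Lemma~\ref{lem:intersection perimeter} to justify the equality of vector-valued measures, but that identity is just linearity of the distributional gradient applied to $\one_{\Gamma^+}-\one_{\Lambda^+}=\one_F-\one_{F'}$; the lemma's real role is only to ensure $F$ and $F'$ have locally finite perimeter so their gradients are Radon measures.
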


The proof of Proposition~\ref{prop:calibrations} is based on Lemma~\ref{lem:stretch limit energy} and the following formula. Recall that for an intrinsic Lipschitz graph $\Gamma=\Gamma_f$, the \emph{unit horizontal normal} at a point $p\in \Gamma$ is given by
$$\nu_\Gamma(p)=\frac{-\nabla_ff(\Pi(p)) X_p + Y_p}{\sqrt{1+\nabla_ff(\Pi(p))^2}}.$$
Let
$$M_{\Gamma}(p)=-\nabla_ff(\Pi(p)) X_p + \left(1 - \frac{\nabla_ff(\Pi(p))^2}{2}\right)Y_p.$$
Then $|M_\Gamma(p)-\nu_\Gamma(p)| \lesssim |\nabla_ff(\Pi(p))|^3$. If $M$ is as in Proposition~\ref{prop:calibrations}, then $M_{\Gamma}(p)=M(p)$ almost everywhere on $W\cap \Gamma$.

Let $W \subset \HH$ be an open set, and let $D=\Pi(W\cap \Gamma)$. Let $V$ be a horizontal vector field $V$. By \eqref{eq:energy area coarse},
\begin{multline}\label{eq:M energy}
  \cF_W(\Gamma^+,M_\Gamma) = \int_{W} \langle M_\Gamma, \nu_{\Gamma} \rangle \ud |\partial \Gamma^+| = \int_{D} \big\langle M_\Gamma\left(\Psi_f(v)\right), - \nabla_ff(v) X_{\Psi_f(v)} + Y_{\Psi_f(v)}\big\rangle \ud \mu(v)\\
  = \int_{D} \nabla_ff(v)^2 + 1 -\frac{1}{2}\nabla_ff(v)^2 \ud \mu(v) = \mu(D) + E_W(\Gamma).
\end{multline}

\begin{proof}[{Proof of Proposition~\ref{prop:calibrations}}]
  Without loss of generality, we may take $W$ to be a bounded open set and $\tau$ to be bounded on $W$. Let $\Lambda=\Gamma_g$ be an entire intrinsic Lipschitz graph such that $\Gamma^+\symdiff \Lambda^+ \Subset W.$
  Let $\Gamma_r=s_{r,r^{-1}}(\Gamma)$ and $\Lambda_r=s_{r,r^{-1}}(\Lambda)$, and let $f_r, g_r\from V_0\to \R$ be the functions such that $\Gamma_{f_r}=\Gamma_r$ and $\Gamma_{g_r}=\Lambda_r$.
  Since $\Gamma^+\symdiff \Lambda^+ \Subset W$, we have $\Pi(W\cap \Lambda)=\Pi(W\cap \Gamma)$; let $D=\Pi(W\cap \Gamma)$ and $D_r=s_{r,r^{-1}}(D)$.

  Let $W_r=s_{r,r^{-1}}(W) $. For $p\in W_r$, let $q=s_{r,r^{-1}}^{-1}(p)$, let 
  $\tau_r(p) = r^{-2} \tau(q),$
  and let $\ovln{M}_r(p) =-\tau_r(p) X_{p} + \big(1-\frac{\tau_r(p)^2}{2}\big)Y_{p}$. 
  For almost every $p\in W_r\cap \Gamma_r$, 
  $$\tau_r(p) = r^{-2} \nabla_f f(\Pi(q)) = \nabla_{f_r} f_r(\Pi(p)),$$
  so $\ovln{M}_r=M_{\Gamma_r}$ almost everywhere on $W_r\cap \Gamma_r$. 
  By \eqref{eq:stretch energy} and \eqref{eq:M energy},
  $$\cF_{W_r}(\Gamma_r^+,\ovln{M}_r) = \mu(D_r) + E_{W_r}(\Gamma_r) = r \mu(D) + r^{-3} E_{W}(\Gamma).$$

  We claim that $\ovln{M}_r$ is conservative. We have
  \begin{align*}
    (s_{r,r^{-1}})_*(\ovln{M}(q)-Y_q) 
    &= - \tau(q) r X_p + \frac{1}{2} \tau(q)^2 r^{-1}Y_p = r^3(\ovln{M}_r(p)-Y_p),
  \end{align*}
  so 
  \begin{equation}\label{eq:barM def}
    \ovln{M}_r=r^{-3} (s_{r,r^{-1}})_*(\ovln{M}-Y) + Y.
  \end{equation}

  Let $E$ be a finite-perimeter set. For any horizontal vector field $V$ and any $|\partial E|$--measurable subset $S\subset \partial^*E$, we have
  $$\int_S \langle \nu_E,V\rangle \ud |\partial E| = \int_{s_{r,r^{-1}}(S)} \langle \nu_{s_{r,r^{-1}}(E)} , (s_{r,r^{-1}})_*(V) \rangle \ud |\partial s_{r,r^{-1}}(E)|.$$
  This follows from a calculation when $S$ is a subset of an $\HH$--regular hypersurface as in \cite{FSSCRectifiability}, and by the Main Theorem of \cite{FSSCRectifiability}, $\partial E$ can be written as a union of compact subsets of $\HH$--regular hypersurfaces, up to a $\cS^3$--null set. Therefore,
  $$\cF_{W_r}(s_{r,r^{-1}}(E), (s_{r,r^{-1}})_*(V))=\cF_{W}(E, V),$$
  and if $E\Subset W$, the conservativity of $M$ implies
  \begin{multline*}
    \cF_{W_r}(s_{r,r^{-1}}(E), \ovln{M}_r)  = r^{-3}\cF_{W_r}(s_{r,r^{-1}}(E), (s_{r,r^{-1}})_*(\ovln{M}-Y)) + \cF_{W_r}(s_{r,r^{-1}}(E), Y) \\
    = r^{-3}\cF_{W}(E, \ovln{M}-Y) + \cF_{W}(E, r Y) = 0.
  \end{multline*}
  Therefore, $\ovln{M}_r$ is conservative.
  
  Now, we calculate $\cF_{W_r}(\Lambda_r^+, \ovln{M}_r)$ and bound $\cS^3(W_r\cap \Lambda_r)$ from below. 
  Let $S=\Lambda_r^+\setminus \Gamma_r^+$ and $T=\Gamma_r^+\setminus \Lambda_r^+$. These sets have finite perimeter by Lemma~\ref{lem:intersection perimeter}, and 
  $\one_{\Lambda_r^+} - \one_{\Gamma_r^+} = \one_{S} -  \one_{T}.$
  Taking distributional gradients, equation \eqref{eq:dist grad} implies
  $$\nu_{\Lambda_r^+} |\partial \Lambda_r^+| - \nu_{\Gamma_r^+} |\partial \Gamma_r^+| = \nu_{S} |\partial S| - \nu_{T} |\partial T|$$
  as vector-valued Radon measures, and thus
  \begin{multline*}
    \cF_{W_r}(\Lambda_r^+, \ovln{M}_r) = \cF_{W_r}(\Gamma_r^+, \ovln{M}_r) + \cF_{W_r}(S, \ovln{M}_r) - \cF_{W_r}(T, \ovln{M}_r)\\
    = \cF_{W_r}(\Gamma_r^+, \ovln{M}_r) = r \mu(D) + r^{-3} E_{W}(\Gamma).
  \end{multline*}
  It follows that
  \begin{equation}\label{eq:bound flux lambda}
    \cS^3(W_r\cap \Lambda_r) \ge \frac{\cF_{W_r}(\Lambda_r^+, \ovln{M}_r)}{\sup_{q\in W_r} |\ovln{M}_r(q)|} =\frac{r \mu(D) + r^{-3} E_{W}(\Gamma)}{\sup_{q\in W_r} |\ovln{M}_r(q)|}.
  \end{equation}
  Let $t=\sup_{p\in W} |\tau(p)|$. By \eqref{eq:barM parabola}, for $q\in W_r$,
  $$|\ovln{M}_r(q)| = \sqrt{\tau_r(q)^2 + 1 - \tau_r(q)^2 + \frac{\tau_r(q)^4}{4}} 
  = 1 + O(r^{-8}t^4),$$
  so for sufficiently large $r$,
  \begin{equation}\label{eq:calibration energy}
    \cS^3(W_r\cap \Lambda_r) \ge \frac{r \mu(D) + r^{-3} E_{W}(\Gamma)}{1 + O(r^{-8}t^4)} = r \mu(D) + r^{-3} E_{W}(\Gamma) + O(r^{-7}),
  \end{equation}
  where the last implicit constant depends on $D$, $t$, and $\Gamma$.
  By Lemma~\ref{lem:stretch limit energy},
  \begin{equation}\label{eq:backref sle}
    \cS^3(W_r\cap \Lambda_r)=r\mu(D) + r^{-3}E_W(\Lambda) + O(r^{-7}).
  \end{equation}
  Comparing \eqref{eq:calibration energy} and \eqref{eq:backref sle}, we find $E_W(\Lambda)\ge E_W(\Gamma)$, as desired.
\end{proof}

Thus, we can construct energy minimizers by constructing conservative vector fields of the form \eqref{eq:barM parabola}.

\begin{example}\label{ex:ruled}
  We say that a \emph{ruled surface} is a surface in $\HH$ foliated by horizontal line segments. Let  $\Sigma$ be a $C^2$ ruled $Z$--graph, that is, a ruled surface of the form $\Sigma=\{(x,y,g(x,y))\mid x,y\in \Omega\}$ where $\Omega\subset \R^2$ is open and $g\in C^2(\Omega)$.  The results of \cite{ChengHwangYangMalchiodi} show that $\Sigma$ is an area minimizer.

  Suppose that the slopes of the horizontal lines making up $\Sigma$ are bounded, so that $\Sigma=\Gamma_f$ is locally intrinsic Lipschitz. We claim that $\Sigma$ is an energy minimizer. For every $q=(x,y) \in \Omega$, there is a unique point $p=(x,y,g(x,y))\in \Sigma$ and a unique maximal horizontal line segment $\widetilde{L}_{p}$ through $p$ that is contained in $\Sigma$. Let $L_{q}=\pi(\widetilde{L}_{p})$ and let $\sigma(q)=\nabla_ff(\Pi(p))$ be the slope of $L_{q}$. For any $q'\in L_q$, the uniqueness of $L_q$ implies that $L_{q'}=L_q$, so $\sigma$ is constant along lines of the form $L_{q}$. 

  Let $W=\pi^{-1}(\Omega)$ and for $w\in W$, let $\tau(w)=\sigma(\pi(w))$. Then $\tau(p)=\nabla_ff(\Pi(p))$ for every $p\in\Sigma$.
  Define $\ovln{M}$ as in \eqref{eq:barM parabola}. This is a $C^1$ vector field. For any $p\in W$ and $q=\pi(p)$, 
  $$\div_\HH \ovln{M}(p) = -X[\tau](p) - \tau(p) Y[\tau](p) = -(\partial_x +\sigma(q) \partial_y)[\sigma](q).$$
  The vector $\partial_x +\sigma(q) \partial_y$ is tangent to $L_q$ and $\sigma$ is constant along $L_q$, so $\div_\HH \ovln{M}(p)=0$, i.e., $\ovln{M}$ is conservative. By Proposition~\ref{prop:calibrations}, $\Sigma$ is energy-minimizing.
\end{example}

We can construct more examples by gluing together $C^1$ fields. Nicolussi Golo and Ritoré \cite{GR2020} gave a construction of piecewise $C^1$ vector fields with zero distributional divergence, and a similar construction produces conservative vector fields.
\begin{prop}[{see \cite[Prop.\ 2.5]{GR2020}}] \label{prop:GR conserv}
  Let $U\subset \HH$ be an open set. Let $\{\Omega_j\}_j$ be a family of disjoint subsets of $U$ with locally finite perimeter such that $\cS^3(\partial \Omega_j\setminus \partial^*\Omega_j)=0$, $U=\bigsqcup_j \Omega_j$, and $\{\overline{\Omega_j}\}_j$ has locally finite multiplicity.
  For each $j$, let $V_j\in C^1(\overline{\Omega_j};\mathsf{A})$. Let
  $$V=\sum_j V_j \one_{\Omega_j}.$$
  Suppose that for each $j$, $\div_\HH V_j=0$ on $\Omega_j$ and for $\cS^3$--a.e.\ $p\in \partial \Omega_j$,
  \begin{equation}\label{eq:conserv discont}
    \langle \nu_{\Omega_j}(p), V(p)\rangle=\langle \nu_{\Omega_j}(p), V_j(p)\rangle.
  \end{equation}
  Then $V$ is conservative.
\end{prop}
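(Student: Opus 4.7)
The plan is to establish conservativity by decomposing $E$ along the partition $\{\Omega_j\}$, applying the divergence theorem cell by cell, and cancelling interface contributions via \eqref{eq:conserv discont}. Fix a finite-perimeter set $E\Subset U$. Because $\overline{E}$ is compact and $\{\overline{\Omega_j}\}$ has locally finite multiplicity, only finitely many cells meet $\overline{E}$. For each such cell, Lemma~\ref{lem:intersection perimeter} gives that $E_j:=E\cap\Omega_j$ has locally finite perimeter and, modulo an $\cS^3$--null set, $\partial^*E_j\subset\partial^*E\cup\partial^*\Omega_j$, with $\nu_{E_j}=\nu_E$ on $\partial^*E_j\cap\partial^*E$ and $\nu_{E_j}=\nu_{\Omega_j}$ on $\partial^*E_j\cap\partial^*\Omega_j$. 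Since $V_j\in C^1(\overline{\Omega_j};\mathsf{A})$ with $\div_\HH V_j=0$ on $\Omega_j$, I would extend $V_j$ to a $C^1$ horizontal field on a neighborhood of $\overline{E_j}$, multiply by a $C^1_c$ cutoff equal to $1$ on $\overline{E_j}$, and apply the divergence formula \eqref{eq:div thm} to obtain
\[
0=\int_{\partial^*E_j}\langle\nu_{E_j},V_j\rangle\,\ud|\partial E_j|.
\]

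Summing these identities over the finitely many relevant $j$ and regrouping by the decomposition of $\partial^*E_j$,
\[
0=\sum_j\int_{\partial^*E_j\cap\partial^*E}\langle\nu_E,V_j\rangle\,\ud|\partial E|\ +\ \sum_j\int_{\partial^*E_j\cap\partial^*\Omega_j}\langle\nu_{\Omega_j},V_j\rangle\,\ud|\partial\Omega_j|=:I_1+I_2.
\]
Since $E\Subset U=\bigsqcup_k\Omega_k$, every point of $\partial^*E$ lies in a unique $\Omega_k$ (up to an $\cS^3$--null set of interface points, which I handle with the same pairing argument as for $I_2$), and on that cell $V=V_k$; hence $I_1=\cF_U(E,V)$. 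It therefore suffices to prove $I_2=0$.

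For $I_2$ I would pair contributions on two-sided interfaces. The locally finite multiplicity of $\{\overline{\Omega_j}\}$ together with a standard blowup argument for finite-perimeter sets in $\HH$ (as in \cite{FSSCRectifiability}) shows that for $\cS^3$--a.e.\ $p\in\bigcup_j\partial^*\Omega_j$ there is a unique cell $\Omega_k\ni p$ and a unique other cell $\Omega_j$ with $p\in\partial^*\Omega_j$, and moreover $\nu_{\Omega_k}(p)=-\nu_{\Omega_j}(p)$; higher-order junction points, where three or more cells meet, form an $\cS^3$--null set. At such a two-sided interface, since $V(p)=V_k(p)$ from the $\Omega_k$--side, condition \eqref{eq:conserv discont} applied to $\Omega_j$ unpacks as $\langle\nu_{\Omega_j},V_j\rangle(p)=\langle\nu_{\Omega_j},V_k\rangle(p)$, so the $j$--integrand at $p$ equals $-\langle\nu_{\Omega_k},V_k\rangle(p)$, exactly cancelling the $k$--integrand. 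Summed over interface points this yields $I_2=0$, and hence $\cF_U(E,V)=0$.

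The main obstacle is the interface bookkeeping in $I_2$: one must interpret \eqref{eq:conserv discont} at a boundary point $p\in\partial\Omega_j$ as an equality of one-sided normal components between $V_j$ and the neighboring field $V_k$, and verify that for $\cS^3$--a.e.\ point of $\bigcup_j\partial^*\Omega_j$ the neighboring cell $\Omega_k$ is unique so that this pairing is well-defined. Both points rest on the locally finite multiplicity hypothesis and on the rectifiability of reduced boundaries in the Carnot--Carath\'eodory geometry of $\HH$.
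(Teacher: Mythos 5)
Your proposal is in the right spirit but takes a genuinely more circuitous route than the paper's, and in doing so it introduces steps that the paper's formulation makes unnecessary. Both arguments begin identically: fix $E\Subset U$, set $E_j=E\cap\Omega_j$, invoke Lemma~\ref{lem:intersection perimeter} for the finite perimeter of $E_j$ and the compatibility of normals, and use $\div_\HH V_j=0$ to get $\int_{\partial^*E_j}\langle\nu_{E_j},V_j\rangle\,\ud|\partial E_j|=0$. The difference is what happens next. The paper observes that this is already \emph{exactly} $\cF_U(E_j,V)$, because $\langle\nu_{E_j},V\rangle=\langle\nu_{E_j},V_j\rangle$ $\cS^3$--a.e.\ on $\partial^*E_j$: at points of $\Omega_j$ this is the definition of $V$, and at points of $\partial^*\Omega_j$ it is \eqref{eq:conserv discont} combined with $\nu_{E_j}=\nu_{\Omega_j}$. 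Then $\cF_U(E,V)=\sum_j\cF_U(E_j,V)=0$ from the decomposition $\nabla\one_E=\sum_j\nabla\one_{E_j}$. There is no pairing and no need to identify the ``other side'' of any interface.

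You instead split $\int_{\partial^*E_j}\langle\nu_{E_j},V_j\rangle$ into a $\partial^*E$--part $I_1$ and a $\partial^*\Omega_j$--part $I_2$, and argue $I_2=0$ by cancelling neighboring cells. This works, but it is doing extra labor: you must show that $\cS^3$--a.e.\ point of $\bigcup_j\partial^*\Omega_j$ is a two--sided interface with a unique neighbor and opposite normals (a blowup argument), and you must track which cell $\Omega_k$ contains the point $p$ so as to unpack \eqref{eq:conserv discont} as an equality between $V_j$ and $V_k$. The paper avoids all of this precisely because \eqref{eq:conserv discont} is stated in terms of the global field $V$, so the hypothesis already packages the neighbor information; one never needs to know which cell lies across the interface. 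There are also two smaller bookkeeping points you would need to address in your version: the sets $\partial^*E_j\cap\partial^*E$ and $\partial^*E_j\cap\partial^*\Omega_j$ need not be disjoint, so $0=I_1+I_2$ requires a genuinely disjoint decomposition such as $\partial^*E_j=(\partial^*E_j\cap\partial^*E)\sqcup(\partial^*E_j\cap\partial^*\Omega_j\setminus\partial^*E)$ up to null sets; and the identification $I_1=\cF_U(E,V)$ requires the further claim that $\cS^3$--a.e.\ $p\in\partial^*E$ lies in $\partial^*E_k$ for exactly one $k$. Both are repairable but are exactly the sort of set--measure bookkeeping the paper's reorganization sidesteps.
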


\begin{proof}
  Let $E\Subset U$ be a finite-perimeter set and for each $j$, let $E_j=E\cap \Omega_j$. By Lemma~\ref{lem:intersection perimeter}, $E_j$ has finite perimeter.
  We claim that $\cF_U(E_j,V)=\cF_U(E_j,V_j) = 0$. It suffices to show that \begin{equation}\label{eq:equal flux cond}
    \langle \nu_{E_j}(p), V(p)\rangle=\langle \nu_{E_j}(p), V_j(p)\rangle
  \end{equation}
  for $\cS^3$--a.e.\ $p\in \partial^*E_j$.

  We have $\partial^*E_j\subset \overline{E_j}\subset \overline{\Omega_j}$, so for all $p\in \partial^*E_j$ except a $\cS^3$--null set, we have $p\in \Omega_j$ or $p\in \partial^*\Omega_j$. If $p\in \Omega_j$, then $V(p)=V_j(p)$, and \eqref{eq:equal flux cond} holds.

  Otherwise, $p\in \partial^*E_j \cap \partial^*\Omega_j$. Lemma~\ref{lem:intersection perimeter} implies $\nu_{E_j}(p)=\nu_{\Omega_j}(p)$, and \eqref{eq:conserv discont} implies that $\langle \nu_{E_j}(p), V(p)\rangle=\langle \nu_{E_j}(p), V_j(p)\rangle$
  for $\cS^3$--a.e.\ $p\in \partial^*E_j\cap \partial^* \Omega_j$. Thus, by \eqref{eq:div thm},
  $$\cF_U(E_j,V)=\cF_U(E_j,V_j) = \int_{E_j} \div_\HH V_j\ud \cS^4=0.$$
  By \eqref{eq:dist grad}, 
  $$\nu_E|\partial E|=\nabla \one_E=\sum_j \nabla \one_{E_j}=\sum_j \nu_E|\partial E|$$
  as vector-valued Radon measures, so 
  $$\cF_U(E,V)=\int_U \langle V, \nu_E \rangle \ud|\partial E| = \sum_j \int_U \langle V, \nu_{E_j} \rangle \ud|\partial E|_j = \sum_j \cF_U(E_j,V)=0.$$
\end{proof}

When $V=\ovln{M}$ is as in Proposition~\ref{prop:calibrations}, Condition \eqref{eq:conserv discont} can be written in terms of $\tau$. Let $\Omega^+,\Omega^-\subset \HH$ be locally finite-perimeter sets and let $p\in \partial^* \Omega^-\cap \partial^* \Omega^+$. Then
$$\nu_{\Omega^+}(p)=-\nu_{\Omega^-}(p)=\frac{-\sigma X + Y}{\sqrt{1+\sigma^2}}$$
for some $\sigma\in \R$. Let $\tau^\pm\in C^1(\overline{\Omega^\pm})$ and let
$$V^\pm =-\tau^\pm X + \bigg(1-\frac{(\tau^\pm)^2}{2}\bigg)Y.$$
Then 
\begin{multline*}
  \langle -\sigma X + Y, V^+(p)-V^-(p)\rangle
   = \sigma (\tau^+(p)-\tau^-(p)) - \frac{1}{2}\left(\tau^+(p)^2 -\tau^-(p)^2\right)\\
   = (\tau^+(p)-\tau^-(p))\bigg(\sigma  - \frac{\tau^+(p)^2 + \tau^-(p)^2}{2}\bigg).
\end{multline*}
That is, \eqref{eq:conserv discont} is satisfied at $p$ if and only if $\tau^+(p)=\tau^-(p)$ or
\begin{equation}
\label{eq:tau conserv}
  \sigma = \frac{\tau^+(p)+\tau^-(p)}{2}.
\end{equation}
This is analogous to the equal-angle condition studied in \cite{ChengHwangYang}.

We can thus construct energy-minimizing surfaces by gluing ruled surfaces along singularities. We will need a criterion that ensures that a union of horizontal curves is an intrinsic Lipschitz graph.
\begin{lemma}\label{lem:blarg}
  Let $c>0$ and let
  $$C=\Big\{q\in \HH\mid |y(q)|> \max\big\{ 4c|x(q)|, \sqrt{32c |z(q)|}\big\}\Big\}$$
  be a scale-invariant double cone in $\HH$. 
  For $i=1,2$, let $\alpha_i\from [0,t_i]\to \HH$ be unit $x$--speed horizontal curves such that $\alpha_1(0)=\alpha_2(0)$ and $\Lip(y\circ\alpha_i)<c$. Suppose further that the projections $\pi\circ \alpha_i$ do not cross, i.e., $y(\alpha_1(t))\le y(\alpha_2(t))$ for all $t\in [0,\min(t_1,t_2)]$ or $y(\alpha_1(t))\ge y(\alpha_2(t))$ for all $t\in [0,\min(t_1,t_2)]$. Let $p_i=\alpha_i(t_i)$.  Then $p_1^{-1}p_2\not \in C$.
\end{lemma}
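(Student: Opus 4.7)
The plan is to reduce to the normalized situation $\alpha_1(0)=\alpha_2(0)=\mathbf{0}$ using left-invariance of $\HH$ and of the cone $C$, and then, by the symmetry $q\mapsto q^{-1}$ (which fixes $C$ setwise), to assume $t_1\le t_2$ and $y_1(s)\le y_2(s)$ for all $s\in[0,t_1]$. Writing $y_i(s):=y(\alpha_i(s))$ and using that each $\alpha_i$ is horizontal with unit $x$-speed, one has $z(\alpha_i(s))=\tfrac{s\,y_i(s)}{2}-\int_0^s y_i(u)\,du$, so direct multiplication in $\HH$ yields
\begin{equation*}
  z(p_1^{-1}p_2)=\tfrac{\Delta x\,(y_1(t_1)+y_2(t_2))}{2}-\int_0^{t_1}(y_2-y_1)\,ds-\int_{t_1}^{t_2}y_2\,ds,
\end{equation*}
where $\Delta x:=t_2-t_1$ and $\Delta y:=y_2(t_2)-y_1(t_1)$.

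If $|\Delta y|\le 4c\,\Delta x$, the first defining inequality of $C$ immediately places $p_1^{-1}p_2$ outside $C$. Otherwise I aim to prove $|\Delta y|^2\le 32c\,|z(p_1^{-1}p_2)|$. Setting $Y_1:=y_2(t_1)-y_1(t_1)\ge 0$, the Lipschitz bound $|y_2(t_2)-y_2(t_1)|\le c\,\Delta x$ combined with $|\Delta y|>4c\,\Delta x$ forces $Y_1>3c\,\Delta x$. The non-negative function $h(s):=y_2(s)-y_1(s)$ on $[0,t_1]$ satisfies $h(0)=0$, $h(t_1)=Y_1$, and $|h'|\le 2c$, so integrating the pointwise minorant $h(s)\ge\max(Y_1-2c(t_1-s),0)$ gives the key estimate $\int_0^{t_1} h\,ds\ge Y_1^2/(4c)$.

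Expanding $\int_{t_1}^{t_2}y_2\,ds$ via $y_2(s)=y_2(t_1)+(y_2(s)-y_2(t_1))$ and absorbing all Lipschitz remainders of size at most $c(\Delta x)^2/2$ into a single error $E$ with $|E|\le c(\Delta x)^2$, the $z$-formula collapses to $z(p_1^{-1}p_2)=-\int_0^{t_1}h\,ds-\tfrac{Y_1\Delta x}{2}+E$. Since $\Delta x\le Y_1/(3c)$ gives $c(\Delta x)^2\le Y_1^2/(9c)$, the expression is forced negative and $|z(p_1^{-1}p_2)|\ge Y_1^2/(4c)-Y_1^2/(9c)=5Y_1^2/(36c)$. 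Combined with $|\Delta y|\le Y_1+c\Delta x\le 4Y_1/3$, this yields $32c\,|z(p_1^{-1}p_2)|\ge 40Y_1^2/9\ge|\Delta y|^2$, placing $p_1^{-1}p_2\notin C$. The main obstacle is bookkeeping the constants: the factor of two in $|h'|\le 2c$, the quadratic error $c(\Delta x)^2$, and the cross term $\tfrac{Y_1\Delta x}{2}$ all chip away at the reserve $Y_1^2/(4c)$, and the stated thresholds $4c$ and $\sqrt{32c}$ seem calibrated precisely so that the chain of inequalities closes with a small margin.
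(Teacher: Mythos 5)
Your argument is correct and follows essentially the same plan as the paper's proof: reduce by symmetry to $t_1\le t_2$ and $y_1\le y_2$, compute the $z$-coordinate of $p_1^{-1}p_2$ via the integral formula $z(\alpha_i(s))=\tfrac{s\,y_i(s)}{2}-\int_0^s y_i$, and show that the no-crossing hypothesis together with the Lipschitz bound forces $|z|$ to be large relative to $|\Delta y|^2$ whenever $|\Delta y|>4c\,\Delta x$. The main cosmetic difference is that the paper translates so that $p_1=\mathbf{0}$ and estimates $z(\Pi(p_2))$ directly, whereas you keep $\alpha_1(0)=\alpha_2(0)=\mathbf{0}$ and track $z(p_1^{-1}p_2)$ with an explicit error term $E$; your ``ramp'' minorant $\int_0^{t_1}h\ge Y_1^2/(4c)$ is also tighter and makes the final arithmetic cleaner. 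One small expository gap: you attribute both normalizations $t_1\le t_2$ and $y_1\le y_2$ to the single symmetry $q\mapsto q^{-1}$, but inversion only lets you swap the roles of $\alpha_1,\alpha_2$ (yielding $t_1\le t_2$); to additionally arrange $y_1\le y_2$ you must also apply the automorphism $s_{1,-1}\colon(x,y,z)\mapsto(x,-y,-z)$, which preserves $C$, horizontality, unit $x$-speed, and the Lipschitz bound (this is what the paper does). You should also note explicitly that $Y_1\ge 0$ together with $|\Delta y-Y_1|\le c\,\Delta x$ and $|\Delta y|>4c\,\Delta x$ rules out $\Delta y<0$, so that $Y_1\ge\Delta y-c\,\Delta x>3c\,\Delta x$ is legitimate. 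With these two points spelled out the proof is complete.
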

\begin{proof}
  Note that $C^{-1}=C$, so $p_1^{-1}p_2\in C$ if and only if $p_2^{-1}p_1\not \in C$. After possibly switching $\alpha_1$ and $\alpha_2$, we may suppose that $t_1\le t_2$.  Since $s_{1,-1}(C)=C$, after possibly replacing $\alpha_i$ by $s_{1,-1}(\alpha_i)$, we may suppose that $y(\alpha_1(t))\le y(\alpha_2(t))$ for all $t\in [0,t_1]$.
  
  We translate so that $p_1=\mathbf{0}$.  It suffices to show that $|y(p_2)|\le 4c|x(p_2)|$ or $|y(p_2)|\le \sqrt{8c|z(p_2)|}$.  Suppose that $|y(p_2)|>4c|x(p_2)|$. We claim that $z(p_2)\le -\frac{y(p_2)^2}{8c}$.

  Since $|y(p_2)|>4cx(p_2)$ and $\Lip(y\circ\alpha_2)<c$, we have $|y(\alpha_2(t_1))|> 3cx(p_2).$ By hypothesis, $y(\alpha_1(t_1))=0\le y(\alpha_2(t_1))$, so
  $$y(\alpha_2(0)) \ge y(\alpha_2(t_2))-cx(p_2) > 3cx(p_2)$$
  and $y(p_2)>4cx(p_2)$. In particular, 
  \begin{equation}\label{eq:yalpha2}
    y(\alpha_2(t))\ge \frac{y(p_2)}{2} \qquad \text{ for } t_1\le t\le t_2.
  \end{equation}

  Since $\alpha_1$ and $\alpha_2$ are horizontal, \eqref{eq:horiz proj} implies $(z\circ \Pi\circ \alpha_i)'=-y\circ\alpha_i$ and
  \begin{align*}
    z(\Pi(p_1)) - z(\Pi(p_2)) = - z(\Pi(p_2)) 
    & = \int_0^{t_1} - y(\alpha_1(t))\ud t - \int_0^{t_2}-y(\alpha_2(t))\ud t\\
    & = \int_0^{t_1} y(\alpha_2(t)) - y(\alpha_1(t)) \ud t + \int_{t_1}^{t_2} y(\alpha_2(t)) \ud t \\
    & \stackrel{\eqref{eq:yalpha2}}{=} \int_0^{t_1} y(\alpha_2(t)) - y(\alpha_1(t)) \ud t +  \frac{x(p_2)y(p_2)}{2}
  \end{align*}
  Since $\Lip(y\circ\alpha_i)<c$, for $t_1-\frac{y(p_2)}{8c}\le t\le t_1$, 
  $$y(\alpha_2(t)) - y(\alpha_1(t))\ge y(\alpha_2(t_1)) - y(\alpha_1(t_1)) - 2c \frac{y(p_2)}{8c} \stackrel{\eqref{eq:yalpha2}}{\ge} \frac{y(p_2)}{4}.$$
  Thus
  $$-z(\Pi(p_2))\ge \int_{t_1-\frac{y(p_2)}{8c}}^{t_1} y(\alpha_2(t)) - y(\alpha_1(t)) \ud t + \frac{x(p_2) y(p_2)}{2} \ge \frac{y(p_2)^2}{32c} + \frac{x(p_2) y(p_2)}{2}.$$
  By \eqref{eq:def Pi},
  $$z(p_2) = z(\Pi(p_2)) + \frac{x(p_2) y(p_2)}{2} \ge \frac{y(p_2)^2}{32c},$$
  so $|y(p_2)|\le \sqrt{32c |z(p_2)|}$, as desired.
\end{proof}

\begin{remark}\label{rem:lipgraphs}
  We use Lemma~\ref{lem:blarg} to show that certain $Z$--graphs are intrinsic Lipschitz graphs. A sufficiently regular $Z$--graph $\Gamma$ can be written as a union of horizontal curves. In the examples we will consider, there will be a basepoint $p_0$ such that for every $p\in \Gamma$, there is a horizontal curve $\gamma_p$ from $p_0$ to $p$. If the projections $\pi\circ \gamma_p$ are the graphs of Lipschitz functions and no two such projections cross, then for any $p,q\in \Gamma$, applying Lemma~\ref{lem:blarg} to $\gamma_p$ and $\gamma_q$ implies that $p^{-1}q\not\in C$. Since $C$ is a scale-invariant open double cone containing $Y$, this implies that $\Gamma$ is an intrinsic Lipschitz graph.
\end{remark}

\begin{example}\label{ex:flex}
  Let $\gamma\from \R\to \HH$ be a smooth horizontal curve with unit $x$--speed. Let $\sigma(s)=(y\circ \gamma)'(s)$ be the slope of $\gamma$ and let $\delta\in C^\infty(\R)$, $\delta>0$. Let $\rho\from \R^2\to \HH$,
  $$\rho(s,t)=\begin{cases}
    \gamma(s)\cdot \big(X+\left(\sigma(s)+\delta(s)\right) Y\big)^{|t|} & t \ge 0 \\
    \gamma(s)\cdot \big(X+\left(\sigma(s)-\delta(s)\right) Y\big)^{|t|} & t < 0.
  \end{cases}
  $$

  Let $I=(a,b)\subset \R$ be an open interval and choose $0<\epsilon\le \infty$ small enough that $\Sigma=\rho(I\times(-\epsilon,\epsilon))$ is a $Z$--graph. Let $p_0=\gamma(a)$. For any point $p=\rho(s,t)$, there is a horizontal curve in $\Sigma$ from $p_0$ to $p$ which consists of the segment $\gamma([a,s])$ concatenated with a horizontal ray of slope $\sigma(s)\pm \delta(s)$ originating at $\gamma(s)$. These curves all project to Lipschitz graphs in $\R^2$, and no two such graphs cross, so by Remark~\ref{rem:lipgraphs}, $\Sigma$ is an intrinsic Lipschitz graph.

  We construct a calibration for $\Sigma$ as follows. Let $\Omega=\pi(\Sigma)$, and let $W=\pi^{-1}(\Omega)$. For any $p\in W$, there is a unique point $(s,t)\in I\times(-\epsilon,\epsilon)$ such that $\pi(\rho(s,t))=\pi(p)$. Define
  $$\tau(p)=\begin{cases}
    \sigma(s)+\delta(s) & t\ge 0\\
    \sigma(s)-\delta(s) & t < 0
  \end{cases}$$
  so that $\tau(p)=\nabla_ff(\Pi(p))$ for every $p\in \Sigma\setminus \gamma$, 
  and let $\ovln{M}=-\tau X + (1-\frac{\tau^2}{2})Y$.

  The surface $\gamma\langle Z\rangle$ cuts $W$ into two halves $W^+$ and $W^-$ such that $\tau$ is smooth on each half. Each half $W^\pm\cap \Sigma$ is a smooth ruled $Z$--graph, so by Example~\ref{ex:ruled}, $\div_{\HH}\ovln{M}=0$ on $W^\pm$. If we extend $\tau$ to $\tau^\pm\in C^1(\overline{W^\pm})$ by continuity, it satisfies \eqref{eq:tau conserv} everywhere along the boundary, so by Proposition~\ref{prop:GR conserv}, $\ovln{M}$ is conservative. By Proposition~\ref{prop:calibrations}, $\Sigma$ is energy-minimizing on $W$.
\end{example}

This lets us prove Theorem~\ref{thm:constructEnergy}.
\begin{proof}[Proof of Theorem~\ref{thm:constructEnergy}]
  Let $\alpha>0$ and let $I_1,I_2,\dots \subset [-\alpha, \alpha]$ be a collection of disjoint nonempty open intervals. Let $I_i=(a_i,b_i)$, let $m_i=\frac{a_i+b_i}{2}$, and let $\delta_i=\frac{b_i-a_i}{2}$. Let $K=[-\alpha, \alpha]\setminus \bigcup I_i$ and let $\Lambda_{K}\subset \HH$ be as in the statement of the theorem. Let $R_0$ be the negative $x$--axis, and for each $i\ge 1$, let $R_i$ be a positive horizontal ray from the origin with slope $m_i$.

  We first decompose $\R^2$ into wedges.  Let
  $$W_0=\left\{(x,y)\in \R^2\mid |y|>\alpha x\right\},$$
  $$W_i=\left\{(x,y)\in \R^2\mid x>0, \frac{y}{x} \in I_i\right\},$$
  and
  $$P_K=\left\{(x,y) \in \R^2 \mid x> 0, \left|\frac{y}{x}\right| \in K\right\}\cup \{(0,0)\}.$$
  These sets are disjoint and their union is $\R^2$.

  Every point $p\in \Lambda_K$ is the endpoint of a horizontal curve. That is, for any $p\in \Lambda_K$, there is a unique horizontal curve $\gamma_p\from (-\infty, x(p)] \to \Lambda_K$ with unit $x$--speed such that $\gamma_p(x(p))=p$. These curves are illustrated in Figure~\ref{fig:branch}. The shape of the curve depends on which wedge contains $\pi(p)$. When $\pi(p)\in W_0$, $\gamma_p$ travels along the negative $x$--axis, then along a horizontal ray of slope $\pm\alpha$. When $\pi(p)\in W_i$, $\gamma_p$ travels along the negative $x$--axis to $\mathbf{0}$, along a horizontal ray of slope $m_i$, then along a horizontal ray of slope $m_i\pm \delta_i$. When $\pi(p)\in P_K$, $\gamma_p$ travels along the negative $x$--axis to $\mathbf{0}$, then along a horizontal ray of slope $k$, for some $k\in K$.

  For any two points $p,q\in \Lambda_K$, the curves $\gamma_p$ and $\gamma_q$ satisfy Lemma~\ref{lem:blarg} with $\max\{\Lip(y\circ\gamma_p), \Lip(y\circ\gamma_q)\}\le \alpha$, so there is a scale-invariant double cone such that $p\not \in qC$. It follows that $\Lambda_K$ is an intrinsic Lipschitz graph; in fact it is an entire intrinsic Lipschitz graph. By construction, it is also an entire $Z$--graph. Let $h\from V_0\to \R$ be the function such that $\Lambda_K=\Gamma_h$.
  
  Next, we construct a function $\tau_K$ and a corresponding calibration $\ovln{M}_K$ to show that $\Lambda_K$ is energy-minimizing. Since $\Lambda_K$ is a $Z$--graph, we choose a $\tau_K$ that is constant on vertical lines.  For $i>0$, let
  $$W_i^+=\{(x,y)\in W_i \mid y\ge m_ix\},$$
  $$W_i^-=\{(x,y)\in W_i \mid y< m_i x\}.$$
  Let
  $$\tau_K(x,y,z)=
  \begin{cases}
    m_i\pm \delta_i & (x,y)\in W_i^\pm \\
    \alpha & (x,y)\in W_0, y\ge 0 \\
    -\alpha & (x,y)\in W_0, y < 0 \\
    \frac{y}{x} & (x,y)\in P_K.
  \end{cases}$$

 
  We claim that $\ovln{M}_K =-\tau_K X + (1-\frac{\tau_K}{2})Y$ is conservative and that $\tau(p)=\nabla_{h}h(\Pi(p))$ almost everywhere on $\Lambda_K$. Let $K_n=[-\alpha,\alpha]\setminus (I_1\cup I_2\cup \dots \cup I_n)$ and define $P_{K_n}$, $\tau_{K_n}$ and $\ovln{M}_{K_n}$ as above. 
  Then $\HH$ is the union of finitely many sets
  $$\HH=P_{K_n}\cup \bigcup_{i=0}^n W_i^+\cup \bigcup_{i=0}^n W_i^-,$$
  each with locally finite perimeter. On each of these sets, $\ovln{M}_{K_n}$ is $C^1$ and $\div_\HH \ovln{M}_{K_n}=0$. Further, $\tau_{K_n}$ is continuous except along the vertical half-planes separating $W_i^+$ from $W_i^-$.  These half-planes have slope $m_i$ (taking $m_0=0$), and $\tau_{K_n}$ takes values $m_i\pm \delta_i$ above and below the central ray (taking $\delta_0=\alpha$), so $\tau_{K_n}$ satisfies \eqref{eq:tau conserv}. By Proposition~\ref{prop:GR conserv}, $\ovln{M}_{K_n}$ is conservative.  Since $\ovln{M}_{K}$ is the uniform limit of the $\ovln{M}_{K_n}$'s, $\ovln{M}_{K}$ is also conservative.


  We calculate $\nabla_hh$ by applying Theorem~1.2 of \cite{BiCaSC}. This theorem implies that there is a $\cS^3$--null subset $R \subset \Lambda_K$ such that for any $p\in\Lambda_K\setminus S$ and any unit $x$--speed horizontal curve $\beta=(\beta_x,\beta_y,\beta_z)\from (-\epsilon,\epsilon)\to \Lambda_K$ such that $\beta(0)=p$, if $\beta_y'$ is differentiable, then $\nabla_hh(\Pi(p))=\beta_y'(0)$.\footnote{The original theorem is stated in terms of integral curves of $\nabla_h$. This version is obtained by applying the original theorem to $\Pi\circ \beta$, which is an integral curve of $\nabla_h$.}
  
  Let $p\in \Lambda_K\setminus S$ be a point that does not lie on the negative $x$--axis or on the countably many horizontal rays bisecting the $W_i$'s. Then $\Lambda_K$ contains a horizontal ray of slope $\tau_K(p)$ through $p$, so
  $\nabla_hh(\Pi(p)) =\tau_K(p),$ as desired. Proposition~\ref{prop:calibrations} then implies that $\Lambda_K$ is energy-minimizing.
\end{proof}

Finally, we show that these energy-minimizing graphs can be written as limits of the stretched $H$--minimal graphs $\Sigma_K$ constructed in \cite{GR2020} (see Theorem~\ref{thm:constructArea}).

\begin{proof}[Proof of Proposition~\ref{prop:stretchLimits}]
  Let $\alpha$ and $K=[-\alpha, \alpha]\setminus \bigcup I_i$ be as in Theorem~\ref{thm:constructEnergy}. Let $m_i$ be the midpoint of $I_i$, and for $n>\sqrt{\alpha}$, let
  $K_n= n^{-2} K$. Let $\Sigma_{K_n}$ as in Theorem~\ref{thm:constructArea}, and let $S_n=s_{n^{-1},n}(\Sigma_{K_n})$. We claim that the $S_n$'s are intrinsic Lipschitz graphs and that $\one_{S_n^+}\to \one_{\Lambda_K^+}$ in $L_1^{\mathrm{loc}}(\HH)$.

  First, note that $\Sigma_{K_n}$ can be written as a union of horizontal rays in the direction $\pm n^{-2}\alpha\in S^1$, rays in the direction of some $v\in K_n$, and rays in the direction of $n^{-2}m_{i}$. If $R_n$ is a ray with $\angle(R,X)=n^{-2}\theta$, then $s_{n^{-1},n}(R_n)$ is a ray with slope $n^2\tan^{-1}(n^{-2}\theta)\to \theta$ as $n\to\infty$. It follows that if $n$ is sufficiently large, then $S_n$ is a union of horizontal rays with slopes between $-2\alpha$ and $2\alpha$. In fact, for every $p\in S_n$, there is a horizontal curve $\beta^n_p\from (-\infty, x(p)] \to S_n$ such that $x(\beta^n_p(t))=t$ and $\beta^n_p(x(p))=p$.
  Any two curves $\beta^n_p$ and $\beta^n_q$ satisfy Lemma~\ref{lem:blarg}, so the $S_n$'s are intrinsic Lipschitz graphs with uniform Lipschitz constant.

  The $\R$--trees formed by the horizontal curves in $\Lambda_K$ and $S_n$ are isomorphic. That is, there are homomorphisms $h_n\from \Lambda_K \to  S_n$ that send each horizontal curve in $\Lambda_K$ to a horizontal curve in $S_n$, and these can be chosen so that for any $p\in \Lambda_K$, the uniform limit $\lim_{n\to\infty}\beta^n_{h_n(p)}$ is a horizontal curve in $\Lambda_K$ ending at $p$.
  Consequently, $h_n\to \id_{\Lambda_K}$ uniformly on bounded sets, and since the $S_n$'s are uniformly intrinsic Lipschitz, we have $\one_{S_n^+}\to \one_{\Lambda_K^+}$ in $L_1^{\mathrm{loc}}(\HH)$.
\end{proof}

\section{Contact harmonic graphs and first variation formulas}\label{sec:graphs and variations}

In this section, we propose an intrinsic analogue of harmonic functions and discuss some applications and limitations. It is natural to define a harmonic intrinsic graph in $\HH$ as a critical point of the \emph{intrinsic Dirichlet energy}
$$E_U(\Gamma)=\frac{1}{2}\int_U (\nabla_f f)^2\ud\mu.$$
It is not clear, however, how to choose an appropriate class of variations of $\Gamma$.

When $\Gamma$ is smooth or piecewise smooth, we may consider smooth perturbations of $\Gamma$. Critical points with respect to smooth perturbations have vanishing horizontal mean curvature.
\begin{prop}[First variation formula for smooth graphs]\label{prop:fvf range}
  Let $U\subset V_0$ be a bounded open set.  Let $f\in C^\infty(U)$, $h\in C^\infty_c(U)$. For $t\in (-\epsilon, \epsilon)$, let $f_t=f+th$ and $\Gamma_t=\Gamma_{f_t}$. Then
  $$\frac{\ud}{\ud t} E_U(f_{t}) = -\int_{U} \nabla_f^2 f\cdot h  \ud \mu.$$
\end{prop}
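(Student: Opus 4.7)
The plan is to differentiate $E_U(f_t)$ under the integral sign, express the result as an integral against $h$ using Corollary~\ref{cor:integration by parts}, and observe a cancellation. Since $h$ has compact support in $U$, all boundary terms produced by integration by parts will vanish, which is what makes the calculation clean.

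First I would expand $\nabla_{f_t} f_t = \partial_x f_t - f_t\,\partial_z f_t$ with $f_t = f + th$ and differentiate in $t$ at $t=0$. A direct computation gives
\begin{equation*}
  \left.\tfrac{\ud}{\ud t}\right|_{t=0}\nabla_{f_t} f_t
  = \partial_x h - f\,\partial_z h - h\,\partial_z f
  = \nabla_f h - h\,\partial_z f,
\end{equation*}
so differentiating under the integral (justified because $f,h$ are smooth and $h$ has compact support) yields
\begin{equation*}
  \left.\tfrac{\ud}{\ud t}\right|_{t=0} E_U(f_t)
  = \int_U \nabla_f f\cdot\nabla_f h\ud\mu
    \;-\;\int_U \nabla_f f\cdot h\,\partial_z f\ud\mu.
\end{equation*}

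Next I would apply Corollary~\ref{cor:integration by parts} with $g=\nabla_f f$, where the compact support of $h$ makes the boundary integral vanish:
\begin{equation*}
  \int_U \nabla_f[\nabla_f f]\cdot h\ud\mu
  = \int_U (\nabla_f f)\,h\cdot\partial_z f\ud\mu
    - \int_U \nabla_f f\cdot \nabla_f h\ud\mu.
\end{equation*}
Rearranging, the first term on the right-hand side of the derivative equals
\begin{equation*}
  \int_U \nabla_f f\cdot\nabla_f h\ud\mu
  = \int_U \nabla_f f\cdot h\,\partial_z f\ud\mu
    - \int_U \nabla_f^2 f\cdot h\ud\mu.
\end{equation*}

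Substituting this back, the two occurrences of $\int_U \nabla_f f\cdot h\,\partial_z f\ud\mu$ cancel, leaving $\left.\tfrac{\ud}{\ud t}\right|_{t=0} E_U(f_t) = -\int_U \nabla_f^2 f\cdot h\ud\mu$, as claimed. I do not expect any real obstacle here, since $f$ is smooth on $U$ and $h$ is compactly supported, so every hypothesis of the integration-by-parts corollary ($\partial_z f$, $\partial_z[gh]$, $\nabla_f g$, $\nabla_f h$ in $L_1$) is trivially met, and the differentiation under the integral sign is justified by uniform bounds on the relevant smooth quantities over $\supp h$. The only place to be slightly careful is bookkeeping the sign arising from the $-h\,\partial_z f$ term in the derivative of $\nabla_{f_t} f_t$, which is precisely what drives the cancellation.
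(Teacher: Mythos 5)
Your proposal is correct and follows essentially the same route as the paper: differentiate $\nabla_{f_t}f_t$ at $t=0$, move the derivative inside the integral, and apply Corollary~\ref{cor:integration by parts} with $g=\nabla_ff$, using compact support of $h$ to kill the boundary term. The only difference is that you spell out the cancellation of the $\int_U \nabla_f f\cdot h\,\partial_zf\ud\mu$ terms explicitly, which the paper compresses into a single invocation of the corollary.
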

\begin{proof}
  First, note that
  \begin{align*}
    \pd{}{t} \nabla_{f_t}f_t
    &= \pd{}{t}\left[\partial_x[f+th] - (f+th) \partial_z[f+th]\right]\\
    &= \partial_x h - h \cdot \partial_z[f+th] - (f+th) \partial_{z}h.
  \end{align*}
  At $t=0$, 
  $$\left.\pd{}{t} \nabla_{f_t}f_t\right|_{t=0} = \partial_{x} h - h \pd{f}{z} - f \partial_z h = \nabla_f h - h \partial_z f.$$

  Therefore,
  \begin{align*}
    \left. \frac{\ud}{\ud t} E(\Gamma_{f_t}) \right|_{t=0} 
    &= \int_U \nabla_f f\cdot \left(\nabla_f h - h \cdot \partial_z f\right) \ud\mu= 
     \int_U \nabla_f f \cdot \nabla_f h - \nabla_f f \cdot h \cdot \partial_z f \ud\mu.
  \end{align*}
  By Corollary~\ref{cor:integration by parts}, with $g=\nabla_ff$ and $h=h$,
  \begin{align*}
    \left. \frac{\ud}{\ud t} E(\Gamma_{f_t}) \right|_{t=0} 
    &= - \int_U \nabla^2_ff \cdot h \ud\mu.
  \end{align*}
\end{proof}

Smooth energy-minimizing graphs are thus foliated by horizontal lines.
\begin{prop}\label{prop:least energy foliated}
  Let $U\subset V_0$ be a bounded open set and let $f\from U\to \R$ be a smooth function which is energy-minimizing on $U$.  Then $\nabla_f^2f=0$ on $U$, and for every $p\in \Gamma_f$, there is a horizontal line segment $L_p$ with endpoints in $\partial \Gamma_f$ such that $p\subset L_p\subset \Gamma_f$. 
\end{prop}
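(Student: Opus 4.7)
The plan is to apply Proposition~\ref{prop:fvf range} with compactly supported smooth variations and then read off the geometric content of the resulting Euler--Lagrange equation.

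First I would take an arbitrary $h\in C^\infty_c(U)$ and form $f_t=f+th$. Since $f$ is smooth on $U$ and hence has locally bounded $\nabla_ff$, the converse estimate recorded after Definition~\ref{def:distrib gradient} (\cite[Prop.~1.8]{CMPSC}) shows that $f_t$ is locally intrinsic Lipschitz on a neighborhood of $\supp(h)$ for every sufficiently small $|t|$, and $f_t=f$ outside $\supp(h)$, so $f_t$ is an admissible competitor. Choosing $p\in V_0$ and $r>0$ with $\supp(h)\subset U\cap B(p,r)$ and $B(p,r)\Subset U$, energy-minimality of $f$ yields $\frac{d}{dt}\big|_{t=0}E_{U\cap B(p,r)}(f_t)=0$. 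By Proposition~\ref{prop:fvf range} this reads
\[
\int_U \nabla_f^2 f\cdot h\,\ud\mu=0\quad\text{for every }h\in C_c^\infty(U),
\]
so the fundamental lemma of the calculus of variations (applicable since $\nabla_f^2f$ is continuous) forces $\nabla_f^2 f\equiv 0$ on $U$.

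For the foliation statement, I would interpret $\nabla_f^2f=\nabla_f(\nabla_ff)=0$ as saying that $\nabla_ff$ is constant along each integral curve of $\nabla_f=\partial_x-f\partial_z$. Since $f$ is smooth, for every $p\in\Gamma_f$ there is a unique maximal characteristic curve $\lambda\from I\to U$ through $\Pi(p)$, and $\nabla_ff\equiv c$ along $\lambda$ for some constant $c=c(p)\in\R$. A direct computation using $\lambda'(t)=(1,0,-f(\lambda(t)))$ and the group law then shows that the lifted horizontal curve $\gamma=\Psi_f\circ\lambda$ satisfies $\gamma'(t)=X_{\gamma(t)}+cY_{\gamma(t)}$ at every $t\in I$, so $\gamma$ is an integral curve of the left-invariant horizontal field $X+cY$ and hence a left translate of the one-parameter subgroup generated by $X+cY$---that is, a horizontal line segment $L_p\ni p$. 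Since $U$ is bounded and $\lambda$ is maximal in $U$, its endpoints approach $\partial U$, and the corresponding endpoints of $L_p$ approach $\partial\Gamma_f$.

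The main subtlety is the admissibility check for $f_t$; beyond that, the proposition is an immediate consequence of Proposition~\ref{prop:fvf range} together with the elementary observation that integral curves of constant left-invariant horizontal fields in $\HH$ are straight lines.
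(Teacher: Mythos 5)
Your argument follows the paper's proof essentially verbatim: apply Proposition~\ref{prop:fvf range} to conclude $\nabla_f^2f\equiv 0$ via the fundamental lemma, then observe that the lifted characteristic curves have constant slope. The paper phrases the second step as $\lambda_y''=(f\circ\gamma)''=\nabla_f^2f\circ\gamma=0$ rather than invoking integral curves of the left-invariant field $X+cY$, but the two are the same computation; your extra admissibility check that $f+th$ is intrinsic Lipschitz is a sensible addition the paper leaves implicit.
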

\begin{proof}
  By minimality, for any $h\in C^\infty_c(U)$ and any $t\in \R$, we have $E_U(\Gamma_{f})\le E_U(\Gamma_{f+th})$, so by Proposition~\ref{prop:fvf range}, 
  $$\left.\frac{\ud}{\ud t} E(\Gamma_{f+th}) \right|_{t=0} =- \int_U \nabla^2_ff \cdot h \ud\mu = 0$$
  and $\nabla^2_ff=0$ on $U$.

  Let $p\in \Gamma$ and $u=\Pi(p)$ and suppose that $u\in U$.  By the smoothness of $f$, there is a unique maximal integral curve of $\nabla_f$ through $p$, i.e., a curve $\gamma=(\gamma_x,\gamma_y,\gamma_z) \from I\to U$ such that $\gamma(0)=\Pi(p)$ and $\gamma'(t)=\partial_x - f(\gamma(t))\partial_z$.  Then $\lambda=\Psi_f\circ \gamma=\gamma(t)Y^{f(\gamma(t))}=: (\lambda_x,\lambda_y,\lambda_z)$ is a horizontal curve in $\Gamma$ satisfying $\lambda_x'=\gamma_x'=1$ and 
  $$\lambda_y''(t)=(f\circ \gamma)''(t)=\nabla_f^2f(\gamma(t))=0$$
  for all $t$. That is, $L_p=\lambda$ is a horizontal line segment containing $p$, as desired.
\end{proof}

Unfortunately, when $f$ is intrinsic Lipschitz, the perturbation $f+th$ need not be intrinsic Lipschitz. As an alternative, one can apply contact variations, like those considered in \cite{FogMonVitVar, Golo}. A \emph{contact diffeomorphism} of $\HH$ is a diffeomorphism $\HH\to \HH$ that sends horizontal vectors to horizontal vectors. A \emph{contact flow} is a one-parameter flow of such diffeomorphisms. Contact flows are generated by contact vector fields, and any contact vector field is determined by a potential function \cite[Sec.\ 5]{KorReiFoundations}; for any smooth function $\psi\in C^\infty(\HH)$, the corresponding contact vector field is given by
\begin{equation}\label{eq:contact field}
  V_\psi = (Y\psi)X - (X\psi)Y + \psi Z.
\end{equation}
We are particularly interested in contact diffeomorphisms and flows that send intrinsic graphs to intrinsic graphs.
\begin{defn}\label{defn:contact graph}
  A \emph{contact graph diffeomorphism} is a contact diffeomorphism that sends cosets of $\langle Y \rangle$ to cosets of $\langle Y \rangle$.  Flows of contact graph diffeomorphisms are generated by contact vector fields $V_\psi$ whose $X$--component is constant on each coset of $\langle Y \rangle$, i.e., fields generated by potentials satisfying $YY\psi=0$.
  
  Let $\phi_u(\Gamma), u\in (-\epsilon, \epsilon)$ be the flow generated by a contact vector field $V_\psi$ whose potential satisfies $YY\psi=0$. For an intrinsic graph $\Gamma$, we call the family of surfaces of the form $\phi_u(\Gamma), u\in (-\epsilon, \epsilon)$ a \emph{contact graph variation}. 
\end{defn}

Section~4 of \cite{Golo} describes these diffeomorphisms in terms of diffeomorphisms from $V_0$ to $V_0$. We can identify the coset space $\HH/\langle Y\rangle$ with $V_0$ by the projection $\Pi$. A contact graph diffeomorphism $\phi$ then determines a diffeomorphism $\bar{\phi}\from V_0\to V_0$ such that $\bar{\phi}(v)=\Pi(\phi(v))$ for all $v\in V_0$.

When $\bar{\phi}$ is $C^1$--close to the identity, it sends characteristic curves of an intrinsic Lipschitz graph  $\Gamma$ to characteristic curves of $\phi(\Gamma)$. That is, let $\Gamma=\Gamma_f$ be an intrinsic Lipschitz graph and suppose that $\nabla_f[x\circ \bar{\phi}](v)>0$ for all $v\in V_0$. Let $\gamma\from \R\to V_0$ be a characteristic curve of $\Gamma$, parametrized with unit $x$--speed. Then $\lambda=\Psi_f\circ \gamma$ is a horizontal curve in $\Gamma$, and $\phi\circ \lambda$ is a horizontal curve in $\phi(\Gamma)$ such that
$$(x\circ \phi \circ \lambda)'(t) = (x\circ \bar{\phi} \circ \gamma)'(t) = \nabla_f[x \circ \bar{\phi}](\gamma(t)) >0.$$
Thus the $x$--coordinate of $\Pi\circ \phi \circ \lambda = \bar{\phi}\circ \gamma$ is increasing, and it is a characteristic curve of $\phi(\Gamma)$. 

We propose the following definition. 
\begin{defn}  
  Let $U\subset V_0$ be an open set, let $f\from U\to \R$ be an intrinsic Lipschitz function, and let $\Gamma=\Gamma_f$. We say that $\Gamma$ is \emph{contact harmonic} if $f$ is a critical point of $E_U$ among contact graph variations with potentials whose support lies in $\Pi^{-1}(U)$.
\end{defn}
In the rest of this section, we will prove some first variation formulas for the intrinsic Dirichlet energy and use them to characterize contact harmonic graphs.

\subsection{First variation for intrinsic Lipschitz graphs}\label{sec:fvf lip}

We first prove a formula for the variation of the energy of an intrinsic Lipschitz graph under a contact flow. 
We start by considering the smooth case, as in \cite{FogMonVitVar}.

\begin{thm}[First variation formula for contact variations]\label{thm:fvf domain smooth}
  Let $\Gamma=\Gamma_f$ be an intrinsic graph of a smooth function. Let $\psi\in C^\infty(\HH)$ be a potential such that $YY\psi=0$, let $V_\psi= (Y\psi)X - (X\psi)Y + \psi Z$, and let $\phi_t\from \HH\to \HH$, $t\in [-\epsilon,\epsilon]$ be the flow of $V_\psi$.  Let $\Gamma_t=\phi_t(\Gamma)$.  

  Let $\bar{\phi}_t\from V_0\to V_0$, $\bar{\phi}_t(v)=\Pi(\phi_t(v))$ be the corresponding family of diffeomorphisms of $V_0$.  Let $W$ be the vector field on $V_0$ generating the $\bar{\phi}_t$'s, i.e., 
  $$W(v) = \Pi_*(V_\psi(v)) = (Y\psi(v),0,\psi(v))$$
  for all $v\in V_0$.  Let $w_1=x\circ W$, $w_2=(x\circ W)\cdot f + z\circ W$, so that $W= w_1 \nabla_f + w_2\partial_z.$
  
  Let $U\subset V_0$ be a bounded open subset. There are $\epsilon_0=\epsilon_0 (U, \psi, \|f\|_{L_\infty(U)})>0$ and $C=C(U, \psi, \|f\|_{L_\infty(U)})>0$ such that 
  \begin{equation}\label{eq:fvf domain full}
    |E_{\bar{\phi}_t(U)}(f_t) - E_{U}(f) - (A_1+A_2) t|\le C \left(E_{U}(f) +\mu(U)\right) t^2
  \end{equation}
  for all $t\in [-\epsilon_0,\epsilon_0]$,
  where $A_1$ and $A_2$ depend on $w_1$ and $w_2$ respectively:
  \begin{align*}
    A_1& = A_1(f,w_1) = \int_{U}w_1 \cdot \nabla_{f}^2f + \frac{1}{2} (\nabla_ff)^2\left( \partial_xw_1 - \partial_z[fw_1]\right)\ud \mu \\
    A_2&= A_2(f, w_2) = \int_{U} -\nabla_f^2w_2 \cdot \nabla_{f}f + \frac{1}{2}  (\nabla_ff)^2\cdot \partial_z w_2 \ud \mu.
  \end{align*}
  If $\supp W\Subset U$, then $A_1=0$ and 
  \begin{equation}\label{eq:fvf domain smooth compact}
    A_2
     = \int_{U} \bigl(w_2\cdot \partial_zf + \nabla_fw_2\bigr)\cdot \nabla_f^2f \ud \mu 
     = \int_{U} w_2\cdot\bigl(2 \partial_zf \cdot \nabla_f^2 f - \nabla_f^3 f\bigr) \ud \mu.
   \end{equation}
\end{thm}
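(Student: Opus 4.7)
The plan is to compute $E_{\bar\phi_t(U)}(f_t)$ as an integral over the fixed domain $U$ via the change of variables $u=\bar\phi_t(v)$, expand the integrand to first order in $t$, and identify the coefficient as $A_1+A_2$. Writing
\[ E_{\bar\phi_t(U)}(f_t)=\int_U G_t(v)\,\ud\mu(v),\qquad G_t(v)=\tfrac{1}{2}\bigl(\nabla_{f_t}f_t\bigr)^2(\bar\phi_t(v))\,J_t(v), \]
with $J_t=\det D\bar\phi_t$, I would first expand $J_t$ and the pulled-back slope $\tilde g_t(v):=\nabla_{f_t}f_t(\bar\phi_t(v))$ in $t$. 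Since $\bar\phi_t(v)=v+tW(v)+O(t^2)$ on $V_0$, one has $\dot J_0=\div_{\R^2} W$, which after substituting $\psi|_{V_0}=w_2-fw_1$ equals $(\partial_x w_1-\partial_z[fw_1])+\partial_z w_2$. For $\tilde g_t$ I would use that it equals the slope of the horizontal tangent to $\Gamma_t$ at $\phi_t(\Psi_f(v))$, namely the slope of $(\phi_t)_*(X_p+g(v)Y_p)$ where $p=\Psi_f(v)$. The Lie derivative formula $(\phi_t)_*X=X-t[V_\psi,X]+O(t^2)$ combined with the brackets $[V_\psi,X]=(XX\psi)Y-(XY\psi)X$ and $[V_\psi,Y]=(YX\psi)Y$, where the second uses $YY\psi=0$, gives $\tilde g_t=g+t\Delta+O(t^2)$ with $\Delta=-XX\psi(p)-g\bigl(XY\psi(p)+YX\psi(p)\bigr)$. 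Converting these $\HH$-derivatives of $\psi$ to intrinsic derivatives of $w_1,w_2$ on $V_0$ via the explicit extension $\psi(v'\cdot Y^s)=(w_2-fw_1)(v')+s\,w_1(v')$ yields a formula for $\dot G_0$ whose integral over $U$ matches $A_1(f,w_1)+A_2(f,w_2)$ after rearrangement.

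For the compact-support case $\supp W\Subset U$ it is cleaner to bypass this computation. Since $\bar\phi_t$ is the identity outside $\supp W$, we have $\bar\phi_t(U)=U$ and $E_{\bar\phi_t(U)}(f_t)=E_U(f_t)$. I would compute the pointwise Eulerian perturbation $\dot f_0(v)=\dot F_0(v)-W(v)\cdot\nabla f(v)$ with $\dot F_0(v)=-X\psi(\Psi_f(v))$, and then use the identity
\[ X\psi\bigl(\Psi_f(v)\bigr)=\nabla_f w_2(v)-\nabla_f f(v)\cdot w_1(v), \]
which follows by writing $X=\partial_x-\tfrac{y}{2}\partial_z$, differentiating the explicit extension of $\psi$ in $V_0$-coordinates, and restricting to $y=f(v)$. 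After cancellation one finds $\dot f_0=-(\nabla_f w_2+w_2\,\partial_z f)$, depending only on $w_2$. Proposition~\ref{prop:fvf range} then gives
\[ \frac{\ud}{\ud t}\bigg|_{t=0}E_U(f_t)=-\int_U\nabla_f^2 f\cdot\dot f_0\,\ud\mu=\int_U\bigl(\nabla_f w_2+w_2\,\partial_z f\bigr)\cdot\nabla_f^2 f\,\ud\mu, \]
which is the first form of $A_2$ and forces $A_1=0$ in this case. The equivalence with the second form of $A_2$ follows from one integration by parts via Corollary~\ref{cor:integration by parts} with $g=w_2$ and $h=\nabla_f^2 f$.

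The quadratic error bound follows from estimating the second-order remainder in the expansion of $G_t$. The key pointwise estimate is $|G_t(v)-G_0(v)-t\dot G_0(v)|\lesssim t^2\bigl(1+(\nabla_f f(v))^2\bigr)$, with implicit constants controlled by finitely many derivatives of $\psi$ on a neighborhood of $\Psi_f(U)$ and by $\|f\|_{L_\infty(U)}$ together with smoothness of $f$; integration then yields $C(E_U(f)+\mu(U))t^2$. The main technical obstacle is organizing the second-order expansion of $\tilde g_t$ and $J_t$ so that the quadratic factor $(\nabla_f f)^2$ is cleanly separated from smooth bounded factors; uniform control on second-order derivatives of the flow in horizontal directions, coming from smoothness of $\psi$ and compactness of $\overline{U}$, makes the bound uniform for $t\in[-\epsilon_0,\epsilon_0]$ with $\epsilon_0$ as in the statement.
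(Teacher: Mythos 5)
Your overall strategy matches the paper's: pull the energy back to the fixed domain $U$, expand the Jacobian $J_t$ and the pulled-back slope $\tilde g_t$ to first order, and then estimate the second-order remainder. Your Jacobian formula and the identity $\psi|_{V_0}=w_2-fw_1$ are correct, and your Lie-bracket derivation of $\partial_t\tilde g_t|_{t=0}=-XX\psi-g(XY\psi+YX\psi)$ is a valid alternative route to what the paper does by differentiating the parametrized horizontal curve in Lemma~\ref{lem:domain to range} and converting via Lemma~\ref{lem:prelim 1}; after the $\psi\to(w_1,w_2)$ translation the two give the same formula $-\nabla_f^2w_2+w_1\nabla_f^2f$.

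The genuinely different step is your treatment of the compact-support case. Computing the Eulerian perturbation $\dot f_0=-(\nabla_f w_2+w_2\,\partial_z f)$ and then invoking Proposition~\ref{prop:fvf range} is cleaner and more conceptual than the paper's route, which instead starts from the pulled-back general formula and performs three applications of Corollary~\ref{cor:integration by parts}. Your shortcut directly recovers the first form of~\eqref{eq:fvf domain smooth compact}, and one more integration by parts gives the second form, exactly as you say. However, it does not actually establish the separate identity $A_1=0$: what you obtain is $A_1+A_2=\int_U(\nabla_fw_2+w_2\partial_zf)\nabla_f^2f\,\ud\mu$, and the theorem claims the two summands individually. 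To close this, you still need the paper's computation for $A_1$: rewrite $A_1=\frac12\int_K\nabla_f[(\nabla_ff)^2]\cdot w_1+(\nabla_ff)^2\nabla_fw_1-(\nabla_ff)^2w_1\partial_zf\,\ud\mu$ and apply Corollary~\ref{cor:integration by parts} with $g=(\nabla_ff)^2$, $h=w_1$ to see it vanishes. Saying the shortcut ``forces $A_1=0$'' is backwards: it only determines the sum.

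Your sketch of the quadratic remainder is in the same spirit as the paper's bound on $F''$. The one thing to be careful about, which the paper handles explicitly, is that the $X$-component of the pushed-forward tangent must be bounded away from zero uniformly over $U$ and $t\in[-\epsilon_0,\epsilon_0]$; this is where $\epsilon_0$ depends on $\psi$, $U$, and $\|f\|_{L_\infty(U)}$ via compactness of $M=\{vY^r:v\in U,\,|r|\le\|f\|_{L_\infty(U)}\}$. Your ``uniform control on second-order derivatives of the flow'' covers this in spirit but should name this lower bound on $X[x\circ\phi_t]$ as the key ingredient.
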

Note that $C$ only depends on the $L_\infty$ norm of $f$ and not $f$ itself. This will be important later, when we prove a version of Theorem~\ref{thm:fvf domain smooth} for intrinsic Lipschitz graphs. We cannot make $C$ completely independent of $f$ because the existence of $C$ is based on a compactness argument, and $\Gamma$ escapes to infinity when $f$ is large.  

Note also that in the compactly supported case, the first variation is independent of $w_1$. Indeed, vector fields parallel to $\nabla_f$ generate flows that preserve the foliation of $V_0$ by characteristic curves and thus preserve $\Gamma$. 

Before we prove Theorem~\ref{thm:fvf domain smooth}, we make some preliminary calculations.
\begin{lemma}\label{lem:prelim 1}
  With notation as in Theorem~\ref{thm:fvf domain smooth}, let $h\in \HH$ and let $u=\Pi(h)$.  Then
  \begin{equation}\label{eq:psi formula}
    \psi(h)=w_2(u)+(y(h)-f(u)) w_1(u)
  \end{equation}
  and $Y\psi(h)=w_1(u)$.
  When $h\in \Gamma$,
  \begin{equation}\label{eq:xpsi formula}
    X\psi(h) =\nabla_fw_2(u) - \nabla_ff(u)\cdot w_1(u)
  \end{equation}
\end{lemma}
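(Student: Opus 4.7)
The plan is to use the hypothesis $YY\psi=0$ to reduce $\psi$ to its values on $V_0$, and then to apply $X$ along $\Gamma$ via a chain-rule identity for the projection $\Pi$.

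First I would write any $h\in\HH$ as $h=u\cdot Y^{y(h)}$ with $u=\Pi(h)$; this follows directly from the Heisenberg product law and the formula \eqref{eq:def Pi}. Since $Y$ generates the one-parameter subgroup $\{Y^s\}_{s\in\R}$, along the curve $s\mapsto u\cdot Y^s$ we have $\tfrac{d}{ds}\psi = Y\psi$ and $\tfrac{d^2}{ds^2}\psi = YY\psi = 0$, so $\psi$ is affine in $s$ along the fiber of $\Pi$ through $u$, giving $\psi(u\cdot Y^s) = \psi(u)+s\cdot Y\psi(u)$ and $Y\psi(u\cdot Y^s)=Y\psi(u)$. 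On $V_0$, the definitions of $w_1,w_2$ give $w_1=Y\psi|_{V_0}$ and $\psi|_{V_0}=z\circ W=w_2-f\cdot w_1$. Setting $s=y(h)$ and substituting yields \eqref{eq:psi formula} and $Y\psi(h)=w_1(u)$ at once.

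For \eqref{eq:xpsi formula} I would use the key identity $X[g\circ\Pi](h)=\nabla_f g(\Pi(h))$ whenever $h\in\Gamma$, for any smooth $g\from V_0\to\R$. To verify this, compute $h\cdot X^s=(x(h)+s,y(h),z(h)-y(h)s/2)$ and apply $\Pi$ to obtain $\Pi(h\cdot X^s)=(x(h)+s,0,z(h)-x(h)y(h)/2-sy(h))$, whose $s$-derivative at $0$ is $(1,0,-y(h))$ as a vector in $V_0$; therefore $X[g\circ\Pi](h)=\partial_xg(u)-y(h)\partial_zg(u)$, and on $\Gamma$ we have $y(h)=f(u)$, so this equals $\nabla_f g(u)$. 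Writing $\widetilde{g}:=g\circ\Pi$, the formula \eqref{eq:psi formula} reads $\psi=\widetilde{w_2}+(y-\widetilde{f})\widetilde{w_1}$. Differentiating by $X$ and using $Xy=0$ (since $X$ has no $\partial_y$-component) together with the vanishing $y(h)-f(u)=0$ on $\Gamma$, the middle term drops out and one gets $X\psi(h)=\nabla_f w_2(u)-\nabla_f f(u)\cdot w_1(u)$.

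Everything is elementary: the only real bookkeeping is distinguishing functions on $V_0$ (namely $w_1$, $w_2$, $f$) from their pull-backs to $\HH$ by $\Pi$, and noticing that the restriction $y=f\circ\Pi$ on $\Gamma$ is exactly what kills the $(y-\widetilde{f})\widetilde{w_1}$ term so that the $X$-derivative collapses to $\nabla_f$ acting on the base.
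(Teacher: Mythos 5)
Your proof is correct and tracks the paper's argument closely, with one genuine but small variation worth noting. Where the paper establishes $w_1(u)=Y\psi(h)$ and \eqref{eq:psi formula} by invoking the flow $\phi_t$ (using that a contact graph flow preserves cosets of $\langle Y\rangle$, so $W(u)=\Pi_*(V_\psi(h))$), you extract the affine dependence of $\psi$ along the fiber $s\mapsto u\cdot Y^s$ directly from $YY\psi=0$ and then substitute $\psi|_{V_0}=z\circ W=w_2-fw_1$ and $Y\psi|_{V_0}=w_1$. This is a bit more elementary and self-contained than the paper's version, since it does not go through the flow interpretation of the hypothesis. For \eqref{eq:xpsi formula} both arguments coincide: compute $X[g\circ\Pi](h)=\partial_xg(u)-y(h)\partial_zg(u)$, observe that this is $\nabla_fg(u)$ on $\Gamma$, differentiate \eqref{eq:psi formula} by $X$, and use $Xy=0$ together with $y(h)=f(u)$ to kill the extra terms. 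Your bookkeeping (grouping as $\widetilde{w_2}+(y-\widetilde{f})\widetilde{w_1}$) differs cosmetically from the paper's grouping $(w_2-fw_1)\circ\Pi + y\,(w_1\circ\Pi)$ but lands on the same cancellation.
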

\begin{proof}
  Note that $\Pi_*(X_p)=(1,0,-y(p))$. In particular, when $g\in \Gamma$, $\Pi_*(X_g)=(\nabla_f)_g$.

  Let $h\in \HH$ and $u=\Pi(h)$. Since $\phi_t$ sends cosets of $\langle Y\rangle$ to cosets of $\langle Y\rangle$, we have $\Pi(\phi_t(h))=\Pi(\phi_t(u))$ for all $h\in \HH$. Therefore,
  $$W(u)= \Pi_*(V_\psi(u)) = \partial_t[\Pi(\phi_t(u))]\big|_{t=0} = \partial_t[\Pi(\phi_t(h))]\big|_{t=0} = \Pi_*(V_\psi(h)).$$
  By \eqref{eq:contact field},
  \begin{align*}
    W(u) 
    &= \Pi_*(Y\psi(h) X_h) - \Pi_*(X \psi(h) Y_h) + \Pi_*(\psi(h) Z_h)\\
    &= ( Y\psi(h), 0, - Y\psi(h) y(h)+\psi(h)).
  \end{align*}
  In particular, $w_1(u)=Y\psi(h)$ and
  $$w_2(u) = \psi(h) + Y\psi(h) (f(u)-y(h))=\psi(h) + (f(u)-y(h)) w_1(u),$$
  which proves \eqref{eq:psi formula}.  
  
  Differentiating \eqref{eq:psi formula}, we find 
  \begin{equation}\label{eq:xpsi intermediate}
    X\psi(h)=\Pi_*(X_h)[w_2-fw_1] + y(h) \Pi_*(X_h)[w_1].
  \end{equation}
  When $h\in \Gamma$, we have $y(h)=f(u)$ and   $\Pi_*(X_h) = (\nabla_f)_u$, so 
  $$X\psi(h)=\nabla_f[w_2 - f w_1] + f \nabla_f w_1 = \nabla_fw_2 - \nabla_ff \cdot w_1,$$
  where all functions on the right are evaluated at $u$.
\end{proof}

\begin{lemma}\label{lem:domain to range} 
  With notation as in Theorem~\ref{thm:fvf domain smooth}, let $f_t\from U\to \R$ be such that $\Gamma_t=\Gamma_{f_t}$ and let $F(u,t)=f_t(u)$.  Let $u\in V_0$. If $f$ is smooth in a neighborhood of $u$, then 
  $$\left.\frac{\ud}{\ud t} f_t(\bar{\phi}_t(u))\right|_{t=0}=-\nabla_f w_2(u) + w_1(u) \cdot \nabla_f f(u)$$
  and
  $$\left.\frac{\ud}{\ud t} \nabla_{f_t} f_t(\bar{\phi}_t(u))\right|_{t=0}= -\nabla_f^2w_2(u) +w_1(u) \cdot \nabla_f^2f(u).$$
\end{lemma}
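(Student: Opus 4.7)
Both identities come from tracking the characteristic curve through $u$ and its image under the flow. Let $\gamma\from I\to V_0$ be the unique integral curve of $\nabla_f$ with $\gamma(0)=u$, parametrized with unit $x$--speed, and set $\lambda=\Psi_f\circ\gamma$. Then $\lambda$ is a horizontal curve in $\Gamma$ with $(x\circ\lambda)'(s)=1$ and $(y\circ\lambda)'(s)=\nabla_f f(\gamma(s))$. Let $\lambda_t=\phi_t\circ\lambda$; since $\phi_t$ is contact, $\lambda_t$ is horizontal in $\Gamma_t$.

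For the first formula, the key observation is that $\phi_t$ preserves cosets of $\langle Y\rangle$, so $\phi_t(\Psi_f(u))=\Psi_{f_t}(\bar{\phi}_t(u))$, whence
$$f_t(\bar{\phi}_t(u))=y\bigl(\phi_t(\Psi_f(u))\bigr).$$
Differentiating at $t=0$ and using \eqref{eq:contact field} gives $-X\psi(\Psi_f(u))$, which by \eqref{eq:xpsi formula} equals $-\nabla_f w_2(u)+w_1(u)\nabla_f f(u)$.

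For the second formula, I use that the intrinsic gradient at a point of an intrinsic graph equals the slope $(y\text{-speed})/(x\text{-speed})$ of any horizontal tangent there, so
$$\nabla_{f_t}f_t(\bar{\phi}_t(u))=\frac{(y\circ\lambda_t)'(0)}{(x\circ\lambda_t)'(0)}.$$
Writing $a(t)=(x\circ\lambda_t)'(0)$ and $b(t)=(y\circ\lambda_t)'(0)$, we have $a(0)=1$ and $b(0)=\nabla_f f(u)$, so
$$\frac{\ud}{\ud t}\frac{b(t)}{a(t)}\bigg|_{t=0}=b'(0)-\nabla_f f(u)\cdot a'(0).$$
Swapping $\partial_t$ and $\partial_s$ (justified by smoothness), the definition of $V_\psi$ yields
$$a'(0)=\frac{\ud}{\ud s}\bigl[(Y\psi)(\lambda(s))\bigr]_{s=0},\qquad b'(0)=-\frac{\ud}{\ud s}\bigl[(X\psi)(\lambda(s))\bigr]_{s=0}.$$
By Lemma~\ref{lem:prelim 1}, $Y\psi(\lambda(s))=w_1(\gamma(s))$ and, since $\lambda(s)\in\Gamma$, $X\psi(\lambda(s))=\nabla_f w_2(\gamma(s))-\nabla_f f(\gamma(s))\,w_1(\gamma(s))$. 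Because $\gamma$ is an integral curve of $\nabla_f$, differentiating along $\gamma$ is exactly applying $\nabla_f$, so
$$a'(0)=\nabla_f w_1(u),\qquad b'(0)=-\nabla_f^2 w_2(u)+\nabla_f^2 f(u)\,w_1(u)+\nabla_f f(u)\,\nabla_f w_1(u).$$
Substituting back, the two occurrences of $\nabla_f f(u)\,\nabla_f w_1(u)$ cancel and we obtain $-\nabla_f^2 w_2(u)+w_1(u)\nabla_f^2 f(u)$, as desired.

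The only subtle point is that $\lambda_t$ is not in general parametrized with unit $x$--speed, which forces the quotient computation and is the source of the $\nabla_f w_1$ term in $a'(0)$; the clean final answer depends on noticing that this term cancels the $\nabla_f f\cdot\nabla_f w_1$ term sitting inside $b'(0)$ coming from $\nabla_f[\nabla_f f\cdot w_1]$. Everything else is the chain rule together with the identification of $X\psi$ and $Y\psi$ on $\Gamma$ supplied by Lemma~\ref{lem:prelim 1}.
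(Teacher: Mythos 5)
Your proof is correct and follows essentially the same route as the paper: the first formula comes from $f_t(\bar{\phi}_t(u))=y(\phi_t(\Psi_f(u)))$ plus \eqref{eq:contact field} and \eqref{eq:xpsi formula}, and the second comes from pushing forward the unit $x$--speed horizontal curve through $\Psi_f(u)$, applying the quotient rule at $t=0$, commuting mixed partials, and invoking Lemma~\ref{lem:prelim 1} for $X\psi$ and $Y\psi$ on $\Gamma$. The paper phrases the same computation through the tangent vector $T_p=X_p+\nabla_ff(u)Y_p$ rather than your $a(t),b(t)$, but the cancellation of the $\nabla_ff\cdot\nabla_fw_1$ term is identical.
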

\begin{proof}
  Let $p=\Psi_f(u)$ so that $f(u)=y(p)$. Then $f_t(\bar{\phi}_t(u))=y(\phi_t(p))$, so 
  \begin{multline*}
    \left.\frac{\ud}{\ud t} f_t(\bar{\phi}_t(u))\right|_{t=0} = \left.\frac{\ud}{\ud t} y(\phi_t(p))\right|_{t=0} = y(V_\psi(p)) = - X\psi(p) \\
    = - \nabla_fw_2(u) + w_1(u) \nabla_ff(u),
  \end{multline*}
  as desired.
 
  Let $\gamma\from (-\epsilon,\epsilon)\to \Gamma$ be the horizontal curve in $\Gamma$ such that $\gamma(0)=p$ and $\gamma$ has unit $x$--speed (i.e., $x(\gamma(s))=x(p)+s$).  For any $t$, $\phi_t\circ \gamma$ parametrizes the horizontal curve in $\Gamma_t$ through $\phi_t(p)$, so
  $$\nabla_{f_t} f_t(\bar{\phi}_t(u)) = \frac{(y\circ \phi_t\circ \gamma)'(0)}{(x\circ \phi_t\circ \gamma)'(0)}.$$
  Let $T_p= \gamma'(0) = X_p + \nabla_ff(u) Y_p$.
  Commuting derivatives with respect to $s$ and $t$,
  $$\left.\frac{\ud}{\ud t} (x\circ \phi_t\circ \gamma)'(0) \right|_{t=0} = \partial_t \partial_s\left[x(\phi_t(\gamma(s)))\right](0,0) = \partial_s\left[x(V_\psi(\gamma(s)))\right](0) = T_p[x\circ V_\psi]$$
  and likewise $\frac{\ud}{\ud t} (y\circ \phi_t\circ \gamma)'(0) \big|_{t=0} = T_p[y\circ V_\psi]$.

  Furthermore, $\Pi_*(T_p) = \Pi_*(X_p) = (\nabla_f)_u$ and $x(V_\psi(p))=w_1(\Pi(p))$, so
  $$T_p[x\circ V_\psi] = T_p[w_1\circ \Pi] = \Pi_*(T_p)[w_1] = \nabla_f w_1(u).$$
  By \eqref{eq:contact field} and Lemma~\ref{lem:prelim 1}, $y\circ V_\psi= -X[\psi] = -\left(\nabla_fw_2 - \nabla_ff\cdot w_1\right)\circ \Pi$, so 
  \begin{multline*}
    T_p[y\circ V_\psi] = -\Pi_*(T_p)\left[\nabla_fw_2 - \nabla_ff\cdot w_1\right] = -\nabla_f\left[\nabla_f w_2 - \nabla_ff \cdot w_1\right](u) \\
    = \left(- \nabla^2_f w_2 + \nabla^2_f f \cdot w_1 + \nabla_ff \cdot \nabla_f w_1\right)(u)
  \end{multline*}
  By our choice of parametrization, $(x\circ \gamma)'(0)=1$ and $(y\circ \gamma)'(0)=\nabla_ff(u)$, so
  \begin{align*}
    \left.\frac{\ud}{\ud t} \nabla_{f_t} f_t(\bar{\phi}_t(u)) \right|_{t=0} 
    & = \frac{T_p[y\circ V_\psi] \cdot (x\circ \gamma)'(0) - (y\circ \gamma)'(0) \cdot T_p[x\circ V_\psi]}{(x\circ \gamma)'(0)^2}\\
    & = T_p[y\circ V_\psi] - \nabla_f f(u) \cdot T_p[x\circ V_\psi] \\
    & = -\nabla^2_f w_2(u) + \nabla^2_f f(u) \cdot w_1(u). 
  \end{align*}
\end{proof}

Theorem~\ref{thm:fvf domain smooth} follows.

\begin{proof}[{Proof of Theorem~\ref{thm:fvf domain smooth}}]
  Let
  $$F(t)=E_{\bar{\phi}_t(U)}(\Gamma_t)=\frac{1}{2} \int_{U} \left(\nabla_{f_t}f_t(\bar{\phi}_t(u))\right)^2 J_{\bar{\phi}_t}(u) \ud \mu(u)$$
  where $J_{\bar{\phi}_t}$ is the Jacobian determinant of $\bar{\phi}$. We have
  \begin{equation}\label{eq:Djt}
    \frac{\ud}{\ud t} J_{\bar{\phi}_t}\bigg|_{t=0} = \div_{V_0} W = \partial_x[x\circ W]+ \partial_z[z\circ  W]= \partial_xw_1 + \partial_z[w_2-fw_1].
  \end{equation}
  Exchanging derivative and integral and using Lemma~\ref{lem:domain to range},
  \begin{equation}
    \label{eq:fvf domain 1}
    F'(0) =\int_{U}\left(-\nabla_f^2w_2 + w_1 \cdot \nabla_{f}^2f\right)\nabla_{f}f + \frac{1}{2} (\nabla_ff)^2 \div_{V_0} W \ud \mu,
  \end{equation}
  Substituting \eqref{eq:Djt} into \eqref{eq:fvf domain 1} gives us $F' =A_1+A_2.$

  \HERE
  
  In order to prove \eqref{eq:fvf domain full}, it suffices to show that there are $\epsilon_0=\epsilon_0(U, \psi, \|f\|_{L_\infty(U)})>0$ and $C=C(U, \psi, \|f\|_{L_\infty(U)})>0$ such that $|F''(t)|\le C(E_{U}(\Gamma) +\mu(U))$ for all $t\in [-\epsilon_0,\epsilon_0]$.  Let $K_u(t)=J_{\bar{\phi}_t}(u)$,   $N_u(t)=\nabla_{f_t}f_t(\bar{\phi}_t(u))$.  Then
  \begin{equation}\label{eq:F''}
    F''=\frac{1}{2} \int_{U} \left(N_u^2 K_u\right)'' \ud \mu= \int_{U} \left((N_u')^2 +N_uN_u''\right)K_u+ 2N_uN_u'K_u'+\frac{N_u^2K_u''}{2} \ud \mu.
  \end{equation}
  By continuity, there is a $c_1(\psi)>0$ such that $\max\{|K_u(t)|, |K_u'(t)|, |K_u''(t)|\} \le c_1(\psi)$ for all $u\in U$ and $t\in [-\epsilon,\epsilon]$.

  Let $M=\{vY^r\in \HH\mid v\in U, |r|\le \|f\|_{L_\infty(U)}\}$. Since $\phi_0=\id_\HH$, $X[x\circ \phi_0]=1$; by continuity, we may suppose that $\epsilon_0$ is sufficiently small (depending on $\psi$, $U$, and $\|f\|_{L_\infty(U)}$) that $X[x\circ \phi_t](q)>\frac{1}{2}$ for all $q\in M$.  Choose $c_2$ such that
  $$c_2 \ge \big|\partial_t^i V[v\circ \phi_t](q)\big|$$
  for all $q\in M$, $i=0,1,2$, $V=X,Y$, and $v=x,y$. 

  Let $u\in U$ and let $p=\Psi_f(u)$.  Let $T$ be the left-invariant vector field $T = X + \nabla_ff(u) Y$.  As in the proof of Lemma~\ref{lem:domain to range}, we have
  $$N_u(t) = \frac{T[y\circ \phi_t](p)}{T[x\circ \phi_t](p)} =: \frac{\omega(t)}{\chi(t)}.$$

  Since $\phi_t$ sends cosets of $\langle Y \rangle$ to cosets of $\langle Y \rangle$, we have $Y[x\circ \phi_t]=0$. Therefore, $\chi(t)= X[x\circ \phi_t](p) >\frac{1}{2}$. By our choice of $c_2$, for $i=0,1,2$ and $|t|\le \epsilon_0$,
  $$\big|\omega^{(i)}(t)\big| = \left|\partial_t^iT[y\circ \phi_t](p)\right| \le  c_2(1+\nabla_f f(u)),$$
  $$\big|\omega^{(i)}(t)\big| = \left|\partial_t^iX[y\circ \phi_t](p)+\nabla_ff(u) \cdot \partial_t^iY[y\circ \phi_t](p)\right| \le  c_2(1+\nabla_f f(u)),$$
  and
  $$\big|\chi^{(i)}(t)\big| =\big|\partial_t^iX[y\circ \phi_t](p)\big| \le c_2.$$
  Thus for $|t|\le \epsilon_0$,
  \begin{equation*}
    \left|N_u'(t)\right|
    =\left|\frac{\omega'(t) \chi(t) - \omega(t) \chi'(t) }{\chi(t)^2}\right|\lesssim c_2(1+\nabla_f f(u)),
  \end{equation*}
  and
  \begin{equation*}
    \left|N_u''(t)\right|
    = \left|\omega(t)\biggl(\frac{2\chi'(t)^2}{\chi(t)^3} - \frac{\chi''(t)^2}{\chi(t)^2}\biggr) - 2 \omega'(t)\frac{\chi'(t)}{\chi(t)^2} + \frac{\omega''(t)}{\chi(t)}\right|
    \lesssim c_2(1+\nabla_ff(u)).
  \end{equation*}
  We apply these bounds to \eqref{eq:F''} to get
  $$\left|F''(t)\right|\lesssim \int_U c_1 c_2^2 (1+\nabla_f f(u))^2 \ud \mu(u) \le C(\mu(U) + E_U(\Gamma))$$
  for some $C=C(\psi, U, \|f\|_{L_\infty(U)})$. Equation \eqref{eq:fvf domain full} then follows from Taylor's theorem.
  
  It remains to consider the case that $\supp W\Subset U$.  Let $K\subset U$ be a closed set with piecewise-smooth boundary that contains $\supp W$.
  We calculate
  \begin{align}
\notag    A_1 
    &= \frac{1}{2} \int_{K} \nabla_f\left[(\nabla_ff)^2\right]\cdot w_1 + (\nabla_ff)^2\left( \partial_xw_1 - f \partial_zw_1 - w_1 \partial_zf\right)\ud \mu\\
    \label{eq:fvf domain intermediate}
    &= \frac{1}{2} \int_{K} \nabla_f\left[(\nabla_ff)^2\right]\cdot w_1 + (\nabla_ff)^2 \cdot \nabla_f  w_1 - (\nabla_ff)^2\cdot w_1 \cdot \partial_zf\ud \mu.
  \end{align}
  Corollary~\ref{cor:integration by parts} with $g=(\nabla_ff)^2$ and $h=w_1$ implies that $A_1=0$.

  We decompose $A_2$ as
  $$A_2  =\int_{K} -\nabla_f^2w_2 \cdot \nabla_ff \ud \mu + \int_{K} \frac{1}{2}  (\nabla_ff)^2\cdot \partial_z w_2 \ud \mu =: Q_1 + Q_2.$$
  By Corollary~\ref{cor:integration by parts} with $g=\nabla_{f}w_2$ and $h=\nabla_ff$, 
  $$Q_1 = - \int_{K} \nabla_f w_2 \cdot \nabla_ff \cdot \partial_zf\ud\mu + \int_{K} \nabla_f^2 f \cdot \nabla_fw_2 \ud \mu := Q_3 + Q_4.$$
  Applying Corollary~\ref{cor:integration by parts} again with $g=w_2$, $h=\nabla_ff \cdot \partial_zf$, we find
  \begin{multline*}
    Q_3 = \int_{K}- w_2 \cdot \nabla_ff \cdot (\partial_zf)^2 + w_2 \cdot \nabla_f\left[\nabla_ff \cdot \partial_zf\right] \ud \mu\\
    = \int_{K} w_2 \cdot \nabla_ff \cdot \left(-(\partial_zf)^2 + \nabla_f[\partial_zf]\right)+w_2\cdot \partial_zf \cdot \nabla_f^2f \ud \mu.
  \end{multline*}
  Furthermore, 
  $$[\nabla_f , \partial_z]=-[\partial_z, \partial_x-f\partial_z]=\partial_z f\cdot \partial_z,$$
  so $-(\partial_zf)^2 + \nabla_f[\partial_zf]= \partial_z[\nabla_ff]$.  Thus
  \begin{equation}
    Q_3
    = \int_{K} w_2 \cdot \nabla_ff \cdot \partial_z [\nabla_f f]+ w_2\cdot \partial_zf \cdot \nabla_f^2f \ud \mu.
  \end{equation}

  Integrating $Q_2$ by parts gives
  $$Q_2=\int_{K} \frac{1}{2}  (\nabla_ff)^2\cdot \partial_z w_2 \ud \mu = - \int_{K} w_2\cdot \nabla_ff \cdot \partial_z[\nabla_ff] \ud \mu,$$
  so
  $$A_2  = Q_3+Q_4+Q_2 = \int_{K^+} \left(w_2\cdot \partial_zf + \nabla_fw_2\right)\cdot \nabla_f^2f \ud \mu.$$
  This is the first part of \eqref{eq:fvf domain smooth compact}. Applying 
  Corollary~\ref{cor:integration by parts} once more with $g=w_2$ and $h=\nabla^2_ff$, we get
  $$A_2 = \int_{U} w_2\cdot\bigl(2 \partial_zf \cdot \nabla_f^2 f - \nabla_f^3 f\bigr) \ud \mu.$$
\end{proof}

We will prove a first variation formula for intrinsic Lipschitz graphs by approximating them by smooth graphs and applying Theorem~\ref{thm:fvf domain smooth}. Note, however, that when $f$ is intrinsic Lipschitz, $\nabla_f w$ is generally not smooth, so $\nabla_f^2w$ may be undefined. We thus introduce a new operator. For any intrinsic Lipschitz function $f\from V_0\to \R$, any smooth $w\from V_0\to \R$, and any $p\in V_0$, let $\lambda_p(t)=\Psi_f(p)(X+\nabla_ff Y)^t$ be the intrinsic tangent line to $\Gamma_f$ at $\Psi_f(p)$ and let 
\begin{equation}\label{eq:def Delta}
  \Delta_f w(p) = (w\circ \Pi\circ \lambda_p)''(0).
\end{equation}
This is defined almost everywhere on $V_0$. When $f$ is smooth, the first two derivatives of the characteristic curve through $p$ agree with the first two derivatives of $\lambda_p$, so $\Delta_f w=\nabla_f^2w$. In general, if $p=(x_0,0,z_0)$, then 
$$\Pi(\lambda_p(t)) = \left(x_0+t, 0, z_0- f(p) t -\frac{t^2}{2} \nabla_ff(p)\right),$$
so
\begin{align}
\notag  \Delta_f w(p)
  &= \left(\partial_x -  f(p)\partial_z\right)^2[w](p) - \nabla_f f(p)  \cdot \partial_z w(p) \\
\notag  & = \nabla_f\left[\partial_x w - f(p) \partial_z w\right](p) - \nabla_f f(p)\cdot \partial_z w(p)\\
  \label{eq:delta f expand}
  & = \nabla_f[\partial_x w](p) - f(p)\nabla_f[\partial_z w](p) - \nabla_f f(p)\cdot \partial_z w(p).
\end{align}

\begin{thm}[First variation formula for intrinsic Lipschitz graphs]\label{thm:fvf domain lip}
  Let $\Gamma=\Gamma_f$ be the intrinsic graph of an intrinsic Lipschitz function $f\from V_0\to \R$. Let $\phi_t$, $\psi$, $\bar{\phi}_t$, and $W$ be as in Theorem~\ref{thm:fvf domain smooth}. For each $t\in [-\epsilon,\epsilon]$, let $f_t$ be the function such that $\Gamma_{f_t}=\phi_t(\Gamma)$.

  Let $U\subset V_0$ be a bounded open subset and suppose that $\supp W\Subset U$. For any $w\in C^\infty(U)$ and any intrinsic Lipschitz function $g$, let
  \begin{align*}
    B_2(g, w) & = -\Delta_g w \cdot \nabla_{g}g + \frac{1}{2}  (\nabla_gg)^2\cdot \partial_z w,\\
    B_1(g, w) & = g B_2(g, w) - \frac{3}{2} (\nabla_gg)^2 \cdot \nabla_{g} w.
  \end{align*}

  Then there is a $C=C(\psi, U, \|f\|_{L_\infty(U)})$ such that 
  \begin{equation}\label{eq:fvf domain lip}
    \left|E_{U}(f_t) - E_{U}(f) - t \int_U  B_1(f,x\circ W) + B_2(f, z \circ W) \ud\mu\right|\le C \left(E_{U}(f) +\mu(U)\right) t^2
  \end{equation}
  for all $|t|\le \epsilon$.
\end{thm}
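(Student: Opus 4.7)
The plan is to approximate $f$ by smooth functions via Theorem~\ref{thm:CMPSC approx} and pass to the limit in Theorem~\ref{thm:fvf domain smooth}. The key preliminary is an algebraic identity showing that, although the smooth first variation $A_2(g,w_2)$ with $w_2 = g(x\circ W) + (z\circ W)$ formally involves $\nabla_g^2 g$, after one integration by parts it can be rewritten entirely in terms of the operator $\Delta_g$ applied to the fixed smooth functions $x\circ W$ and $z\circ W$, which is the form that makes sense for intrinsic Lipschitz $f$.

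Step 1 (algebraic identity). For smooth $g$, set $w_1 = x\circ W$, $u = z\circ W$, $w_2 = g w_1 + u$, and use $\Delta_g = \nabla_g^2$. Expanding $\nabla_g^2 w_2$ and $\partial_z w_2$ by the Leibniz rule, a direct computation gives the pointwise identity
$$B_2(g,w_2) - B_1(g,w_1) - B_2(g,u) = -\tfrac{1}{2}\nabla_g\bigl[w_1(\nabla_g g)^2\bigr] + \tfrac{1}{2}(\nabla_g g)^2(\partial_z g) w_1.$$
Since $\supp w_1\Subset U$, Lemma~\ref{lem:nabla f path rule} applied to $w_1(\nabla_g g)^2$ shows the right-hand side integrates to zero over $U$. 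Combined with $A_1(g,w_1)=0$, already established in the compactly supported case inside the proof of Theorem~\ref{thm:fvf domain smooth}, and with $A_2(g,w_2) = \int_U B_2(g,w_2)\,d\mu$ when $g$ is smooth, this yields
$$A_1(g,w_1)+A_2(g,w_2) = \int_U \bigl[B_1(g,x\circ W) + B_2(g,z\circ W)\bigr]\ud\mu$$
for every smooth $g$.

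Step 2 (approximation and uniform bounds). Fix $\omega$ with $\overline{U}\Subset\omega$ and apply Theorem~\ref{thm:CMPSC approx} to obtain smooth $f_k\in C^\infty(\omega)$ with $f_k\to f$ uniformly, $|\nabla_{f_k}f_k|\le \|\nabla_f f\|_{L_\infty(\omega)}$, and $\nabla_{f_k}f_k\to\nabla_f f$ almost everywhere. Let $f_{k,t}$ be the function with $\Gamma_{f_{k,t}}=\phi_t(\Gamma_{f_k})$. Applying Theorem~\ref{thm:fvf domain smooth} to $f_k$ and invoking Step~1 gives, for $|t|\le\epsilon_0$,
$$\left|E_U(f_{k,t})-E_U(f_k)- t\int_U \bigl[B_1(f_k, x\circ W)+B_2(f_k, z\circ W)\bigr]\ud\mu\right|\le C\bigl(E_U(f_k)+\mu(U)\bigr)t^2,$$
with $C = C(\psi, U, \|f_k\|_{L_\infty(U)})$ and $\epsilon_0$ both uniform in $k$, since $\|f_k\|_{L_\infty(U)}$ is uniformly bounded.

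Step 3 (limit $k\to\infty$). Three convergences close the argument. First, $(\nabla_{f_k}f_k)^2\to(\nabla_f f)^2$ a.e. with uniform $L_\infty$ bound, so dominated convergence gives $E_U(f_k)\to E_U(f)$. Second, formula \eqref{eq:delta f expand} shows $\Delta_{f_k}w\to\Delta_f w$ a.e. with uniform bound for $w\in\{x\circ W, z\circ W\}$, hence $B_i(f_k,\cdot)\to B_i(f,\cdot)$ a.e. boundedly, and dominated convergence yields convergence of the integral. Third, $E_U(f_{k,t})\to E_U(f_t)$: the proof of Lemma~\ref{lem:domain to range} shows that for any intrinsic Lipschitz $g$ and $p = v\cdot Y^{g(v)}$,
$$\nabla_{g_t}g_t(\bar{\phi}_t(v)) = \frac{X[y\circ\phi_t](p)+\nabla_g g(v)\,Y[y\circ\phi_t](p)}{X[x\circ\phi_t](p)},$$
where $X[x\circ\phi_t]\ge 1/2$ for $|t|\le\epsilon_0$ and the derivatives of $\phi_t$ are uniformly bounded on compact sets. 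Thus $\nabla_{f_{k,t}}f_{k,t}\circ\bar{\phi}_t\to\nabla_{f_t}f_t\circ\bar{\phi}_t$ a.e. with uniform bound, and since $\bar{\phi}_t=\id$ outside $\supp W$ (so $\bar{\phi}_t(U)=U$), a further dominated convergence concludes. Letting $k\to\infty$ in the inequality of Step~2 yields \eqref{eq:fvf domain lip}.

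The main obstacle is the third convergence in Step~3. Unlike the infinitesimal computation in Lemma~\ref{lem:domain to range}, one needs the finite-$t$ behavior of the contact flow to depend continuously on the approximating sequence without any second-derivative information on $f$. The explicit slope formula above, together with the a.e. convergence of $\nabla_{f_k}f_k$ and the uniform $L_\infty$ bounds, makes this tractable, but verifying the uniform bound carefully (which is what forces the $C$ in the statement to depend on $\|f\|_{L_\infty(U)}$) is the delicate part of the proof.
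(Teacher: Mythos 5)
Your proposal is correct and follows essentially the same route as the paper: approximate $f$ by smooth $f^k$ via Theorem~\ref{thm:CMPSC approx}, apply Theorem~\ref{thm:fvf domain smooth} to each $f^k$, and pass to the limit by dominated convergence, using the finite-$t$ slope formula to get a.e. convergence of $\nabla_{f^k_t}f^k_t$. Your Step~1 is the pointwise-integrand version of the paper's Lemma~\ref{lem:B1 and B2}, which proves the same identity directly at the integral level via Corollary~\ref{cor:integration by parts}; both reduce to the observation that $\Delta_g[gw]-g\Delta_g[w]=2\nabla_gg\cdot\nabla_gw+\nabla_g^2g\cdot w$, so the difference is presentational rather than structural.
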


The function $B_2$ above is simply the integrand in the definition of $A_2$, with $\nabla^2_fw$ replaced by $\Delta_fw$. The following lemma shows that when $f$ is smooth, Theorem~\ref{thm:fvf domain lip} follows from Theorem~\ref{thm:fvf domain smooth}.
\begin{lemma}\label{lem:B1 and B2}
  Let $U\subset V_0$ be a bounded open subset. Let $B_1$ and $B_2$ be as in Theorem~\ref{thm:fvf domain lip}.  
  When $f$ is smooth and $w\in C^\infty_c(U)$,
  \begin{equation}\label{eq:B1 eq B2}
    \int_U B_1(f, w) \ud \mu = \int_U B_2(f, fw)\ud \mu,
  \end{equation}
  and $\int_U B_2(f,w)\ud \mu =A_2(f,w)$, where $A_2$ is as in Theorem~\ref{thm:fvf domain smooth}. Consequently, if $W$ is a smooth vector field with $\supp W\Subset U$ and if $w_2=(x\circ W)\cdot f + z\circ W$, then
  $$A_2(f,w_2)=\int_U B_2(f, w_2)\ud \mu = \int_U B_1(f, x\circ W) + B_2(f,z\circ W)\ud \mu.$$
\end{lemma}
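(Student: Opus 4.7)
The plan is to dispose of the easy identity first and then reduce everything else to a single algebraic manipulation followed by an integration by parts. When $f$ is smooth, the definition \eqref{eq:def Delta} gives $\Delta_f w = \nabla_f^2 w$, so the integrand of $B_2(f,w)$ literally coincides with the integrand appearing in $A_2(f,w)$ in Theorem~\ref{thm:fvf domain smooth}. Integrating over $U$ therefore gives $\int_U B_2(f,w)\ud\mu = A_2(f,w)$ for every $w\in C^\infty_c(U)$ with no work.

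The heart of the lemma is the identity \eqref{eq:B1 eq B2}. To establish it, I would expand $B_2(f,fw)$ using the product rules $\nabla_f^2[fw] = w\nabla_f^2 f + 2\nabla_f f\cdot \nabla_f w + f\nabla_f^2 w$ and $\partial_z[fw] = w\partial_z f + f\partial_z w$ to separate the terms containing an explicit factor of $f$ from those that do not. The terms with a factor of $f$ recombine exactly into $f B_2(f,w)$, and what remains is
\[
 B_2(f,fw) - fB_2(f,w) = -w\,\nabla_f^2 f\cdot \nabla_f f \;-\; 2(\nabla_f f)^2\nabla_f w \;+\; \tfrac{1}{2}w(\nabla_f f)^2\partial_z f.
\]
Subtracting $-\tfrac{3}{2}(\nabla_f f)^2\nabla_f w$ (so as to compare with $B_1(f,w)$) and using $2\nabla_f^2 f\cdot \nabla_f f = \nabla_f[(\nabla_f f)^2]$ collapses the right-hand side to
\[
 B_2(f,fw) - B_1(f,w) = -\tfrac{1}{2}\nabla_f\!\bigl[w(\nabla_f f)^2\bigr] + \tfrac{1}{2}w(\nabla_f f)^2\partial_z f.
\]
Since $w\in C^\infty_c(U)$, the function $w(\nabla_f f)^2$ has compact support in $U$, so Lemma~\ref{lem:nabla f path rule} (applied on a compact subset of $U$ containing $\supp w$ with smooth boundary on which $w$ vanishes) gives $\int_U \nabla_f[w(\nabla_f f)^2]\ud\mu = \int_U w(\nabla_f f)^2\partial_z f\ud\mu$. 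The two remaining terms cancel after integration, proving \eqref{eq:B1 eq B2}.

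For the final statement, observe that $B_2(f,\cdot)$ is linear in its second argument, since $\Delta_f$ and $\partial_z$ are linear. With $w_1 = x\circ W$ and $w_2 = w_1 f + z\circ W$, linearity gives
\[
 \int_U B_2(f,w_2)\ud\mu = \int_U B_2(f,w_1 f)\ud\mu + \int_U B_2(f, z\circ W)\ud\mu.
\]
The hypothesis $\supp W\Subset U$ implies $w_1\in C^\infty_c(U)$, so the already-proved identity \eqref{eq:B1 eq B2} converts the first term on the right into $\int_U B_1(f,w_1)\ud\mu$, and combined with $A_2(f,w_2) = \int_U B_2(f,w_2)\ud\mu$ this yields the formula claimed in the lemma.

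The only place anything subtle could go wrong is the integration by parts, which requires us to check that all the integrability hypotheses of Lemma~\ref{lem:nabla f path rule} and Corollary~\ref{cor:integration by parts} are met. Since $f$ is smooth on the open set $U$ and $w$ (hence every factor we encounter) is compactly supported there, this is routine: shrink $U$ to a slightly smaller domain with piecewise smooth boundary that still contains $\supp w$, on which $f$ and all its derivatives are bounded. I expect this to be the only point requiring care, and it is purely technical.
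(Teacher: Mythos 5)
Your proof is correct and takes essentially the same route as the paper: expand $B_2(f,fw)-fB_2(f,w)$, recognize a total $\nabla_f$-derivative, and integrate by parts using the compact support of $w$ (the paper applies Corollary~\ref{cor:integration by parts} with $g=w$, $h=\tfrac12(\nabla_ff)^2$; you apply Lemma~\ref{lem:nabla f path rule} directly to $g=w(\nabla_ff)^2$, which is the same calculation repackaged). The remaining claims in the lemma follow, as you say, from linearity of $B_2$ in its second argument and the identity just established.
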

\begin{proof}
  Since $f$ is smooth, we have $\Delta_f w=\nabla_f^2w$ and $\int_U B_2(f,w)\ud \mu =A_2(f,w)$.
  Let $\lambda=\nabla_{f}f$. Then
  $$\Delta_f[fw]-f \Delta_f[w] = 2 \nabla_f f\cdot \nabla_f w + \nabla^2_{f} f\cdot w = 2 \lambda \cdot \nabla_f w + \nabla_{f} \lambda \cdot w,$$
  so
  $$B_2(f, f w) - fB_2(f, w) = - 2 \lambda^2 \cdot \nabla_{f} w - \lambda \cdot \nabla_{f}\lambda \cdot w + \frac{1}{2} \lambda^2 \cdot \partial_z f \cdot w.$$

  By Corollary~\ref{cor:integration by parts}, with $f=w$ and $h=\frac{1}{2} \lambda^2$,
  \begin{align*}
    \int_U B_2(f, f w)\ud\mu
    &= \int_U f B_2(f, w) - 2 \lambda^2 \cdot \nabla_{f} w - \lambda \nabla_{f}\lambda \cdot w + \frac{1}{2} \lambda^2 \cdot \partial_z f \cdot w\ud\mu \\
    &= \int_U f B_2(f, w) - \frac{3}{2} \lambda^2 \cdot \nabla_{f} w \ud\mu\\
    &= \int_U B_1(f, w)\ud\mu,
  \end{align*}
  as desired.
\end{proof}

\begin{proof}[Proof of Theorem~\ref{thm:fvf domain lip}]
  Let $f^k$ be a sequence of approximating smooth functions for $f$ satisfying Theorem~\ref{thm:CMPSC approx}, so that $f^k\to f$ uniformly, there is a $c>0$ such that $\|\nabla_{f^k}f^k\|_\infty< c$ for all $k$, and $\nabla_{f^k}f^k\to \nabla_ff$ pointwise almost everywhere. For $w\in C^\infty(V_0)$, we have
  $$\lim_{k\to \infty} \nabla_{f^k}w = \lim_{k\to \infty} \partial_x w -f^k\partial_z  = \nabla_fw$$
  uniformly and 
  $$\lim_{k\to \infty}\Delta_{f^k} w \stackrel{\eqref{eq:delta f expand}}{=} \lim_{k\to \infty} \nabla_{f^k}[\partial_x w] - \nabla_{f^k} f^k\cdot \partial_z w - f^k\nabla_{f^k}[\partial_z w]= \Delta_fw$$
  pointwise almost everywhere. In particular, if $w\in C^\infty_c(V_0)$, then $B_1(f^k,w)$ and $B_2(f^k,w)$ are bounded by a function of $c$ and $w$ and converge  pointwise a.e.\ to $B_1(f,w)$ and $B_2(f^k,w)$, respectively.

  For each $t\in [-\epsilon,\epsilon]$, the image $\phi_t(\Gamma_{f^k})$ is a smooth intrinsic graph; let $f^k_t$ be such that $\Gamma_{f^k_t}=\phi_t(\Gamma_{f^k})$.
  Let $K$ be a bounded set containing $\Psi_{f^k}(U)$ for every $k$. 
  For all $u\in U$, we have
  $$\left|f^k_t(u)-f_t(u)\right|\le \left|f^k\left(\bar{\phi}_t^{-1}(u)\right)-f\left(\bar{\phi}_t^{-1}(u)\right) \right| \Lip\left(\phi_t|_K\right).$$
  Since $f^k\to f$ uniformly, this implies $f^k_t\to f_t$ uniformly.

  Likewise, for any $t\in [-\epsilon,\epsilon]$, if $\lim_{k\to \infty} \nabla_{f^k}f^k(u)= \nabla_{f}f(u)$, then
  $$\lim_{k\to \infty} \nabla_{f^k_t}f^k_t(\bar{\phi}_t(u))=\nabla_{f_t}f_t(\bar{\phi}_t(u)),$$
  so $\nabla_{f^k_t}f^k_t\to \nabla_{f_t}f_t$ pointwise a.e.
  Thus, by dominated convergence,
  \begin{equation}\label{eq:fvf lip energy limit}
    \lim_{k\to \infty} E_U(f^k_t) = E_U(f_t).
  \end{equation}

  Theorem~\ref{thm:fvf domain smooth} and Lemma~\ref{lem:B1 and B2} imply that there are $\epsilon_0,C>0$ such that for any $k$ and any $t\in [-\epsilon_0, \epsilon_0]$, 
  \begin{align}\label{eq:fvf lip recap}
\notag    \Bigl|E_U(f^k_t)
    &- E_{U}(f^k) - t A_2\big(f^k, (x\circ W)\cdot f + z\circ W\big)\Bigr| \\
\notag    & = \left|E_U(f^k_t)- E_{U}(f^k) - t\int_U B_1\big(f^k, x\circ W\big) + B_2\big(f^k, z\circ W\big)\ud \mu\right| \\
    & \le C \bigl(E_{U}(f^k) +\mu(U)\bigr) t^2.
  \end{align}
  Taking the limit as $k\to \infty$ and using dominated convergence to exchange the integral and the limit, we get
  \begin{equation*}
    \left|E_U(f_t) - E_{U}(f) - t\int_U B_1(f, x\circ W) + B_2(f, z\circ W)\ud \mu \right|\le C \left(E_{U}(\Gamma) +\mu(U)\right) t^2,
  \end{equation*}
  as desired.
\end{proof}
This gives a two-part condition for contact harmonicity for intrinsic Lipschitz functions. Namely, an intrinsic Lipschitz $f$ is contact harmonic on $U$ if and only if
$$\int_U B_1(f, w)\ud \mu=\int_U B_2(f, w)\ud \mu=0$$
for every $w\in C^\infty_c(U)$. In contrast, when $f$ is smooth and $\int_U B_2(f, w)\ud \mu=0$ for all $w\in C^\infty_c(U)$, \eqref{eq:B1 eq B2} implies that
$$\int_U B_1(f, w)\ud\mu = \int_U B_2(f, fw)\ud \mu = 0$$
for all $w\in C^\infty_c(U)$, so the condition on $B_2$ suffices to characterize contact harmonicity. We do not know whether the condition on $B_2$ suffices to characterize contact harmonicity when $f$ is not smooth.



\subsection{Vertical first variation for graphs with herringbone singularities}\label{sec:fvf herring}
In this section, we will refine the formulas in Theorem~\ref{thm:fvf domain smooth} for compactly supported vertical contact variations on a class of intrinsic Lipschitz graphs that are singular along a horizontal curve. We show that if $\Gamma$ is contact harmonic, the horizontal curves near the singularity must satisfy the equal-slope condition \eqref{eq:tau conserv}. 

We first define the class of singularities we are interested in.
\begin{defn}\label{def:smooth herringbone}
  An intrinsic Lipschitz graph with a \emph{smooth herringbone singularity} consists of two smooth intrinsic graphs $\Gamma^+$ and $\Gamma^-$ meeting along a horizontal curve $C$ and satisfying the properties below.
  
  Let $U\subset V_0$ be an open set and let $\Gamma=\Gamma_f\subset \HH$ be an intrinsic Lipschitz graph over $U$.  Suppose that the characteristic nexus of $\Gamma$ is a smooth horizontal curve $C\subset \Gamma$ and let $\gamma\from I\to V_0$, $\gamma(t)=(t,0,\gamma_z(t))$ parametrize $\Pi(C)$.  Suppose that $\gamma$ cuts $U$ into two connected components, $U^+=\{(x,0,z)\in U\mid z>\gamma_z(x)\}$ and $U^-=\{(x,0,z)\in U\mid z<\gamma_z(x)\}$. Let $\Gamma^\pm=\Psi_f(U^\pm)$.

  Suppose that $f$ is smooth on $U^+$ and $U^-$ (but generally not on $\gamma$), so that $\Gamma^+$ and $\Gamma^-$ are foliated by horizontal curves. We require that the foliations extend to the boundary in the following sense:
  \begin{itemize}
  \item
    There is a neighborhood $N$ of $C$ such that the projection $\pi\from \HH\to \R^2$, $\pi(x,y,z)=(x,y)$ restricts to an embedding of $N\cap \Gamma$.
  \item
    Let $H^\pm$ be the horizontal foliation of $\Gamma^\pm$.  Let $M=\pi(N\cap \Gamma)$ and let $M^\pm=\pi(N\cap \Gamma^\pm)$. The projection $\pi_*(H^+)$ can be extended to a smooth foliation $F^+$ defined on a neighborhood of $\overline{M^+}$. Likewise, $\pi_*(H^-)$ extends to a smooth foliation $F^-$ defined on a neighborhood of $\overline{M^-}$. These foliations are transverse to $\pi(C)$ and their tangent lines have bounded slopes.
  \end{itemize}
  Then $C$ is a smooth herringbone singularity of $\Gamma$.
\end{defn}

The behavior of $\Gamma$ near $C$ is governed by the slopes of $\pi(C)$, $F^+$, and $F^-$.  Let $c=(t,c_y,c_z) \from I\to \HH$ be a parametrization of $C$. Let $\sigma^0(t)=c_y'(t)$ be the slope of $\pi\circ c$. Let $\sigma^+(t)$ (resp.\ $\sigma^-(t)$) be the slope of $F^+$ (resp.\ $F^-$) at $\pi(c(t))$. We will see in Lemma~\ref{lem:near sing} that $\sigma^+(t)<\sigma^0(t)<\sigma^-(t)$ for all $t\in I$.

Smooth herringbone singularities are either \emph{left-pointing} or \emph{right-pointing}. Since $\pi$ is a homeomorphism from $N\cap \Gamma$ to $M$, the sets $M^+$ and $M^-$ are separated by the projection $\pi(C)$. If $M^+$ is above $\pi(C)$ (i.e., $M^+=\{(x,y)\in \pi(N)\mid y>c_y(x)\}$), we say $C$ is a right-pointing singularity (because $\pi_*(H^\pm)$ looks like $\ggg$). If $M^+$ is below $\pi(C)$, we say $C$ is a left-pointing singularity (because $\pi_*(H^\pm)$ looks like $\lll$).

We now state the first variation formula for vertical contact variations, i.e., contact variations where the corresponding field $W$ on $V_0$ is vertical.
\begin{thm}[First variation formula with herringbone singularities]\label{thm:fvf herringbone}
  Let $U\subset V_0$ be an open subset, let $f\from U\to \R$, and let $\Gamma=\Gamma_f$ be an intrinsic Lipschitz graph with a smooth herringbone singularity $C$. Let $\sigma^0$, $\sigma^+$, and $\sigma^-$ be as above, and let $\gamma\from I\to V_0$, $\gamma(t)=(t,0,\gamma_z(t))$ be a parametrization of $\Pi(C)$.

  Let $w_2\in C^\infty_c(U)$, let $\psi=w_2\circ \Pi$ be a potential, and let $V_\psi= - (X\psi)Y + w_2Z$ be the corresponding field. We can write $V_\psi=\Pi^*(W)$, where $W=w_2Z$ is a vector field on $V_0$. Let $\phi_t\from \HH\to \HH$, $t\in (-\epsilon,\epsilon)$ be the flow of $V_\psi$, and let $\bar{\phi}_t\from V_0\to V_0$ be the flow of $W$.

  Let $f_t$ be the function such that $\Gamma_{f_t}=\phi_t(\Gamma)$.
  There are $\epsilon_0=\epsilon_0 (U, w_2, \|f\|_{L_\infty(U)})>0$ and $D=D(U, w_2, \|f\|_{L_\infty(U)})>0$ such that 
  \begin{equation}\label{eq:fvf herringbone full}
    \left|E_{U}(f_t) - E_{U}(f) - A_2 t\right|\le D \left(E_{U}(f) +\mu(U)\right) t^2
  \end{equation}
  for all $t\in [-\epsilon_0,\epsilon_0]$, where
  \begin{align*}
    A_2
    &= \int_{U} -\nabla_f^2w_2 \cdot \nabla_{f}f +\frac{1}{2}  (\nabla_ff)^2\cdot \partial_z w_2 \ud \mu\\
    & = \frac{1}{2} \int_{I} w_2(\gamma(s)) \cdot \delta(s) \ud s + \int_{U} \bigl(w_2\cdot \partial_zf + \nabla_fw_2\bigr)\cdot \nabla_f^2f \ud \mu,
  \end{align*}
  and $\delta(s)=(\sigma^+(s)-\sigma^0(s))^2 - (\sigma^0(s)-\sigma^-(s))^2$.

  In particular, if $\Gamma$ is contact harmonic on $U$, then $\delta(s)=0$ and thus $\sigma^0(s)=\frac{\sigma^+(s)+\sigma^-(s)}{2}$ for all $s\in I$. That is, the slope of $\pi(C)$ is the average of the slopes of $F^+$ and $F^-$.
\end{thm}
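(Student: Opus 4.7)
The plan is to derive the two expressions for $A_2$ in sequence. The first expression for $A_2$ and the quadratic error bound \eqref{eq:fvf herringbone full} follow from the proof of Theorem~\ref{thm:fvf domain smooth} essentially verbatim: the pointwise calculations in Lemmas~\ref{lem:prelim 1} and~\ref{lem:domain to range} are valid at every point of $U^+\cup U^-$ (where $f$ is smooth), which exhausts $U$ up to the $\mu$--null set $\gamma$, and the $F''$ estimate depends only on $\|f\|_{L_\infty(U)}$ and the derivatives of $\phi_t$. Since $W=w_2 Z$ is vertical, $w_1\equiv 0$ and $A_1$ vanishes automatically.

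The second expression is obtained as follows. A single application of Corollary~\ref{cor:integration by parts} on each of $U^+$ and $U^-$ with $g=\nabla_f w_2$ and $h=\nabla_f f$ rewrites the first expression as $\int_U \nabla_f w_2\cdot \nabla_f^2 f\,d\mu - \int_U \nabla_f w_2\cdot \nabla_f f\cdot \partial_z f\,d\mu + \tfrac12\int_U (\nabla_f f)^2 \partial_z w_2\,d\mu$. All boundary contributions along $\gamma$ vanish, because the integrand $(fgh,gh)\cdot d\alpha$ reduces to $gh\,(f\,dx+dz)$, and this is identically zero along $\gamma$ thanks to the relation $\gamma_z'(x)=-f(\gamma(x))$ (which follows from the horizontality of $C$). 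Setting $G:=w_2\nabla_f f$ and using the product rule $\nabla_f G=\nabla_f w_2\cdot \nabla_f f + w_2\nabla_f^2 f$, proving the theorem reduces to verifying the identity
\[
  -\int_U \partial_z f\cdot \nabla_f G\,d\mu + \tfrac12\int_U (\nabla_f f)^2 \partial_z w_2\,d\mu = \tfrac12\int_I w_2(\gamma(s))\,\delta(s)\,ds.
\]

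Verifying this last identity is the main obstacle. Naive further integration by parts on the left-hand side is unavailable: applying Corollary~\ref{cor:integration by parts} would produce $\int G(\partial_z f)^2\,d\mu$, which diverges at herringbone singularities where $\partial_z f\sim s^{-1}$. Instead I would compute each integral directly in characteristic coordinates $(t,s)$ adapted to $\gamma$, where $t\in I$ parametrizes $\gamma$ and $s>0$ parametrizes the characteristic curve of $\nabla_f$ emanating from $\gamma(t)$ into $U^\pm$. In these coordinates $\nabla_f=\partial_s$, the Jacobian of $(t,s)\mapsto(x,z)$ satisfies $|J^\pm(t,s)|=|\sigma^0(t)-\sigma^\pm(t)|\,s+O(s^2)$, and both $\partial_z f\cdot|J^\pm|$ and $\partial_z w_2\cdot|J^\pm|$ extend continuously to $s=0$, depending only on $\sigma^0$ and $\sigma^\pm$ at the base point. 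The $s$--integrals of $\partial_s G$ and $\partial_s w_2$ collapse to boundary evaluations at $s=0$ along $\gamma$, giving $-\int_U \partial_z f\cdot\nabla_f G\,d\mu = \int_I[(\sigma^+)^2-(\sigma^-)^2 + \sigma^0(\sigma^--\sigma^+)]\,w_2(\gamma)\,ds$ and $\tfrac12\int_U (\nabla_f f)^2\partial_z w_2\,d\mu = \tfrac12\int_I[(\sigma^-)^2-(\sigma^+)^2]\,w_2(\gamma)\,ds$; their sum is exactly $\tfrac12\int_I w_2\delta\,ds$ after expanding $\delta=(\sigma^+)^2-(\sigma^-)^2-2\sigma^0(\sigma^+-\sigma^-)$. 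The delicate point is to control the $O(s^2)$ corrections in $|J^\pm|$ and in the expansions of $f$, $w_2$ along characteristics when $\sigma^\pm$ and $\sigma^0$ vary along $\gamma$; I expect these to contribute only subleading terms, which one can manage by localizing $w_2$ near a single point of $\gamma$ and rescaling.

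The corollary on contact harmonic graphs is then immediate: $A_2=0$ for all $w_2\in C_c^\infty(U)$ forces $\delta\equiv 0$ on $I$, and the constraint $\sigma^+<\sigma^0<\sigma^-$ turns $(\sigma^+-\sigma^0)^2=(\sigma^0-\sigma^-)^2$ into $\sigma^0=\tfrac12(\sigma^++\sigma^-)$.
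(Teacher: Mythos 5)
Your opening steps are solid and give a genuinely cleaner decomposition than the paper's. The observation that $A_1=0$ because $W=w_2Z$ is vertical (so $w_1\equiv 0$) is correct, and so is the first application of Corollary~\ref{cor:integration by parts} on each of $U^\pm$ with $g=\nabla_f w_2$, $h=\nabla_f f$: since $\alpha'(t)=(1,-f(\gamma(t)))$ along $\gamma$ by horizontality, the integrand $(fgh,gh)\cdot d\alpha$ vanishes identically there, and $g=\nabla_f w_2$ vanishes on the outer boundary. The resulting reduction to the single identity
\[
-\int_U \partial_z f\cdot \nabla_f G\,d\mu + \tfrac12\int_U (\nabla_f f)^2 \partial_z w_2\,d\mu = \tfrac12\int_I w_2(\gamma(s))\,\delta(s)\,ds
\]
with $G=w_2\nabla_f f$ is algebraically correct, and your arithmetic that the claimed boundary terms sum to $\tfrac12\delta$ checks out.

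The gap is in the claim that each of the two integrals separately collapses to a boundary evaluation. That is false in general. Passing to characteristic coordinates and noting that $\partial_z f\cdot|J^\pm|$ extends continuously to $s=0$ only tells you the $s$--integral \emph{converges}; the integration by parts in $s$ that produces the boundary term at $s=0$ also produces a bulk term $\int\! G\cdot\partial_s\bigl[\partial_z f\cdot|J^\pm|\bigr]$, and this does not vanish unless $\nabla_f^2 f\equiv 0$ and $\partial_z[\nabla_f f]\equiv 0$. Likewise $\tfrac12\int(\nabla_f f)^2\partial_z w_2$ equals the boundary term $\tfrac12\int_I w_2(\gamma)[(\sigma^-)^2-(\sigma^+)^2]\,ds$ \emph{minus} the bulk term $\int_U w_2\,\nabla_f f\cdot\partial_z[\nabla_f f]\,d\mu$. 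What is true — and what the proof actually hinges on — is that these two bulk terms cancel. The mechanism for that cancellation is the commutation identity $[\nabla_f,\partial_z]=\partial_z f\cdot\partial_z$, equivalently $\nabla_f[\partial_z f]-(\partial_z f)^2=\partial_z[\nabla_f f]$: when you integrate $-\int\partial_z f\cdot\nabla_f G$ by parts (taking $g=G$, $h=\partial_z f$), the divergent term $\int G(\partial_z f)^2$ cancels exactly against the $(\partial_z f)^2$ piece of $\nabla_f[\partial_z f]\cdot G$, leaving $+\int G\,\partial_z[\nabla_f f]\,d\mu$, which then cancels against the bulk term from $Q_2$. You identified the divergence of the naive second IBP, but the remedy is precisely to perform that IBP anyway after exhausting by compact sets away from $\gamma$ and to observe the cancellation of the non-integrable pieces — not to avoid it. Finally, the appeal to "localizing $w_2$ near a point and rescaling" cannot recover the non-boundary piece of the target formula (the bulk integral $\int_U (w_2\partial_z f+\nabla_f w_2)\nabla_f^2 f\,d\mu$), which genuinely depends on the interior behavior of $\Gamma$ and is not determined by the blow-up at a single point. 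The paper's exhaustion by the domains $T_i$ bounded by $Z^{1/i}\gamma$, together with the commutator identity, is the ingredient you are missing.
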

The condition on the slope of $\pi(C)$ is analogous to the condition in \cite{ChengHwangYang} that a singular curve in an $H$--minimal $Z$--graph bisects the foliation lines on either side. 



Before we prove Theorem~\ref{thm:fvf herringbone}, we examine the behavior of $\Gamma$ near $C$.  
\begin{lemma}\label{lem:near sing}
  With notation as in Theorem~\ref{thm:fvf herringbone}, for all $s\in I$, $\sigma^+(s)<\sigma^0(s)<\sigma^-(s)$.  Let $\delta=1$ if $C$ is right-pointing and $\delta=-1$ if $C$ is left-pointing. Let $J\subset I$ be a compact interval. Then for all $s\in J$ and all sufficiently small $\nu>0$,
  \begin{equation}\label{eq:singularity f plus}
    f(\gamma(s)Z^{\nu})-f(\gamma(s)) = 
    \delta \sqrt{2 \left(\sigma^0(s)- \sigma^+(s)\right) \nu} + O(\nu)
  \end{equation}
  \begin{equation}\label{eq:singularity f minus}
    f(\gamma(s)Z^{-\nu})-f(\gamma(s)) = 
    -\delta \sqrt{2 \left(\sigma^-(s)- \sigma^0(s)\right) \nu} + O(\nu)
  \end{equation}
  \begin{equation}\label{eq:partial f plus}
    \partial_z f(\gamma(s)Z^{\nu}) = 
    \delta \sqrt{\frac{\sigma^0(s) - \sigma^+(s)}{2 \nu}} + O(1)
  \end{equation}
  \begin{equation}\label{eq:partial f minus}
    \partial_z f(\gamma(s)Z^{-\nu}) = 
    \delta \sqrt{\frac{\sigma^-(s) - \sigma^0(s)}{2 \nu}} + O(1),
  \end{equation}
  where the implicit constants depend on $f$ and $J$. In particular, $\partial_zf$ is $L_1$ on a neighborhood of $\gamma(J)$.
\end{lemma}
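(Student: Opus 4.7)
The plan is to exploit the smooth horizontal foliation of $\Gamma^\pm$ near $C$ by parametrizing it explicitly in terms of a foot point on $C$ and the $x$-parameter along the ray, then invert that parametrization via a square-root expansion.

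For $s'$ in the domain of $\gamma$, let $\alpha_{s'}(\tau) = c(s')\cdot(\tau,\sigma^+(s')\tau,0)$ be the horizontal line through $c(s')$ with slope $\sigma^+(s')$. The horizontality of $c$ gives $c_z'(s') = (s'\sigma^0(s')-c_y(s'))/2$, hence $\gamma_z' = -c_y$ and $\gamma_z'' = -\sigma^0$. Using these together with the definition of $\Pi$, a direct computation yields
\[
  \Pi(\alpha_{s'}(\tau)) = \bigl(s'+\tau,\,0,\,\gamma_z(s') - c_y(s')\tau - \tfrac12\sigma^+(s')\tau^2\bigr).
\]
Setting $\Pi(\alpha_{s'}(\tau)) = \gamma(s)Z^\nu$, writing $\tau = s-s'$, and Taylor-expanding $\gamma_z(s-\tau)$ and $c_y(s-\tau)$ around $s$, the first-order-in-$\tau$ terms cancel and one obtains
\[
  \nu = \tfrac12\bigl(\sigma^0(s)-\sigma^+(s)\bigr)\tau^2 + O(\tau^3),
\]
with implicit constants uniform on the compact subinterval $J$. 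Since $\nu>0$ is attainable for arbitrarily small $\tau$, this forces $\sigma^+(s)<\sigma^0(s)$; the symmetric argument on $\Gamma^-$ gives $\sigma^0(s)<\sigma^-(s)$.

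Inverting the previous relation (by the implicit function theorem applied to the nondegenerate smooth map $(s',\tau^2)\mapsto(s,\nu)$, equivalently by square-root expansion) yields $|\tau|=\sqrt{2\nu/(\sigma^0(s)-\sigma^+(s))}+O(\nu)$. The sign of $\tau$ is determined by the orientation of $C$: comparing $c_y(s')+\sigma^+(s')\tau$ with $c_y(s)$ shows that in the right-pointing case (where $M^+$ lies above $\pi(C)$) one must have $\tau<0$, and in the left-pointing case $\tau>0$. Substituting into $f(\gamma(s)Z^\nu)=y(\alpha_{s'}(\tau))=c_y(s')+\sigma^+(s')\tau$ and expanding $c_y(s')=c_y(s)-\sigma^0(s)\tau+O(\tau^2)$, and using $f(\gamma(s))=c_y(s)$, gives
\[
  f(\gamma(s)Z^\nu)-f(\gamma(s)) = -(\sigma^0(s)-\sigma^+(s))\tau + O(\nu) = \delta\sqrt{2(\sigma^0(s)-\sigma^+(s))\nu}+O(\nu),
\]
which is \eqref{eq:singularity f plus}. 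The same argument applied to $\Gamma^-$ via the slope $\sigma^-$ yields \eqref{eq:singularity f minus}, with the sign flipping to $-\delta$ because $M^-$ lies on the opposite side of $\pi(C)$ from $M^+$.

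For \eqref{eq:partial f plus}--\eqref{eq:partial f minus}, note that away from $\nu=0$ the implicit function theorem makes $\tau$, and hence $s'$, smooth functions of $(s,\nu)$. Implicit differentiation of $\nu=\tfrac12(\sigma^0-\sigma^+)\tau^2+O(\tau^3)$ in $\nu$ at fixed $s$ gives $\partial_\nu\tau = 1/((\sigma^0(s)-\sigma^+(s))\tau)+O(1)$; combined with $\partial_\nu f=\sigma^+(s')\partial_\nu\tau + O(1)$ and the asymptotic for $|\tau|$, this produces the claimed $\delta\sqrt{(\sigma^0-\sigma^+)/(2\nu)}$ blowup. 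The $L_1$ bound near $\gamma(J)$ is then immediate from integrability of $\nu^{-1/2}$. I expect the principal bookkeeping difficulty to be keeping the sign $\delta$ consistent across the four cases (two orientations of $C$ times the two sides $\nu\gtrless 0$), which reduces to checking which sign of $\tau$ places the ray in $\Gamma^\pm$ -- a matching done once via the $M^\pm$ convention and then propagated through the $y$-coordinate formula.
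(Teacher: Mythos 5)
Your overall strategy is the same as the paper's: parametrize the horizontal foliation near the characteristic curve $C$, compute the $\Pi$-projection, Taylor-expand to second order in the foliation parameter, and invert to express that parameter in terms of $\nu$. The paper does this via a function $\zeta(s,t) = z\circ\Pi\circ\Lambda^+$ and its second-order jet at $(s,0)$; you do it in coordinates. These are the same argument with different notation.

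Two things to flag. First, a point you gloss over: $\alpha_{s'}(\tau)=c(s')\cdot(\tau,\sigma^+(s')\tau,0)$ is only the \emph{tangent line} to the leaf of $F^+$ through $\pi(c(s'))$; Definition~\ref{def:smooth herringbone} only requires $\Gamma^\pm$ to be foliated by horizontal \emph{curves}, which need not be straight. You should note that if $\lambda^+_{s'}$ denotes the actual horizontal leaf through $c(s')$ parametrized with unit $x$-speed, then \eqref{eq:horiz proj} gives $z(\Pi(\lambda^+_{s'}(\tau)))=\gamma_z(s')-\int_0^\tau y(\lambda^+_{s'}(u))\,\mathrm{d}u = \gamma_z(s')-c_y(s')\tau-\tfrac12\sigma^+(s')\tau^2+O(\tau^3)$, which agrees with your computation up to $O(\tau^3)$, and $y(\lambda^+_{s'}(\tau))=c_y(s')+\sigma^+(s')\tau+O(\tau^2)$. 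Since $\tau^3=O(\nu^{3/2})=O(\nu)$ and $\tau^2=O(\nu)$, the straight-line substitution introduces only errors of the advertised order, so the argument survives -- but this step deserves a sentence, since otherwise you are implicitly assuming the leaves are lines.

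Second, there is a genuine computational error in your treatment of \eqref{eq:partial f plus}--\eqref{eq:partial f minus}. You claim $\partial_\nu f=\sigma^+(s')\,\partial_\nu\tau + O(1)$. But $f(\gamma(s)Z^\nu)=c_y(s')+\sigma^+(s')\tau$ with $s'=s-\tau$, so the foot point $s'$ also moves with $\nu$; differentiating correctly gives
\begin{equation*}
\partial_\nu f = \bigl(-c_y'(s-\tau)+\sigma^+(s-\tau)-(\sigma^+)'(s-\tau)\tau\bigr)\,\partial_\nu\tau
= \bigl(\sigma^+(s)-\sigma^0(s)+O(\tau)\bigr)\,\partial_\nu\tau.
\end{equation*}
The $c_y'(s')$ term contributes $-\sigma^0(s)\,\partial_\nu\tau$, which is $O(\nu^{-1/2})$, not $O(1)$, so it cannot be absorbed into your error term. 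Your formula would instead yield $-\sigma^+(s)/\sqrt{2\nu(\sigma^0(s)-\sigma^+(s))}+O(1)$, which disagrees with the correct $\sqrt{(\sigma^0(s)-\sigma^+(s))/(2\nu)}+O(1)$ unless $\sigma^0(s)=0$. With the corrected coefficient one gets $\partial_\nu f = -(\sigma^0(s)-\sigma^+(s))\cdot\bigl((\sigma^0(s)-\sigma^+(s))\tau\bigr)^{-1}+O(1) = -1/\tau + O(1)$, which does produce the claimed asymptotic since $\tau<0$ in the right-pointing case. So the final statement of \eqref{eq:partial f plus} in your proof is correct, but the intermediate step as written does not derive it; you should repair the chain-rule computation so that the $c_y'(s')\partial_\nu s'$ term is accounted for.
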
  
\begin{proof}
  Let $J'\Subset I$ be an interval such that $J\Subset J'$. Let $\epsilon>0$ be sufficiently small that for any $s\in J'$, there are unique unit $x$--speed horizontal curves $\lambda^\pm_s\from (-\epsilon,\epsilon)\to \HH$ such that $\lambda^\pm_s(0)=c(s)$ and the projection $\pi\circ \lambda^+_s$ (resp.\ $\pi\circ \lambda^-_s$) is a leaf of $F^+$ (resp.\ $F^-$), and let $\Lambda^\pm(s,t)=\lambda^\pm_s(t)$.
  Then $\Lambda^+$ is a smooth map defined on $D=J' \times(-\epsilon,\epsilon)$, and its image contains a neighborhood of $c(J)$ in $\Gamma^+$. 
  Let $\zeta=z\circ \Pi \circ \Lambda^+$ and let $s\in J$. Since $\gamma(s)=\Pi(c(s))=\Pi(\Lambda^+(s,0))$, we have $\zeta(s,0)=\gamma_z(s)$.

  We expand $\zeta$ around $(s,0)$.
  Since $s\mapsto \Lambda^+(s,0)=c(s)$ is a smooth, unit $x$--speed horizontal curve, \eqref{eq:horiz proj} implies $\partial_s\zeta(s,0)=-y(c(s))$ and
  $$\partial^2_{s}\zeta(s,0)=-(y\circ c)'(s)=-\sigma^0(s).$$
  Likewise, for any $s$, the map $t\mapsto \Lambda^+(s,t) = \lambda^\pm_s(t)$ is a smooth, unit $x$--speed horizontal curve, so 
  $\partial_t\zeta(s,t)=-y(\lambda_s^+(t))$, and
  $$\partial^2_t\zeta(s,t)=-(y\circ \lambda^+_s)'(t).$$
  Setting $t=0$, we get $\partial_t\zeta(s,0)=-y(c(s))$ and $\partial^2_t\zeta(s,0)=-\sigma^+(s)$, so $\partial_s \partial_t\zeta(s,0)=-(y\circ c)'(s)=-\sigma^0(s)$. Thus
  \begin{align}
\notag    \zeta(s-t,t) 
    &= \gamma_z(s) + y(c(s)) t - y(c(s)) t - \frac{\sigma^0(s)}{2} t^2 + \sigma^0(s) t^2 - \frac{\sigma^+(s)}{2} + O(t^3) \\
    \label{eq:zeta expand}
    &= \gamma_z(s) + \frac{\sigma^0(s) - \sigma^+(s)}{2} t^2 + O(t^3).
  \end{align}

  For every $s\in J'$ and $0<t<\epsilon$, either $\Lambda^+(s-t,t)$ or $\Lambda^+(s+t,-t)$ lies in $\Gamma^+$, so $\zeta(s-t,t) > \gamma_z(s)$ or $\zeta(s+t,-t) > \gamma_z(s)$. In either case, \eqref{eq:zeta expand} implies $\sigma^+(s)<\sigma^0(s)$. By a similar argument, $\sigma^0(s)<\sigma^-(s)$ for all $s\in J'$. Thus, if $C$ is right-pointing, then $M^+=\pi(N\cap \Gamma^+)$ is above $\pi(C)$, and the horizontal foliations of $\Gamma$ project to lines of the form $\ggg$, while if $C$ is left-pointing, they project to lines of the form $\lll$.

  For the rest of this proof, we suppose that $C$ is right-pointing. If $C$ is a left-pointing singularity, we can rotate $\Gamma$ by 180$^\circ$ around the $z$--axis to produce a new graph $\widehat{\Gamma}=s_{-1,-1,1}(\Gamma)$ with a right-pointing singularity. The slopes $\sigma^0$, $\sigma^+$, and $\sigma^-$ stay the same under this symmetry, but the sign of $f$ flips, so the left-pointing case follows from the right-pointing case. 

  Let $s\in J$. The smoothness of $\zeta$ and \eqref{eq:zeta expand} imply that for any sufficiently small $\nu>0$, there is a unique $t>0$ such that $\zeta(s+t,-t)-\gamma_z(s)=\nu$. Let $p=\Lambda^+(s+t,-t)\in \Gamma$. Since $C$ is right-pointing, $p\in \Gamma^+$. Then
  $$\Pi(p)=(s+t-t, \zeta(s+t,-t))=(s, \gamma_z(s) + \nu)=\gamma(s)Z^\nu,$$
  so
  \begin{equation}\label{eq:f and lambda}
    f\big(\gamma(s) Z^{\nu}\big)=y(p)=y(\Lambda^+(s+t,-t)).
  \end{equation}

  We thus consider the relationship between $\nu$ and $t$.  By \eqref{eq:zeta expand},
  \begin{equation}\label{eq:nu expand}
    \nu =\frac{\sigma^0(s) - \sigma^+(s)}{2} t^2 + O(t^3).
  \end{equation}
  For $s\in J$, $\sigma^0(s) - \sigma^+(s)$ is bounded away from zero and bounded by a function of the intrinsic Lipschitz constant of $f$, so $\nu\approx t^2$ and 
  $$t= \sqrt{\frac{2\nu}{\sigma^0(s) - \sigma^+(s)}+O(t^3)} = \sqrt{\frac{2\nu}{\sigma^0(s) - \sigma^+(s)}} + O\left(\frac{t^3}{\sqrt{\nu}}\right) = \sqrt{\frac{2\nu}{\sigma^0(s) - \sigma^+(s)}} + O(\nu).$$
  Since $\partial_s[y\circ \Lambda^+](s,0) = \sigma^0(s)$ and $\partial_t[y\circ \Lambda^+](s,0) = \sigma^+(s)$, 
  \begin{multline*}
    f(\gamma(s) Z^{\nu}) 
    = y(\Lambda^+(s+t,-t)) 
    = y(\Lambda^+(s,0)) + (\sigma^0(s)-\sigma^+(s))t + O(t^2)\\
    =f(\gamma(s)) + \sqrt{2\nu(\sigma^0(s)-\sigma^+(s))} + O(\nu),
  \end{multline*}
  with implicit constants depending on $f$ and $J$.
  This proves \eqref{eq:singularity f plus}.

  Differentiating \eqref{eq:nu expand} gives
  $$\frac{\ud \nu}{\ud t} = (\sigma^0(s) - \sigma^+(s)) t + O(t^2).$$
  Therefore, 
  \begin{align*}
    \frac{\ud}{\ud \nu} f(\gamma(s) Z^{\nu})
    &\stackrel{\eqref{eq:f and lambda}}{=} \frac{\ud t}{\ud \nu} \frac{\ud}{\ud t} y(\Lambda^+(s+t,-t))\\
    &= \left(\frac{1}{(\sigma^0(s) - \sigma^+(s)) t} + \frac{O(t^2)}{(\sigma^0(s) - \sigma^+(s))^2 t^2}\right)\left(\sigma^0(s)-\sigma^+(s)+O(t)\right)\\
    &= t^{-1} + O(1)\\
    &= \sqrt{\frac{\sigma^0(s) - \sigma^+(s)}{2\nu}} + O(1),
  \end{align*}
  proving \eqref{eq:partial f plus}. The argument for the case $\nu<0$ and the equations \eqref{eq:singularity f plus} and \eqref{eq:partial f plus} is symmetric.
\end{proof}

Now we prove Theorem~\ref{thm:fvf herringbone}.
\begin{proof}[{Proof of Theorem~\ref{thm:fvf herringbone}}]
  By Lemma~\ref{lem:near sing}, $f$ is smooth on $U^\pm=\Pi(\Gamma^\pm)$, but not on the curve $\gamma=\Pi(C)$ dividing $U^+$ and $U^-$. Indeed, $\partial_zf\to \infty$ near $\gamma$. Nevertheless, $\nabla_ff$ is equal to the slopes of the curves in $F^+$ and $F^-$, so $\nabla_ff$ is bounded but discontinuous near $\gamma$. Let $K\subset U$ be a closed set with piecewise smooth boundary such that $\supp w_2 \Subset K$.
  Let $K^\pm=U^\pm\cap K$. We define $\lambda^+\from \overline{K^+} \to \R$,
  $$\lambda^+(p)=\begin{cases} \nabla_f f(p) & p\in K^+\\
    \sigma^+(x(p)) & p\in \gamma
  \end{cases}$$
  and define $\lambda^-\from \overline{K^-} \to \R$ likewise.

  These maps are continuous away from $\gamma$. To show continuity on $\gamma$, let $N$ be a neighborhood of $C$ as in Definition~\ref{def:smooth herringbone} and let $s^+\from \pi(N\cap \Gamma) \to \R$ so that $s^+(b)$ is the slope of $F^+$ at $b$. Then for any $v=(x_v,z_v)\in \Pi(N\cap \Gamma^+)$, $\nabla_ff(v)$ is the slope of the horizontal curve through $\Psi_f(v)$, i.e.,
  \begin{equation}\label{eq:nablaff formula}
    \nabla_ff(v)=s^+(\pi(\Psi_f(v)))=s^+(x_v,f(v)).
  \end{equation}
  This is continuous in a neighborhood of $\gamma$, so $\lambda^+$ and $\lambda^-$ are continuous.

  Furthermore, we can bound $\nabla_f \lambda^\pm$ and $\partial_z \lambda^\pm$ near $\gamma$. The horizontal derivative $\nabla_f\lambda^\pm$ is the derivative of $s^\pm$ along a leaf of $F^\pm$, so $\nabla_f\lambda^\pm\in L_\infty(K^\pm)$. 
  By \eqref{eq:nablaff formula} and Lemma~\ref{lem:near sing},
  \begin{align}
\notag    \partial_z \lambda^\pm(x,0,z)
    &=\partial_z f(x,0, z) \cdot \partial_y s^+(x,f(x,0,z))\\
\label{eq:pz lambda l1}    &=\left(\pm \sqrt{\frac{\sigma^0(s) - \sigma^+(s)}{2 (z-\gamma_z(x))}} + O(1)\right) \cdot \partial_y s^+(x,f(x,z)).
  \end{align}
  Since $\partial_y s^+$ and $\sigma^0(s) - \sigma^+(s)$ are bounded, $\partial_z\lambda^\pm\in L_1(K^\pm)$. 

  Now we turn to $E_{U}(f_t)$. Since $\bar{\phi}_t(U)=U$, Theorem~\ref{thm:fvf domain smooth} implies that
  $$\big|E_{U}(f_t) - E_{U}(f) - A_2 t\big|=\big|E_{K}(f_t) - E_{K}(f) - A_2 t\big|\le c \left(E_{U}(f) +\mu(U)\right) t^2$$
  for some $c=c(\psi, U, \|f\|_{L_\infty(U)})$, where
  $$A_2 = \int_{K} -\nabla_f^2 w_2 \cdot \nabla_{f}f + \frac{1}{2}  (\nabla_ff)^2\cdot \partial_z w_2 \ud \mu(v).$$
  We decompose $A_2$ as $A_2=A_2^++A_2^-$, where 
  \begin{equation}\label{eq:weak harmonic domain}
    A^\pm_2  =\int_{K^\pm} -\nabla_f^2w_2 \cdot \lambda^\pm \ud \mu + \int_{K^\pm} \frac{1}{2}  (\lambda^\pm)^2\cdot \partial_z w_2 \ud \mu =: Q_1^\pm + Q_2^\pm,
  \end{equation}
  and we will compute the $Q_i^{\pm}$'s as in the compactly supported case of Theorem~\ref{thm:fvf domain smooth}, plus a boundary term arising from the singularity.

  Consider $Q^+_1=\int_{K^+} -\nabla_f^2w_2 \cdot \lambda^+ \ud \mu$.
  We have
  $$\nabla_f w_2 =\nabla_f[(x\circ W)\cdot f + z\circ W]=\nabla_f[x\circ W]\cdot f + (x\circ W)\cdot \lambda^+ + \nabla_f[z\circ W]$$
  on $K^+$, so $\nabla_f w_2$ can be extended continuously to $\overline{K^+}$.  Letting $g=\nabla_{f}w_2$ and $h=\lambda^+$, we know that $g, h, \nabla_fg, \nabla_fh\in L_\infty(K^+)$.  By Lemma~\ref{lem:near sing} and \eqref{eq:pz lambda l1}, $\partial_zf,\partial_zh\in L_1(K^+)$. The smoothness of $w_2$ implies
  $$\partial_z g = \partial_z\partial_xw_2 - \partial_zf\cdot \partial_z w_2 - f\cdot \partial_z^2 w_2 \in L_1(K^+),$$
  so we can apply Corollary~\ref{cor:integration by parts} to show that
  \begin{align}\label{eq:q1 done}
    Q^+_1
    & = - \int_{K^+} \nabla_f w_2 \cdot \lambda^+ \cdot \partial_zf\ud\mu + \int_{K^+} \nabla_f \lambda^+ \cdot \nabla_fw_2 \ud \mu := Q^+_3 + Q^+_4.
  \end{align}

  Before we integrate by parts again, we restrict to a domain that avoids $\gamma$. Let $T_i\subset K^+$ be the closed subset of $K^+$ bounded by $Z^{\frac{1}{i}}\gamma$.  This has piecewise smooth boundary and $f$ is smooth on $T_i$. 
  Then $\nabla_f w_2, \lambda^+\in L_\infty(K^+)$, and $\partial_zf\in L_1(K)$ by Lemma~\ref{lem:near sing}, so
  $$Q^+_3 = \lim_{i\to \infty} - \int_{T_i} \nabla_f w_2 \cdot \lambda^+ \cdot \partial_zf\ud\mu.$$
  Let $\alpha_i$ parametrize $\partial T_i$ in the positive direction.  By Corollary~\ref{cor:integration by parts} with $g=w_2$, $h=\lambda^+ \cdot \partial_zf$, 
  \begin{multline}
\label{eq:Ti int}    - \int_{T_i} \nabla_f w_2
     \cdot \lambda^+ \cdot \partial_zf\ud\mu 
     = \int_{T_i}- w_2 \cdot \lambda^+ \cdot (\partial_zf)^2 + w_2 \cdot \nabla_f\left[\lambda^+ \cdot \partial_zf\right] \ud \mu+P_i\\
     = \int_{T_i} w_2 \cdot \lambda^+ \cdot \left(-(\partial_zf)^2 + \nabla_f[\partial_zf]\right)+w_2\cdot \partial_zf \cdot \nabla_f\lambda^+ \ud \mu+P_i
  \end{multline}
  where $P_i$ is the line integral $P_i=\int_{\partial T_i} -w_2 \cdot \lambda^+ \cdot \partial_zf \cdot (fX + Z)\cdot \ud \alpha_i$. As in the proof of Theorem~\ref{thm:fvf domain smooth}, $-(\partial_zf)^2 + \nabla_f[\partial_zf]= \partial_z\lambda^+$, so
  $$Q^+_3
  = \lim_{i\to \infty} \int_{T_i} w_2 \cdot \lambda^+ \cdot \partial_z \lambda^++ w_2\cdot \partial_zf \cdot \nabla_f\lambda^+ \ud \mu+P_i.$$
  This integrand is $L_1$, so in fact,
  \begin{equation}\label{eq:inner integral 2}
    Q^+_3 = \int_{K^+} w_2 \cdot \lambda^+ \cdot \partial_z \lambda^++ w_2\cdot \partial_zf \cdot \nabla_f\lambda^+ \ud \mu + \lim_{i\to \infty} P_i.
  \end{equation}
  
  Now we consider $\lim_{i\to \infty} P_i$. Let $\epsilon=i^{-1}$ and let $I$ be the domain of $\gamma$.  Since $w_2$ vanishes on $\partial T_i\setminus Z^{\epsilon}\gamma(I)$, it suffices to integrate $P_i$ over $Z^{\epsilon}\gamma(I)$.  Let $p=\gamma(t)$ and $q=\gamma(t) Z^{\epsilon}$. Since $\gamma$ is characteristic, $\gamma'(t) = X - f(p)Z$, so 
  \begin{align*}
    P_i
    &=\int_{I} - w_2(q) \cdot \lambda^+(q) \cdot \partial_zf(q)\cdot  (f(q) X + Z)\cdot \gamma'(t) \ud t\\
    &=\int_{I} - w_2(q) \cdot \lambda^+(q) \cdot \partial_zf(q)\cdot  (f(q) - f(p)) \ud t \\
    &= \int_{I} - w_2(q) \cdot \lambda^+(q) \cdot \left(\sigma^0(t)- \sigma^+(t)+O(\epsilon)\right) \ud t,
  \end{align*}
  using Lemma~\ref{lem:near sing} in the last step. Since $\lambda^+(q)=\sigma^+(t)$,
  \begin{equation}\label{eq:Pi done}
    \lim_i P_i= \int_{I} w_2(\gamma(t)) \cdot \sigma^+(t) \left(\sigma^+(t)- \sigma^0(t)\right) \ud t.
  \end{equation}

  Finally, $\partial_z\lambda^+\in L_1(K^+)$, so integrating $Q_2^+$ by parts gives
  $$Q^+_2=\int_{K^+}\frac{1}{2}  (\lambda^+)^2\cdot \partial_z w_2 \ud \mu
  = \int_{K^+} \frac{1}{2}\partial_z\left[w_2\cdot (\lambda^+)^2\right] \ud \mu - \int_{K^+} w_2\cdot \lambda^+ \cdot \partial_z\lambda^+ \ud \mu.$$
  Since $w_2$ vanishes on all of $\partial T_i$ except for its lower boundary $\gamma(I)$,
  \begin{align*}
    Q^+_2 
    &= -\int_{I} \frac{1}{2}  w_2(\gamma(s)) \cdot \lambda^+(\gamma(s))^2 \ud s - \int_{K^+} w_2\cdot \lambda^+ \cdot \partial_z\lambda^+ \ud \mu\\
    &= -\int_{I} \frac{1}{2}  w_2(\gamma(s)) \cdot \sigma^+(s)^2 \ud s - \int_{K^+} w_2\cdot \lambda^+ \cdot \partial_z\lambda^+ \ud \mu.
  \end{align*}

  Combining this computation with \eqref{eq:q1 done}, \eqref{eq:inner integral 2}, and \eqref{eq:Pi done} and canceling like terms, we find
  \begin{align*}
    A^+_2 & = Q^+_3+Q^+_4+Q^+_2 \\
        &= \int_{I} w_2(\gamma(s)) \cdot \left(\frac{1}{2} \sigma^+(s)^2 - \sigma^+(s)\sigma^0(s)\right) \ud s
          + \int_{K^+} (w_2\cdot \partial_zf + \nabla_fw_2)\cdot \nabla_f^2f \ud \mu.
  \end{align*}
  A similar calculation for $A^-_2$
  gives
  $$A^-_2 = \int_{I} w_2(\gamma(s)) \cdot \left(\frac{1}{2} \sigma^-(s)^2 - \sigma^-(s)\sigma^0(s)\right) \ud s
  + \int_{K^-} (w_2\cdot \partial_zf + \nabla_fw_2)\cdot \nabla_f^2f \ud \mu$$
  and
  $$A_2=A^+_2 + A^-_2 = \frac{1}{2} \int_{I} w_2(\gamma(s)) \cdot \delta(s)\ud s
  + \int_{K} (w_2\cdot \partial_zf + \nabla_fw_2)\cdot \nabla_f^2f \ud \mu,$$
  where $\delta(s)=(\sigma^+(s)-\sigma^0(s))^2 - (\sigma^0(s)-\sigma^-(s))^2$.
\end{proof}

\bibliographystyle{alphaurl}
\bibliography{hGraphs}
\end{document}